\renewcommand{\epsilon}{\varepsilon}
\newcommand{\p}{\ensuremath{\partial}}
\newcommand{\mc}{\ensuremath{\mathcal}}
\definecolor{labelkey}{rgb}{0,0,1}
\def\les{\lesssim}
\def\eps{\varepsilon}
\renewcommand*{\div}{\ensuremath{\mathrm{div\,}}}
\newcommand{\R}{\mathbb{R}}
\newcommand{\Z}{\mathbb{Z}}
\renewcommand*{\tilde}{\widetilde}
\renewcommand*{\hat}{\widehat}
\renewcommand*{\bar}{\overline}
\newcommand{\T}{{\mathbb T}}
\newtheorem{theorem}{Theorem}[section]
\newtheorem{lemma}[theorem]{Lemma}
\newtheorem{prop}[theorem]{Proposition}
\newtheorem{proposition}[theorem]{Proposition}
\theoremstyle{definition}
\newtheorem{remark}[theorem]{Remark}
\numberwithin{equation}{section}
\def\p{\partial}
\def\f1r{{\frac{1}{r}}  }
\def\p{\partial}
\def\f1r{{\frac{1}{r}}  }
\title{
Non-radial implosion for the defocusing nonlinear Schr\"odinger equation in $\mathbb{T}^d$ and $\mathbb{R}^d$}
 \author{G. Cao-Labora, J. G\'omez-Serrano, J. Shi, G. Staffilani}
\date{} %
\begin{document}

\maketitle
 \begin{abstract}
In this paper we construct smooth, non-radial solutions of the defocusing nonlinear Schr\"odinger equation that develop an imploding finite time singularity, both in the periodic setting and the full space. 
 \end{abstract}
\tableofcontents

\section{Introduction}

\subsection{Historical Background}

We consider the  defocusing nonlinear Schr\"odinger (NLS) equation  given by
\begin{equation} \label{eq:NLS}
i\p_t v + \Delta v - v|v|^{p-1} = 0
\end{equation}
for some odd integer $p$. Here  $v$ is a complex valued function. Other than the  mass, which is the  $L^2$ norm square of $v$, the Hamiltonian given by
\begin{equation*}
E(v) = \frac{1}{2} \int_{\mathbb R^d} | \nabla v |^2 + \frac{1}{p+1} \int_{\mathbb R^d} |v|^{p+1}
\end{equation*}
is also conserved. 
The natural scaling of this equation is given by $v_\lambda (x, t) = \lambda^{2/(p+1)} v(\lambda x, \lambda^2 t)$, and if $v$ is a solution, then $v_\lambda $  is also a solution for any $\lambda > 0$. Note that if we consider the periodic setting, i.e. the space variable $x$ belongs to a certain torus $\T^d$, then  the torus needs to be rescaled appropriately. The critical exponent $s_c$ for which the homogeneous Sobolev norm  $\dot H^{s_c}(\R^d)$  remains invariant under scaling is given by $s_c = \frac{d}{2} - \frac{2}{p-1}$. The goal of this paper is to show that even in the defocusing case,  we can exhibit non-radial or periodic  blowup  for the initial value problem associated to equation \eqref{eq:NLS} in the energy supercritical regime, namely when the initial data is in $H^s$ and   $s>s_c > 1$. 

Before stating precisely the main results, we will touch upon the history of well-posedness and blow up for the initial value problem  \eqref{eq:NLS} both in $\R^d$ and $\T^d$. At this point the literature is vast hence here we will recall only the few works that are more strictly related to the main results we present below.
Let us first review the results in $\R^d$. In the subcritical case  $s>s_c$, local well-posedness was originally proved by Ginibre and Velo \cite{Ginibre-Velo:class-nls-i-cauchy} using Strichartz estimates, see also \cite{Cazenave-Weissler:cauchy-problem-nls-hs, Cazenave:semilinear-nls-book}. Since in this case the proof shows that the time of existence depends on the $H^s$ norm of the initial data, for $s\leq0$,  using  the mass conservation, or for $s\leq 1$, using  conservation of the Hamiltonian, 
by iteration global well-posedness in $L^2$ or $H^1$ follows easily if the data are respectively in those spaces. On the other hand using the $I-$method, which is based on the concept of {\it almost} conserved energies,  in certain cases it is possible to extend the local well-posedness to global also below the conserved integrals, see for example \cite{Colliander-Keel-Staffilani-Takaoka-Tao:almost-conservation-global-rough-solutions-nls}. If $s=s_c$ local well-posedness is still available via Strichartz estimates using the same techniques in the works mentioned above, but in this case the local time of existence depends also on the profile of the initial data and in order to obtain global well-posedness one needs to treat the problem not as a perturbation of the linear one, but as a truly nonlinear system. The first results in this direction were obtained for the energy critical case ($s_c=1$), in the 3D  radial case \cite{Bourgain:gwp-defocusing-critical-nls-radial, Grillakis:nls}. The radial assumption was removed in \cite{Colliander-Keel-Staffilani-Takaoka-Tao:global-wellposedness-energy-critical-nls-R3} and in higher dimensions the problem was solved in \cite{Ryckman-Visan:gwp-scattering-nls-r14, Visan:defocusing-nls-high-dimensions}.
For the mass critical  problem ($s_c=0$),  global well-posedness in the radial case was proved in \cite{Killip-Tao-Visan:cubic-nls-2d-radial,Tao-Visan-Zhang:gwp-scattering-nls-radial-high-dimensions} , and the radial assumption was removed in \cite{Dodson:gwp-scattering-defocusing-nls-d-geq-3, Dodson:gwp-scattering-defocusing-nls-d-eq-1, Dodson:gwp-scattering-defocusing-nls-d-eq-2}. When $s_c\not \in \{0, 1\}$  the problem is more challenging since a priori the $H^{s_c} $ norm of the solution is no longer bounded. First in  \cite{Kenig-Merle:scattering-h12-cubic-defocusing-nls} in 3D and later in \cite{Murphy:defocusing-h12-nls-high-dimensions, Yue:gwp-focusing-nls-t4} in all dimensions, the authors proved that if one assumes the uniform boundedness of the $H^{s_c}$ norm of the solutions then global well-posedness follows. Recently in \cite{Dodson:scattering-defocusing-nls-critical} it was shown that   global well-posedness and scattering  is  available without the a-priori bound in $H^{s_c}$. However the  data belongs not to $H^{s_c}$  but to a certain Besov space that scales like  $H^{s_c} $.  

We now address the global well-posedness in supercritical spaces, namely when $s<s_c$. Before the work of \cite{Merle-Raphael-Rodnianski-Szeftel:implosion-nls} there was strong belief that also in the energy supercritical regime, $s_c>1$,  the a priori boundedness of the $H^{s_c}$ norm should result in global well-posedness. This in fact was conjectured in \cite{Bourgain:problems-hamiltonian-pde}, see also \cite{Kenig-Merle:nondispersive-radial-nlw, Killip-Visan:energy-supercritical-nls}. Numerically such a conjecture was also supported in \cite{Colliander-Simpson-Sulem:numerical-supercritical-nls}. In contrast, in \cite{Tao:blowup-defocusing-nlw,Tao:blowup-supercritical-defocusing-nls} models of equation  very close to NLS or wave equations at supercritical regimes were proved to exhibit blow-up, while other initial value problem of NLS or wave type with only a logarithmic supercriticality were proved to have global large solutions \cite{Struwe:gwp-cauchy-supercritical-nlw-2-dim, Krieger-Schlag:large-global-supercritical-nlw-r3+1, Colombo-Haffter:global-regularity-nlw-slightly-supercritical}.  Finally, results of  norm inflation were obtained for very low supercritical regularity such as in \cite{Christ-Colliander-Tao:asymptotics-modulation-low-regularity-illposedness-defocusing, Thomann:instabilities-supercritical-schrodinger-manifolds}.

We now consider well-posedness in the periodic setting. Before the proof of Strichartz estimates by Bourgain in \cite{Bourgain:fourier-transform-restriction-i-nls} local well-posedness was only known for smooth data, i.e. $s>d/2$, \cite{Kato:cauchy-problem-gkdv,Tsutsumi:weighted-sobolev-spaces-nonlinear-dispersive-wave}. Strichartz estimates, first partially proved for rational tori \cite{Bourgain:fourier-transform-restriction-i-nls} and later for all tori \cite{Bourgain-Demeter:proof-decoupling},  allowed for local well-posedness  to be proved for subcritical cases $s>s_c$. Also in this case global well-posedness follows if $s$ is below the mass or energy regularity by iteration as in $\R^d$. In some cases the $I-$method is still applicable \cite{DeSilva-Pavlovic-Staffilani-Tzirakis:gwp-periodic-nls, Schippa:improved-gwp-mass-critical-nls-tori} and if the torus is irrational some number theoretical observation can be used to attain global well-posedness for the same range of exponents that gives  local well-posedness \cite{Herr-Kwak:strichartz-estimates-gwp-cubic-nls-t2}.  Unfortunately at the critical level $s=s_c$ the Strichartz estimates have a derivative $\log$ loss and even local well-posedness is distinctly harder to obtain. One such example is the energy critical problem in 3D (hence quintic nonlinearity) proved to be first locally well-posed in \cite{Herr-Tataru-Tzvetkov:gwp-critical-nls-small-data-h1-t3} and then globally well-posed in \cite{Ionescu-Pausader:energy-critical-defocusing-nls-t3}. When the critical exponent $s_c\not \in \{0,1\}$ a conditional result similar to that of \cite{Kenig-Merle:nondispersive-radial-nlw, Killip-Visan:energy-supercritical-nls} has been proved in 2D for $s_c=1/2$  in \cite{Yu-Yue:gwp-periodic-quintic-nls}. 

To the best of our knowledge, before our work  there were  no results or conjecture concerning blow up for periodic supercritical defocusing initial  value problems like \eqref{eq:NLS}. In fact even in the focusing case the results are very few. We recall \cite{Ogawa-Tsutsumi:blowup-nls-quartic-potential-periodic} in 1D, see also \cite{Oh:blowup-periodic-nls}, the numerical study in \cite{Sulem-Sulem-Frisch:tracing-complex-singularities}, and \cite{Planchon-Raphael:existence-stability-loglog-critical-nls-domain} where the authors extend the  $\log – \log$ blow-up dynamics in \cite{Merle-Raphael:universality-blowup-l2-nls} to the  $L^2$-critical nonlinear Schr\"odinger equation in a domain.

In this paper we are interested in \textit{imploding} solutions. These are solutions for which the $L^\infty$ norm of the function blows up as $t$ approaches the singularity time. This is in contrast with \textit{shock} solutions, where the $L^\infty$ norm stays bounded and the gradient blows up, which are more ubiquitous in PDE with conservation laws such as compressible Euler or NLS. Specifically, Guderley \cite{Guderley:singularities-radial} was the first to construct radial, self-similar imploding singularities for compressible Euler, though non-smooth. See also further work by Jenssen--Tsikkou 
\cite{Jenssen-Tsikkou:amplitude-blowup-radial-isentropic-euler,Jenssen-Tsikkou:radially-symmetric-non-isentropic-euler}. 

Merle--Raphaël--Rodnianski--Szeftel \cite{Merle-Raphael-Rodnianski-Szeftel:implosion-i} made a breakthrough by constructing the first smooth, radially imploding solutions to the compressible Euler equations. They also extended this work to create imploding singularities for both the compressible Navier-Stokes equations (with decaying density at infinity) \cite{Merle-Raphael-Rodnianski-Szeftel:implosion-ii}, and the energy-supercritical defocusing nonlinear Schrödinger equation \cite{Merle-Raphael-Rodnianski-Szeftel:implosion-nls}. More specifically, they proved that there exists a sequence of  scalings $r_k$ and a sequence of smooth self-similar imploding profiles with such scalings. This was done for almost every value of $\gamma$ (the adiabatic exponent relating the pressure to the density). In \cite{Buckmaster-CaoLabora-GomezSerrano:implosion-compressible} (see also the review paper \cite{Buckmaster-CaoLabora-GomezSerrano:implosion-compressible-review}), the first two authors together with Buckmaster improved the result to cover all cases of $\gamma$. Moreover, they showed the existence of non-decaying imploding singularities for Navier-Stokes. One of the open questions that Merle--Rapha\"el--Rodnianski--Szeftel highlight is the construction of nonradial blow-up since their setting only allows for radial perturbations. In \cite{CaoLabora-GomezSerrano-Shi-Staffilani:nonradial-implosion-compressible-euler-ns-T3R3}, we solved this problem. The difficulty introduced by the nonradiality is deep since it essentially turns every ODE analysis into PDE analysis, which is a lot harder. After our work,
Chen--Cialdea--Shkoller--Vicol \cite{Chen-Cialdea-Shkoller-Vicol:vorticity-blowup-2d-compressible-euler} constructed solutions to the 2D compressible Euler equation for which vorticity exhibits finite time blow-up. Chen \cite{Chen:vorticity-blowup-compressible-euler-rd} extended this analysis to any $d \geq 2$. In the context of other equations, Shao--Wei--Zhang \cite{Shao-Wei-Zhang:blowup-defocusing-nlw,Shao-Wei-Zhang:self-similar-implosion-relativistic-euler} proved the existence of imploding solutions to the nonlinear wave and the relativistic Euler equations. We also point out to the numerical work of Biasi \cite{Biasi:self-similar-compressible-euler}.

\subsection{Main result} \label{sec:12}

In \cite{CaoLabora-GomezSerrano-Shi-Staffilani:nonradial-implosion-compressible-euler-ns-T3R3} we managed to answer a question of \cite{Merle-Raphael-Rodnianski-Szeftel:implosion-ii} concerning the construction of non-radial imploding solutions for compressible Euler and Navier-Stokes equations. Our goal in this paper is to extend the previous construction to the case of the NLS equation \eqref{eq:NLS}. Broadly speaking, we are able to construct finite time imploding singularities with either: 
\begin{enumerate}
    \item smooth periodic initial data (\textbf{Theorem \ref{th:periodic}}).
    \item smooth, non-radially symmetric initial data that does not vanish at infinity (\textbf{Theorem \ref{th:Euclidean}}). 
\end{enumerate}

The proof of Theorems \ref{th:periodic} and \ref{th:Euclidean} starts with the transformation of the defocusing NLS equation into the compressible Euler equation via the Madelung transform \eqref{decompositionofsol}. In order to perform the non-radial stability argument, we are inspired by our previous work \cite{CaoLabora-GomezSerrano-Shi-Staffilani:nonradial-implosion-compressible-euler-ns-T3R3}. We extend the new symmetry-breaking analysis that we developed there from 3D to higher dimensions and use it to perform the linearized non-radial stability estimate.  This also requires a new angular repulsitivity condition on the profile to close the energy estimate. 
As a byproduct, we improve and generalize the techniques used in \cite{Merle-Raphael-Rodnianski-Szeftel:implosion-nls}. The key part of the non-linear stability argument hinges on involving the logarithm of the density, which leads to a new simplified version of the highest-order energy estimate without derivative losses. This energy estimate is new, and is a combination of the compressible Euler energy and the Hamiltonian of the defocusing NLS. 
The result in the periodic setting is obtained by introducing an argument involving a refined cut-off.

\subsection{Setup of the problem}

The equation in hydrodynamical variables is obtained as follows. Let 
\begin{equation}\label{decompositionofsol}
v = \sqrt \rho e^{i\psi}.
\end{equation} From \eqref{eq:NLS}, we have that
\begin{align} 
\nonumber
i \frac{\p_t \rho}{2\sqrt \rho} e^{i\psi} - \p_t \psi \sqrt \rho e^{i\psi} &=
 \left( \sqrt \rho e^{i\psi} \right) \rho^{\frac{p-1}{2}}  - \Delta(\sqrt \rho) e^{i\psi} - \frac{\nabla \rho}{\sqrt \rho} i \nabla \psi e^{i \psi} - \sqrt \rho \div \left( i \nabla \psi e^{i\psi} \right) \\ 
&= 
\sqrt \rho  e^{i \psi} \left(
 \rho^{\frac{p-1}{2}}  - \frac{\Delta(\sqrt \rho)}{\sqrt \rho} - \frac{\nabla \rho}{\rho} i \nabla \psi  - i  \Delta \psi   + |\nabla \psi |^2   
\right).\label{eq:NLS_polar}
\end{align}

Now
\begin{equation*}
\Delta (\sqrt \rho) = \div \left( \frac{\nabla \rho}{2 \sqrt \rho }\right) = \frac{\Delta \rho}{2 \sqrt \rho} - \frac{1}{4} \frac{|\nabla \rho|^2}{\rho^{3/2}}.
\end{equation*}

We first divide \eqref{eq:NLS_polar} by $e^{i\psi}$. After that, since both $\rho$ and $\psi$ are real, we can identify real and imaginary parts, and obtain
\begin{align} \begin{split} \label{eq:NLS_polar2}
\p_t \psi &= 
-\rho^{\frac{p-1}{2}}  + \frac{1}{2\rho} \Delta \rho - \frac{1}{4} \frac{|\nabla \rho|^2}{\rho^2} - |\nabla \psi |^2  \\
\p_t \rho &= 2 \left(
 - \nabla \rho  \cdot \nabla \psi  -   \rho \Delta \psi  \right)
\end{split} \end{align}

In order to see the similarity with the compressible Euler equations, we define $u = \nabla \psi$ and observe that the system \eqref{eq:NLS_polar2} takes the form
\begin{align*}
\p_t u &= -\nabla \left( \rho^{\frac{p-1}{2}} \right) - 2 u \nabla u  + \nabla \left( \frac{ \Delta \rho }{2\rho}- \frac{1}{4} \frac{|\nabla \rho|^2}{\rho^2} \right)\\
\p_t \rho &= -2 u \nabla \rho - 2 \rho \div (u) = -2 \div (u \rho)
\end{align*}

We rescale time via $\p_{t'} = 2\p_t$. We have
\begin{align*}
 \p_{t'} u &= -\frac12 \nabla \left( \rho^{\frac{p-1}{2}} \right) - u \nabla u  + \nabla \left( \frac{ \Delta \rho }{4\rho}- \frac{1}{8} \frac{|\nabla \rho|^2}{\rho^2} \right)\\
\p_{t'} \rho &=  -\div (u \rho)
\end{align*}
which corresponds to the compressible Euler equations for $\gamma = \frac{p+1}{2}$ and forcing $\frac12 \nabla \frac{ \Delta \sqrt \rho }{\sqrt{\rho}}$ (the coefficient in front the pressure term $\nabla \rho^\gamma$ can also be adjusted by rescaling $\rho$ by a constant).

We let $\alpha = \frac{\gamma - 1}{2} = \frac{p-1}{4}$  and postulate the self-similar ansatz
\begin{align}\label{selfsimilarchange}
\psi = \frac{ (T-t)^{\frac{2}{r} - 1} }{r} \Psi \left( \frac{x}{(T-t)^{1/r}}, - \frac{\log (T-t)}{r} \right), \\\nonumber
\rho = \frac{ (T-t)^{\frac{1}{\alpha r} - \frac{1}{\alpha} } }{r} P \left( \frac{x}{(T-t)^{1/r}}, - \frac{\log (T-t)}{r} \right)
\end{align}
in the new variables
\begin{equation*}
s = -\frac{\log(T-t)}{r}, \qquad \mbox{ and } \qquad y = \frac{x}{(T-t)^{1/r}} = x e^s
\end{equation*}
We define accordingly
\begin{equation*}
s_0 = - \frac{\log T}{r},
\end{equation*}
which we will take to be sufficiently large and positive (that is, we will take $T > 0$ to be sufficiently small). Our new time variable will be $s\geq s_0$ and our new space variable is $y\in \mathbb{R}^{d}$ or $y \in e^{s}\mathbb{T}^{d}$, corresponding to the euclidean case or periodic case respectively.

From \eqref{eq:NLS_polar2}, \eqref{selfsimilarchange}, we obtain 
\begin{align*}
 -(2-r)\Psi + y \nabla \Psi + \p_s \Psi   &=
-r^{-2\alpha + 2}  P^{2\alpha}
- |\nabla \Psi|^2 + (T-t)^{2-\frac{4}{r}} \left( \frac{\Delta P}{2 P} -   \frac{|\nabla P|^2}{4P^2} \right) \\ 
 -\frac{1-r}{\alpha} P + y \nabla P + \p_s P  & =
-2  \nabla P \cdot \nabla \Psi
-2  P \Delta \Psi \\ 
\end{align*}

Therefore, we obtain
\begin{align} \begin{split} \label{eq:SSequation}
\p_s \Psi   &= (2-r) \Psi -y \nabla \Psi - |\nabla \Psi|^2 - r^{-2\alpha + 2}  P^{2\alpha}
 + e^{(4-2r)s} \left( \frac{\Delta P}{2 P} -   \frac{|\nabla P|^2}{4P^2} \right) \\ 
\p_s P  & = \frac{1-r}{\alpha} P - y\nabla P
-2  \nabla P \cdot \nabla \Psi
-2  P \Delta \Psi \\ 
\end{split} \end{align}

Defining 
\begin{equation}\label{rescaleden}
S =  \frac{r^{1-\alpha}}{\sqrt \alpha} P^\alpha
\end{equation}
and letting $U = \nabla \Psi$, we have
\begin{align} \begin{split} \label{eq:fulleq_SPsi}
\p_s \Psi &= -(r-2)\Psi - y \nabla \Psi - | \nabla \Psi |^2 - \alpha S^2  + e^{(4-2r)s} \frac{\Delta (S^{1/(2\alpha)})}{S^{1/(2\alpha)}} \\ 
\p_s S &= -(r-1)S - y \nabla S - 2 \nabla S \cdot U - 2 \alpha S \text{div} (U).
\end{split} \end{align}

 One can also write the equation in $U, S$ variables, namely 
\begin{align} \begin{split} \label{eq:fulleq_US}
\p_s U &= -(r-1)U - y \nabla U - 2 U \cdot \nabla U - 2 \alpha S \nabla S  + e^{(4-2r)s} \nabla\frac{\Delta (S^{1/(2\alpha)})}{S^{1/(2\alpha)}} \\ 
\p_s S &= -(r-1)S - y \nabla S - 2 \nabla S \cdot U - 2 \alpha S \text{div} (U).
\end{split} \end{align}
Notice that when $r>2$, \eqref{eq:fulleq_US} is a perturbation
of the compressible Euler equation in dimension $d$.
From \cite{Buckmaster-CaoLabora-GomezSerrano:implosion-compressible,Merle-Raphael-Rodnianski-Szeftel:implosion-i}, we have a self-similar profile neglecting the inhomogeneous part $e^{(4-2r)s} \frac{\Delta (S^{1/(2\alpha)})}{S^{1/(2\alpha)}}$ when $2\leq d \leq 9 $. 

That is, there exist $S_p$, $\Psi_p$, satisfying:
\begin{align} \begin{split} \label{eq:SSprofiles}
0   &= -(r-2) \Psi_p -y \nabla  \Psi_p - |\nabla  \Psi_p|^2 - \alpha S_p^2 \\ 
0  & = -(r-1)  S_p - y\nabla  S_p
-2 \nabla S_p \cdot \nabla \Psi_p
-2 \alpha  S_p \Delta \Psi_p
\end{split} 
\end{align}
Let us mention that here we have an extra coefficient $2$ in \eqref{eq:SSprofiles} compared to the equation in \cite{Buckmaster-CaoLabora-GomezSerrano:implosion-compressible,Merle-Raphael-Rodnianski-Szeftel:implosion-i}, which just corresponds with a rescaling of the profiles by a factor of $2$ that needs to be done in order to pass from compressible Euler to NLS. Thus, rigorously speaking, our profiles correspond to the ones from \cite{Buckmaster-CaoLabora-GomezSerrano:implosion-compressible,Merle-Raphael-Rodnianski-Szeftel:implosion-i} divided by $2$. Letting $\bar U_{p, R} = \p_R \Psi_p$, the constructions from \cite{Buckmaster-CaoLabora-GomezSerrano:implosion-compressible,Merle-Raphael-Rodnianski-Szeftel:implosion-i} guarantee the following properties:
\begin{align} 
 S_{p} &>0, \label{eq:profiles_positive} \\
 |\nabla^{j} \bar U_{p,R}| +  |\nabla^j S_{p} | &\les \langle R \rangle^{-(r-1)-j} , \quad \forall j \geq 0, \quad \mbox{ and } \quad S_{p} \gtrsim \langle R \rangle^{-r+1}. \label{eq:profiles_decay} \\
\partial_{R}S_{p}(0)& =0, \label{eq:regularityatorigin}\\
R + 2 \bar{U}_{p,R} - 2\alpha S_{p} & = 0 \text{ when } R=1. \label{critical point condition} \end{align}
Moreover, the profiles satisfy \textit{repulsivity properties}. That is, there exists $\tilde \eta > 0$ such that:
\begin{align}
1 + 2\p_R \bar U_{p,R} - 2\alpha |\p_R  S_{p} | &> \tilde{\eta}, \label{eq:radial_repulsivity}\\
1 +2\frac{\bar U_{p,R}}{R} - 2\alpha |\p_R  S_{p} | &> \tilde{\eta},\label{eq:angular_repulsivity}
\end{align}
The properties \eqref{eq:profiles_positive}-\eqref{critical point condition} follow from the construction of the profiles easily, and we refer to \cite{Merle-Raphael-Rodnianski-Szeftel:implosion-i, Buckmaster-CaoLabora-GomezSerrano:implosion-compressible} for their justifications. The property \eqref{eq:radial_repulsivity} is essential for the radial stability and also proved in those papers. The property \eqref{eq:angular_repulsivity} is needed for the non-radial stability and requires substantial amount of work. We prove it in Lemma \ref{lemma:angular_repulsivity} in the Appendix, for the choice $(d, p) = (8, 3)$. We remark that the constants $2$ disappear in properties \eqref{critical point condition}--\eqref{eq:angular_repulsivity} in the previous literature \cite{Merle-Raphael-Rodnianski-Szeftel:implosion-i, Buckmaster-CaoLabora-GomezSerrano:implosion-compressible, CaoLabora-GomezSerrano-Shi-Staffilani:nonradial-implosion-compressible-euler-ns-T3R3}, due to the fact that the profiles are multiplied by $2$ with respect to our current notation (in particular, those profiles satisfy a rescaled version of \eqref{eq:fulleq_US} without the factors $2$ in front of the nonlinear terms).

\begin{theorem} \label{th:periodic}  Let $\Psi_{p}, S_{p}$ be self-similar profiles solving \eqref{eq:SSprofiles} for $(d,p) = (8,3)$ and satisfying \eqref{eq:profiles_positive}--\eqref{critical point condition}, for some $r>2$. Let $T > 0$ sufficiently small. 

Then, there exists $C^\infty$ initial data $v_0$ for which equation \eqref{eq:NLS} on $\mathbb T^d$ blows up at time $T$ in a self-similar manner. More concretely, for any fixed $y\in \mathbb T^d$, $v(y,t)=\sqrt{\rho}(y,t)e^{i\Psi(y,t)}$ with $\rho\in \mathbb{R}^{+}$, $\Psi\in \mathbb{R}$, we have:
\begin{align*} 
\lim_{t\rightarrow T^-}r (T-t)^{1-\frac{2}{r}} \Psi\left((T-t)^{\frac1r}y,t\right) &= \Psi_{p} (|y|), \\
\lim_{t\rightarrow T^-} \left( \alpha^{-1 } r (T-t)^{1-\frac{1}{r}} \right)^{1/\alpha} \rho\left((T-t)^{\frac1r}y,t\right) &= S_{p}(|y|)^{1/\alpha} \,.
\end{align*}
Moreover, there exists a finite codimension set of initial data satisfying the above conclusions.
\end{theorem}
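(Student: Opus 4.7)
My plan is to work entirely in the self-similar variables $(s,y)$ introduced in \eqref{selfsimilarchange} and prove the theorem as a stability statement about the profile $(\Psi_p, S_p)$ under the flow \eqref{eq:fulleq_SPsi}. Writing the solution as $(\Psi,S) = (\Psi_p,S_p) + (\tilde\Psi,\tilde S)$ and using $U = \nabla\Psi$, the equations for the perturbation $(\tilde U,\tilde S)$ are a linear transport-type system on a compressible Euler background plus nonlinear and inhomogeneous corrections, the latter being the dispersive/quantum-pressure term weighted by $e^{(4-2r)s}$. Since $r>2$, this weight decays exponentially as $s\to\infty$, so for $s_0$ large enough (equivalently $T$ small enough) the NLS problem is a small perturbation of the Euler self-similar problem studied in \cite{Buckmaster-CaoLabora-GomezSerrano:implosion-compressible,CaoLabora-GomezSerrano-Shi-Staffilani:nonradial-implosion-compressible-euler-ns-T3R3}. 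My goal will be to construct a solution on $[s_0,\infty)$ whose perturbation stays uniformly small in an appropriate weighted Sobolev norm, and then to translate this back via \eqref{selfsimilarchange} into blow-up at $t=T$.

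The core of the argument is an energy estimate at two scales. At low regularity, I will use weighted $L^2$ norms adapted to the linearized transport operator around $(\Psi_p,S_p)$; the key damping comes from the repulsivity bounds \eqref{eq:radial_repulsivity}--\eqref{eq:angular_repulsivity}, which, after splitting into radial and angular components (as in \cite{CaoLabora-GomezSerrano-Shi-Staffilani:nonradial-implosion-compressible-euler-ns-T3R3}), give strict coercivity of the symmetric part of the linear operator once $\tilde\eta$ can absorb the error terms from transport by the dilation field $y\cdot\nabla$ and by $\nabla\Psi_p$. At high regularity, I will close an $H^k$ estimate with $k$ large (depending on $d=8$) by commuting derivatives through the system; following the hint in the introduction, I will work with $\log S$ rather than $S$ itself, so that the derivative-losing term $2\alpha S\,\mathrm{div}(U)$ becomes $2\alpha\,\mathrm{div}(U)$ and the highest-order energy of the system can be organized as a combined Euler-type energy plus NLS-Hamiltonian-type contribution. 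The quantum-pressure correction is controlled by its $e^{(4-2r)s}$ prefactor, which beats the finite number of derivative losses it introduces provided $s_0$ is large enough.

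To obtain initial data on $\mathbb{T}^d$ rather than on $\mathbb{R}^d$, I will take the Euclidean-scale ansatz given by $(\Psi_p,S_p)$ on a ball of radius comparable to $e^{s_0}$ (in $y$-coordinates), cut it off smoothly outside of a larger ball that still sits inside $e^{s_0}\mathbb{T}^d$, and periodize. Because the profiles decay polynomially by \eqref{eq:profiles_decay} and the blow-up is localized at the origin with self-similar rate $(T-t)^{1/r}$, the tail created by the cutoff contributes only errors of size $e^{-\kappa s}$ for some $\kappa>0$ in the relevant norms, and finite speed of propagation of the transport operator (up to the dispersive correction, which is again dominated by its $e^{(4-2r)s}$ factor) ensures that what happens far from the origin never reaches the imploding region before $s\to\infty$. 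The cutoff therefore enters only as another small forcing that the energy estimate already absorbs.

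The hard step, and the reason for the phrase \emph{finite codimension} in the statement, is that the linearized operator around the profile has finitely many unstable eigenmodes, associated to the symmetries of \eqref{eq:NLS} (scaling, modulation of the phase, translations, and Galilean transformations), plus possibly a further finite set of genuinely unstable modes inherited from the Euler problem. To close the argument I will project the perturbation onto the stable subspace and reduce the problem to choosing the initial data's components along the unstable subspace by a Brouwer-type topological shooting argument: one shows that the unstable projection of the perturbation, viewed as a map from the finite-dimensional space of unstable parameters at time $s_0$ to the same space at a first exit time, is continuous and has nontrivial degree, so it must have a zero. This is the standard way the Merle--Rapha\"el--Rodnianski--Szeftel framework produces codimension-finite initial data, and I expect the main technical difficulty to be verifying the spectral picture for the non-radial linearized operator in dimension $d=8$ with $p=3$, which is precisely where the new angular repulsivity \eqref{eq:angular_repulsivity}, proved in Lemma \ref{lemma:angular_repulsivity}, is indispensable.
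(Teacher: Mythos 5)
Your plan matches the architecture of the paper's proof: Madelung transform, self-similar variables, perturbation around the damped profile, two-scale energy estimate (low-order weighted $L^2$ plus high-order estimate built on $w=\tfrac12\log P$ to avoid derivative loss), exploitation of the exponentially small $e^{(4-2r)s}$ prefactor on the quantum pressure, repulsivity \eqref{eq:radial_repulsivity}--\eqref{eq:angular_repulsivity}, periodization via a cut-off at scale $e^s$, and a Brouwer-type shooting argument on a finite-dimensional set of unstable directions.

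Two remarks on points that deviate slightly from what is actually needed. First, your claim that the finitely many unstable modes are ``associated to the symmetries of \eqref{eq:NLS} (scaling, modulation of the phase, translations, and Galilean transformations)'' is a heuristic the paper does not prove and does not use. The paper never identifies the unstable subspace with symmetry generators; instead it introduces the truncated operator $\mathcal{L}_t$ in \eqref{eq:cutoffL}, with a compactly supported cut-off $\chi_2$ and an artificial damping $-J(1-\chi_1)$, and shows by dissipativity/maximality (Lemma \ref{lemma:lineardissipativity}) that $\mathcal{L}_t$ generates a semigroup with a finite-dimensional unstable subspace $V_{\rm{uns}}$ of smooth functions (Propositions \ref{prop:maxdis}, \ref{prop:smooth}). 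If you tried to carry out your version literally --- classify the unstable modes as symmetry directions and project them off --- you would face the nontrivial issue of showing there are no further unstable directions, which is not addressed and which the abstract truncation argument sidesteps. Second, ``finite speed of propagation'' is not literally available for this system; what the paper actually does is (i) an energy estimate on the ball $|y|\le \tfrac{11}{10}C_0$ showing the truncated and full systems agree there (Lemma \ref{lemma:agreement}), using that the boundary flux has a good sign by \eqref{eq:radial_repulsivity} and \eqref{critical point condition}, and (ii) pointwise trajectory estimates along the outgoing exponential characteristics $y = y_\star e^{s-s_\star}$ for $|y|\ge C_0$. These implement your intuition but are genuinely energy/ODE arguments, not a propagation-speed bound. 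With those two points corrected, your proposal is the paper's proof.
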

\begin{theorem} \label{th:Euclidean}  Let $\Psi_{p}, S_{p}$ be self-similar profiles solving \eqref{eq:SSprofiles} for $(d,p) = (8,3)$ and satisfying \eqref{eq:profiles_positive}--\eqref{critical point condition}, for some $r>2$. Let $T > 0$ sufficiently small. 

Then, there exists $C^\infty$ initial data $v_0$ for which equation \eqref{eq:NLS} on $\mathbb R^d$ blows up at time $T$ in a self-similar manner. More concretely, for any fixed $y\in \mathbb R^d$, $v(y,t)=\sqrt{\rho}(y,t)e^{i\Psi(y,t)}$ with $\rho\in \mathbb{R}^{+}$, $\Psi\in \mathbb{R}$, we have:
\begin{align*} 
\lim_{t\rightarrow T^-}r (T-t)^{1-\frac{2}{r}} \Psi\left((T-t)^{\frac1r}y,t\right) &= \Psi_{p} (|y|), \\
\lim_{t\rightarrow T^-} \left( \alpha^{-1 } r (T-t)^{1-\frac{1}{r}} \right)^{1/\alpha} \rho\left((T-t)^{\frac1r}y,t\right) &= S_{p}(|y|)^{1/\alpha} \,.
\end{align*}
Moreover, there exists a finite codimension set of initial data satisfying the above conclusions.
\end{theorem}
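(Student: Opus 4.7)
The plan is to work in the self-similar variables of \eqref{eq:fulleq_US} and prove that, for a finite-codimension set of smooth initial data, the rescaled solution $(\Psi, S)$ converges to $(\Psi_p, S_p)$ as the self-similar time $s \to \infty$; undoing \eqref{selfsimilarchange} then yields the blow-up at $t = T$ and the pointwise limits stated in the theorem. Because $r > 2$, the inhomogeneous quantum-pressure term $e^{(4-2r)s}\nabla\frac{\Delta(S^{1/(2\alpha)})}{S^{1/(2\alpha)}}$ in \eqref{eq:fulleq_US} decays exponentially in $s$ once high-order Sobolev control of $\log S$ is in hand, so at leading order the dynamics is a perturbation of the compressible Euler system already handled in \cite{Merle-Raphael-Rodnianski-Szeftel:implosion-i, Buckmaster-CaoLabora-GomezSerrano:implosion-compressible, CaoLabora-GomezSerrano-Shi-Staffilani:nonradial-implosion-compressible-euler-ns-T3R3}.

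First I would pass to perturbative variables $\tilde U := U - \nabla \Psi_p$ and $\sigma := \log S - \log S_p$. The logarithmic choice is essential: it turns the quantum pressure into a linear second-order operator on $\sigma$ whose top-order coefficient depends only on the profile, eliminating the derivative loss that a naive $\tilde S = S - S_p$ perturbation would create. The linearized operator $\mathcal L$ is a matrix transport operator on $\mathbb R^d$ with transport field $y + 2\nabla \Psi_p$ plus zeroth-order coupling and the small quantum remainder. Working in a polynomially weighted Sobolev space $H^k_\omega$ compatible with the decay \eqref{eq:profiles_decay}, $\mathcal L$ generates a strongly continuous semigroup whose spectrum in $\{\mathrm{Re}\,\lambda > -\delta_0\}$ consists of finitely many eigenvalues of finite multiplicity. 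These unstable modes, arising from NLS symmetries together with the radial instabilities from \cite{Merle-Raphael-Rodnianski-Szeftel:implosion-i, Buckmaster-CaoLabora-GomezSerrano:implosion-compressible} and the non-radial ones identified in \cite{CaoLabora-GomezSerrano-Shi-Staffilani:nonradial-implosion-compressible-euler-ns-T3R3}, fix the codimension; a Brouwer-type shooting argument selects, within the finite-codimension submanifold of admissible initial data, a trajectory that stays on the stable subspace for all $s \ge s_0$.

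The analytic core is a uniform-in-$s$ top-order energy estimate. I would build
\begin{equation*}
\mathcal E_k(s) = \sum_{|\beta| \le k} \int_{\mathbb R^d} \omega(|y|)\bigl(|\nabla^\beta \tilde U|^2 + \alpha S_p^2 |\nabla^\beta \sigma|^2\bigr)\, \mathrm d y + \mathcal H_k(s),
\end{equation*}
where $\mathcal H_k$ is a correction built from the NLS Hamiltonian tailored so that its $s$-derivative cancels the otherwise derivative-losing cross-term produced by the quantum contribution, as advertised in the introduction. Differentiating \eqref{eq:fulleq_US} up to $k$ derivatives and integrating by parts against $\omega$, every commutator produced by the transport field $y + 2U$ and the nonlinearity is either multiplied by $e^{(4-2r)s}$ (harmless under a bootstrap) or yields a pointwise coefficient in front of $|\nabla^\beta \tilde U|^2$ and $|\nabla^\beta \sigma|^2$ that must be shown to be uniformly damping. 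For the radial component of $\nabla^\beta \tilde U$ the damping follows from \eqref{eq:radial_repulsivity}; for the angular components, which are new precisely because we drop radial symmetry, it follows from the angular repulsivity \eqref{eq:angular_repulsivity} proved for $(d,p) = (8,3)$ in the Appendix. Combining everything gives a differential inequality $\tfrac{d}{ds}\mathcal E_k \le -c\,\mathcal E_k + C e^{-\eta s} + (\text{lower order})$ with $c > 0$, which closes via a bootstrap on $s \in [s_0, \infty)$ together with low-order estimates.

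The main obstacle is precisely this non-radial top-order estimate: identifying the exact combination of Euler-type energy, logarithmic-density energy and Hamiltonian correction that simultaneously removes the derivative loss of the quantum pressure and makes the pointwise damping from \eqref{eq:radial_repulsivity}--\eqref{eq:angular_repulsivity} visible on every angular mode of $\nabla^\beta \tilde U$. Once it is in place, the remaining steps are routine: smoothness and the prescribed behaviour at infinity of the initial datum are obtained by smoothly truncating $(\Psi_p, S_p)$ outside a large ball and extending to a $C^\infty$ non-decaying background, and the effect of this truncation on the inner dynamics is controlled before the blow-up time by a persistence-of-regularity argument combined with the smallness of the quantum contribution; the Brouwer argument then removes the finite-dimensional instability, and passing $s \to \infty$ in \eqref{selfsimilarchange} produces the pointwise limits claimed in the statement.
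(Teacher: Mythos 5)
Your proposal follows essentially the same strategy as the paper's: Madelung transform to hydrodynamical variables, self-similar ansatz \eqref{selfsimilarchange}, a linearized operator with finitely many unstable modes handled by a topological/shooting argument, weighted energy estimates driven by the radial and angular repulsivity \eqref{eq:radial_repulsivity}--\eqref{eq:angular_repulsivity}, and the use of the logarithm of the density to render the quantum pressure derivative-loss-free. You also correctly identify the outer cutoff of the profile as the way to produce smooth non-decaying initial data on $\mathbb R^d$, matching the paper's damped profile \eqref{cutoffdensity}--\eqref{cutoffprofile0}.

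There is, however, a genuine gap: the critical top-order energy estimate is exactly where your sketch stops. You write $\mathcal E_k$ abstractly as ``Euler energy $+\,\mathcal H_k$'' with an unspecified Hamiltonian correction, and you concede this is ``the main obstacle.'' The paper's construction is concrete and rather rigid: the high-order energy \eqref{highordermestimate} is written in the \emph{full} variables $(S,\Psi,w)$ rather than perturbations, is weighted by the \emph{density} $P$ (times the polynomial $\phi^l$) rather than a purely polynomial weight $\omega$, stages the derivative counts as $k-l-1$ on $S$ versus $k-l$ on $\Psi$ and $w$, and carries an explicit time-dependent prefactor $e^{(4-2r)s}$ in front of the $\int |\nabla^{k-l}w|^2 P\phi^l$ piece. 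The $P$-weight is essential for the cancellation $B_2+B_3=O(E_{l,0})$ between the $\Psi$- and $w$-equations, which rests on the algebraic identity $2P\nabla w=\nabla P$; it is not evident that an abstract correction $\mathcal H_k$ built over a polynomial-weighted perturbative energy would reproduce that cancellation. You also omit the lower tier of the bootstrap — the $L^\infty$ bounds \eqref{eq:bootstrap_low_der00}--\eqref{eq:bootstrap_low_der02}, the low-derivative weighted estimate \eqref{eq:bootstrap_Hm}, and the agreement Lemma~\ref{lemma:agreement} linking the truncated linear dynamics on $B(0,C_0)$ to the full equation — all of which are needed to close the scheme. In short, the plan is right and recognizably the paper's, but the key lemma is named rather than proved.
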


\begin{remark}
For simplicity, we have chosen to fix a particular instance of $(d,p)$. In principle, one could prove a similar result for other values of $(d,p)$ using a similar study of the angular repulsivity property, and even a larger class by means of a computer-assisted proof as in \cite{Buckmaster-CaoLabora-GomezSerrano:implosion-compressible}.
\end{remark}

\begin{remark}
The Sobolev norm $\|\nabla^{s}v\|_{L^2}$ blows up in both $\mathbb{R}^d$ or $\mathbb{T}^d$ when $s> \frac{d}{2(r-1)}-\frac{2}{p-1}.$ In fact we have 
\begin{align*}
&\quad \|\nabla^{s}(\sqrt{\rho}e^{i\psi})\|_{L^2}^{2}\\
&\geq \int _{|x|\leq (T-t)^{\frac{1}{2}}}|\nabla^{s}(\sqrt{\rho}e^{i\psi})^2|dx\\
&\gtrsim \int_{|y|\leq 1}|(T-t)^{\frac{1}{2\alpha r}-\frac{1}{2\alpha}}(T-t)^{\frac{-s}{r}}|^2|\nabla_{y}^{s}(\sqrt{P(y)}e^{i\Psi(y)(T-t)^{\frac{2}{r}-1}})|^2dy (T-t)^{\frac{d}{r}}\\
&\gtrsim (T-t)^{\frac{1}{\alpha r}-\frac{1}{\alpha}-\frac{2s}{r}+\frac{d}{r}+2s(\frac{2}{r}-1)}\\
&=(T-t)^{\frac{1}{\alpha r}-\frac{1}{\alpha}+\frac{d}{r}-2s(1-\frac{1}{r})}.
\end{align*}
Then when $s>\frac{\frac{1}{\alpha r}-\frac{1}{\alpha}+\frac{d}{t}}{2(1-\frac{1}{r})}=\frac{d}{2(r-1)}-\frac{2}{p-1},$
we have blow up of the $\dot{H}^s$ norm.
\end{remark}
\begin{remark} In recent years several works have been put forward in which 
it was proved that when one studies the long time dynamics of a periodic NLS 
the nature of the periodicity, namely the rationality or   irrationality\footnote{We say that a torus $\T^d$ of periods $T_1, T_2, \dots T_d$ is irrational if there is no non-zero vector $\vec w=(w_1,\dots, w_d)  \in \Z^d$ such that  $\sum_{i=1}^dw_iT_i=0$.} of the  domain $\T^d$,  matters, see  \cite{Herr-Kwak:strichartz-estimates-gwp-cubic-nls-t2,Deng-Germain:growth-nls-irrational-tori,
Deng-Germain-Guth-RydinMyerson:strichartz-nls-nonrectangular-tori,
Deng-Germain-Guth:strichartz-nls-irrational-tori,
Staffilani-Wilson:stability-cubic-nls-irrational,
Fan-Staffilani-Wang-Wilson:bilinear-strichartz-irrational,
Giuliani-Guardia:sobolev-explosion-cubic-nls-irrational,
Camps-Staffilani:modified-scattering-cubic-nls-diophantine,
Hrabski-Pan-Staffilani-Wilson:energy-transfer-nls-irrational-tori}.  This physically makes sense since as   time evolves the nonlinear solution starts seeing the different effects of the periodic boundary conditions. In our case, as one can gather from  the conclusion of Theorem \ref{th:periodic}, we do not distinguish  between a rational or irrational torus $\T^d$. This is because the blow up result we prove is localized and not close to the boundary. Of course we do not exclude that there could be other types of blow up mechanism that indeed are sensitive to the nature of the periodicity of the boundary, but the study of this phenomenon is not the scope of the present paper.
\end{remark}

\subsection{Structure of the proof of Theorems \ref{th:periodic} and \ref{th:Euclidean}}

 We define the smooth cut-off function,

\begin{equation}\label{cutofffunction02}
\hat{\mathfrak{X}}_d (x) =1 \text{ when } |x| \leq \frac{1}{2}, \quad \hat{\mathfrak{X}}_d (x) = 0, \text{ when } |x| \geq \frac{2}{3}.
\end{equation}
\begin{equation}\label{cutofffunction03}
\tilde{\mathfrak{X}}_d (x) =0 \text{ when } |x| \leq \frac{1}{8}, \quad \tilde{\mathfrak{X}}_d (x) = 1, \text{ when } |x| \geq \frac{1}{4}.
\end{equation}
With $n_d$ sufficiently large, we define
\begin{equation}\label{cutofffunction}
\mathfrak{X}_d (x) =1 \text{ when } |x| \leq \frac{1}{2}, \quad \mathfrak{X}_d (x) = \langle x \rangle^{-n_d}, \text{ when } |x| \geq \frac{2}{3}.
\end{equation}
For the periodic case, for the sake of simplicity, we set the torus be $[-1,1]^{d}$. Tori of other size/shape can be treated similarly. In order to fit the periodic boundary condition, we define the damped profile
as follows: 
\begin{equation}\label{cutoffprofile02}
\Psi_d(y) = \Psi_p(y) \hat{\mathfrak{X}}_d \left( \frac{y}{ e^s} \right)
\end{equation} 
\begin{equation}\label{cutoffprofile}
S_d(y) = S_p(y) \hat{\mathfrak{X}}_d \left( \frac{y}{ e^s} \right)+e^{-(r-1)s}\tilde{\mathfrak{X}}_d \left( \frac{y}{ e^s} \right)
\end{equation} 
For the whole space $y\in \mathbb{R}^{d}$, notice that from \eqref{eq:profiles_decay} and \eqref{decompositionofsol}, the corresponding solution of \eqref{eq:NLS} has infinite Hamiltonian and mass. We can use a similar damped profile as in \cite{Merle-Raphael-Rodnianski-Szeftel:implosion-nls, Buckmaster-CaoLabora-GomezSerrano:implosion-compressible}:

\begin{equation}\label{cutoffdensity}
S_d(y) = S_p(y) \mathfrak{X}_d \left( \frac{y}{e^s} \right).
\end{equation} 
\begin{equation}\label{cutoffprofile0}
\Psi_d(y) = \Psi_p(y).
\end{equation} 

Let us remark here that the factor $e^s$ is not zero at our initial time $s_0$. Thus, one can think of $\mathfrak{X}_d \left( \frac{y}{e^s} \right)$ as $\mathfrak{X}_d \left( \frac{y}{L e^{s-s_0}} \right)$ for some $L$ sufficiently large. Both expressions are equivalent by taking $L = e^{s_0}$, and $s_0$ will also be sufficiently large in our argument (we will start sufficiently close to the singularity time).
\begin{remark}
From now on our proof will focus on the periodic case and use \eqref{cutoffprofile}, \eqref{cutoffprofile02} as damped profile. Our  proof can be easily adapted to the whole space.
\end{remark}

We then define the perturbations of the damped profile:
\begin{equation}\label{perturb defi}
\tilde S(y, s) = S(y, s) - S_d(y, s), \qquad \mbox{and} \qquad \tilde \Psi(y, s) = \Psi (y, s) - \Psi_d(y).
\end{equation}
We obtain, from \eqref{eq:SSequation} and \eqref{eq:SSprofiles}, that the perturbations $\tilde S, \tilde \Psi$ satisfy the equations:
\begin{align}\label{perturb equ}
\p_s \tilde \Psi &= \underbrace{ -(r-2) \tilde \Psi -y \nabla \tilde \Psi - 2 \nabla \Psi_p \cdot \nabla \tilde \Psi  - 2\alpha  S_d \tilde S }_{\mathcal L_{\Psi}} \underbrace{ -\alpha  \tilde S^2  - |\nabla \tilde \Psi|^2 }_{\mathcal N_{\Psi}}\\\nonumber
&\qquad  \underbrace{ - \p_s \Psi_d - (r-2) \Psi_d - y\nabla \Psi_d - |\nabla \Psi_d|^2 - \alpha S_d^2}_{\mathcal E_\Psi} +
 e^{(4-2r)s}   \underbrace{ \frac{\Delta ( S^{1/(2\alpha)} )}{S^{1/(2\alpha)}} }_{\mathcal D}\\\nonumber
 \p_s \tilde S  & = 
\underbrace{
-(r-1) \tilde S - y\nabla  \tilde S
-2  \nabla S_d   \cdot \nabla \tilde \Psi
-2  \nabla  \tilde S \cdot \nabla \Psi_d
-2 \alpha S_d \Delta  \tilde \Psi
-2\alpha \tilde S \Delta \Psi_p }_{\mathcal L_{P}}
\underbrace{ -2  \nabla  \tilde S \cdot \nabla \tilde \Psi
-2 \alpha \tilde S \Delta  \tilde \Psi }_{\mathcal N_{P}}
\\\nonumber
&\qquad \underbrace{ - \p_s S_d - (r-1) S_d - y\nabla S_d - 2\nabla S_d \cdot \nabla \Psi_d - 2\alpha S_d \Delta \Psi_d}_{\mathcal E_S}  
\end{align}

Let us reexpress $\mathcal E_S$. From \eqref{eq:SSprofiles}, we have that 
\begin{align}\label{errordefi}
\mathcal{E}_{\Psi} & =  \Psi_{p}(y)\nabla\hat{\mathfrak{X}}_{d}(ye^{-s})\cdot y e^{-s} +\hat{\mathfrak{X}}_{d}(ye^{-s})\underbrace{ \left( -(r-2)\Psi_{p}(y)-y\cdot \nabla \Psi_{p}(y)-|\nabla \Psi_{p}(y)|^2-\alpha|S_{p}(y)|^2 \right)}_{0} \\
& -\Psi_{p}(y)\nabla\hat{\mathfrak{X}}_{d}(ye^{-s})\cdot y e^{-s}+|\nabla \Psi_{p}(y)|^2(\hat{\mathfrak{X}}_{d}(ye^{-s})-\hat{\mathfrak{X}}_{d}^2(ye^{-s}))+\alpha|S_{p}|^2(\hat{\mathfrak{X}}_{d}(ye^{-s})-\hat{\mathfrak{X}}_{d}^2(ye^{-s}))
\nonumber \\
& -\Psi_{p}^2(y)|\nabla \hat{\mathfrak{X}}_{d}(ye^{-s})|^2e^{-2s}-\alpha e^{-2(r-1)s}\tilde{\mathfrak{X}}_{d}^2(ye^{-s})-2\Psi_{p}(y)\nabla\Psi_{p}(y)\cdot \nabla \hat{\mathfrak{X}}_d(ye^{-s})\hat{\mathfrak{X}}_d(ye^{-s})e^{-s} \nonumber \\
& -2\alpha e^{-(r-1)s}\tilde{\mathfrak{X}}_{d}(ye^{-s})S_{p}(y)\hat{\mathfrak{X}}_{d}(ye^{-s}) \nonumber \\
& = |\nabla \Psi_{p}(y)|^2(\hat{\mathfrak{X}}_{d}(ye^{-s})-\hat{\mathfrak{X}}_{d}^2(ye^{-s}))+\alpha|S_{p}|^2(\hat{\mathfrak{X}}_{d}(ye^{-s})-\hat{\mathfrak{X}}_{d}^2(ye^{-s}))
-\Psi_{p}^2(y)|\nabla \hat{\mathfrak{X}}_{d}(ye^{-s})|^2e^{-2s} \nonumber \\
& -\alpha e^{-2(r-1)s}\tilde{\mathfrak{X}}_{d}^2(ye^{-s})-2\Psi_{p}(y)\nabla\Psi_{p}(y)\cdot \nabla \hat{\mathfrak{X}}_d(ye^{-s})\hat{\mathfrak{X}}_d(ye^{-s})e^{-s}-2\alpha e^{-(r-1)s}\tilde{\mathfrak{X}}_{d}(ye^{-s})S_{p}(y)\hat{\mathfrak{X}}_{d}(ye^{-s}), \nonumber
\end{align}
\begin{align}\label{error02}
\mathcal E_S= & S_{p}(y)\nabla\hat{\mathfrak{X}}_{d}(ye^{-s})\cdot y e^{-s}+\nabla \tilde{\mathfrak{X}}_{d}(ye^{-s})\cdot y e^{-s}e^{-(r-1)s}+(r-1)\tilde{\mathfrak{X}}_{d}(ye^{-s})e^{-(r-1)s}\\\nonumber
&-(r-1)(\hat{\mathfrak{X}}_{d}(ye^{-s})S_{p}(y)+e^{-(r-1)s}\tilde{\mathfrak{X}}_{d}(ye^{-s}))\\\nonumber
&-y\cdot \nabla (\hat{\mathfrak{X}}_{d}(ye^{-s})S_{p}(y)+e^{-(r-1)s}\tilde{\mathfrak{X}}_{d}(ye^{-s}))\\\nonumber
&-2\alpha (S_p(y)\hat{\mathfrak{X}}_d(ye^{-s})+e^{-(r-1)s}\tilde{\mathfrak{X}}_d(ye^{-s}))\Delta (\Psi_{p}(y)\hat{\mathfrak{X}}_d(ye^{-s}))\\\nonumber
&-2\nabla (S_p(y)\hat{\mathfrak{X}}_d(ye^{-s})+e^{-(r-1)s}\tilde{\mathfrak{X}}_d(ye^{-s}))\cdot \nabla (\Psi_{p}(y)\hat{\mathfrak{X}}_d(ye^{-s}))\\\nonumber
=&\underbrace{\hat{\mathfrak{X}}_d(ye^{-s})(-(r-1)S_{p}(y)-y\cdot \nabla S_{p}(y)-2\alpha S_{p}(y)\Delta\Psi_{p}(y)-2\nabla S_{p}(y)\cdot \nabla \Psi_{p}(y))}_{0}\\\nonumber
&+2\alpha S_{p}(y)\Delta\Psi_{p}(y)(\hat{\mathfrak{X}}_{d}(ye^{-s})-\hat{\mathfrak{X}}_{d}^2(ye^{-s}))+2 \nabla S_{p}(y)\cdot \nabla \Psi_{p}(y)(\hat{\mathfrak{X}}_{d}(ye^{-s})-\hat{\mathfrak{X}}_{d}^2(ye^{-s}))\\\nonumber
&-2\alpha S_{p}(y)\hat{\mathfrak{X}}_{d}(ye^{-s})(2\nabla \Psi_{p}(y)\cdot \nabla \tilde{\mathfrak{X}}_d(ye^{-s})e^{-s}+\Psi_{p}(y)\Delta\hat{\mathfrak{X}}_{d}(ye^{-s})e^{-2s})\\\nonumber
&-2 \nabla(S_{p}(y)\hat{\mathfrak{X}}_{d}(ye^{-s}))\Psi_{p}(y)\nabla \hat{\mathfrak{X}}_d(ye^{-s})e^{-s}-2S_{p}\nabla\hat{\mathfrak{X}}_{d}(ye^{-s})\cdot \nabla\Psi_{p}(y)\hat{\mathfrak{X}}_d(ye^{-s})e^{-s}\\\nonumber
&-2\alpha e^{-(r-1)s}\Delta(\Psi_{p}(y)\hat{\mathfrak{X}}_{d}(ye^{-s}))\tilde{\mathfrak{X}}_{d}(ye^{-s})-2 e^{-(r-1)s} \cdot\nabla(\Psi_{p}(y)\hat{\mathfrak{X}}_{d}(ye^{-s}))\nabla \tilde{\mathfrak{X}}_{d}(ye^{-s}) e^{-s}.
\end{align}

\subsection{Organization of the paper}

The paper is organized as follows: Section \ref{sec:linear} is devoted to the study of the linearized operator around $S_d, \Psi_d$, and it consists of the proof of Propositions \ref{prop:maxdis}, \ref{prop:smooth} and Lemma \ref{lemma:lineardissipativity}, concerning maximality, smoothness of the unstable modes and dissipativity of the linear operator. Section \ref{sec:nonlinear} upgrades the linear estimates to nonlinear, bootstrapping the bounds of the perturbation. Finally, the Appendix contains a proof of property \eqref{eq:angular_repulsivity} for the globally self-similar solution, as well as some other properties of the solution and some functional analytic results.

\subsection{Notation}

Throughout the paper we will use the following convention:

We will use $C_x$ (resp. $C$) to denote any constant that may depend on $x$ (resp. independent of all the other parameters). These constants may change from line to line but they are uniformly bounded by a universal constant dependent on $x$ (independent of all other parameters). Similarly, we will denote by 
$X\lesssim Y$ and by 
$X\lesssim_x Y$ whenever $X\leq C Y$ and $X\leq C_x Y$ for some $C$, $C_x$ respectively.

 We use $\nabla^{l}f$ for the $l$-th order tensor containing all $l$ order derivatives of $f$. In the case where $f$ is a vector (typically $f = U$) then $\nabla^l U$ is a $(l+1)$-th tensor. We also denote by $| \nabla^k f|$ its $\ell^2$ norm. Note that this norm controls all the derivatives \textit{accounting for reorderings of indices}. For example, we have
 \begin{equation*}
|\nabla^j f|^2 = \sum_{\beta \in [3]^j} |\p_\beta f |^2 \geq \binom{j}{j'} |\p_1^{j'} \p_2^{j-j'} f|^2 .
 \end{equation*}
 Thus, we see that $|\nabla^j f|$ has a stronger control of the mixed derivatives by a combinatorial number, just because any mixed derivative appears multiple times in $\nabla^j f$ as the indices are reordered.


For $\beta=(\beta_1,\beta_2...,\beta_K)$, we let  $\partial_{\beta}=\partial_{\beta_1,\beta_2,\beta_3...\beta_K}$, and $\partial_{\beta^{(j)}}=\partial_{\beta_1,\beta_2,...\beta_{j-1},\beta_{j+1}....\beta_{K}}$ (that is, $\beta_j$ denotes the $j$-th component of $\beta$ and $\beta^{(j)}$ denotes the subindex obtained by erasing the $j$-th component in $\beta$).

We will use $y$ as our self-similar variable and we will denote $R = |y|$ for the radius (given that $r$ is reserved, since $1/r$ is the self-similar exponent of our ansatz). We will denote with $\hat{R}$ the unitary vector field in the radial direction, that is, $\hat R = \frac{y}{|y|} = \frac{y}{R}$. 

\subsection{Acknowledgements}
GCL and GS have been supported by NSF under grant DMS-2052651. GCL, JGS and JS have been partially supported by the MICINN (Spain) research grant number PID2021–125021NA–I00.
JGS has been partially supported by NSF under Grants DMS-2245017 and DMS-2247537, and by the AGAUR project 2021-SGR-0087 (Catalunya). 
JS has been partially supported by an AMS-Simons Travel Grant. GS has been supported by the Simons Foundation through the Simons Collaboration Grant on Wave Turbulence. JGS is also thankful for the hospitality of the MIT Department of Mathematics, where parts of this paper were done.




\section{Linear estimates}
\label{sec:linear}
\subsection{Linear estimates for the truncated operator}

This part is very similar as in \cite[Section 2]{CaoLabora-GomezSerrano-Shi-Staffilani:nonradial-implosion-compressible-euler-ns-T3R3} except for the fact that we work with $\nabla \Psi$ instead of $U$. We consider $\chi_1 (x), \chi_2 (x)$ to be cut-offs with value $1$ near the origin and transition regions $[6/5, 7/5]$ and $[8/5, 9/5]$ respectively. Notice that in the support of $\chi_1$ and $\chi_{2}$, as long as $s_0$ is sufficiently large, we have $(\Psi_{p}, S_{p})=(\Psi_{d},S_d)$.   
\begin{figure}[h]
\centering
\includegraphics[width=0.6\textwidth]{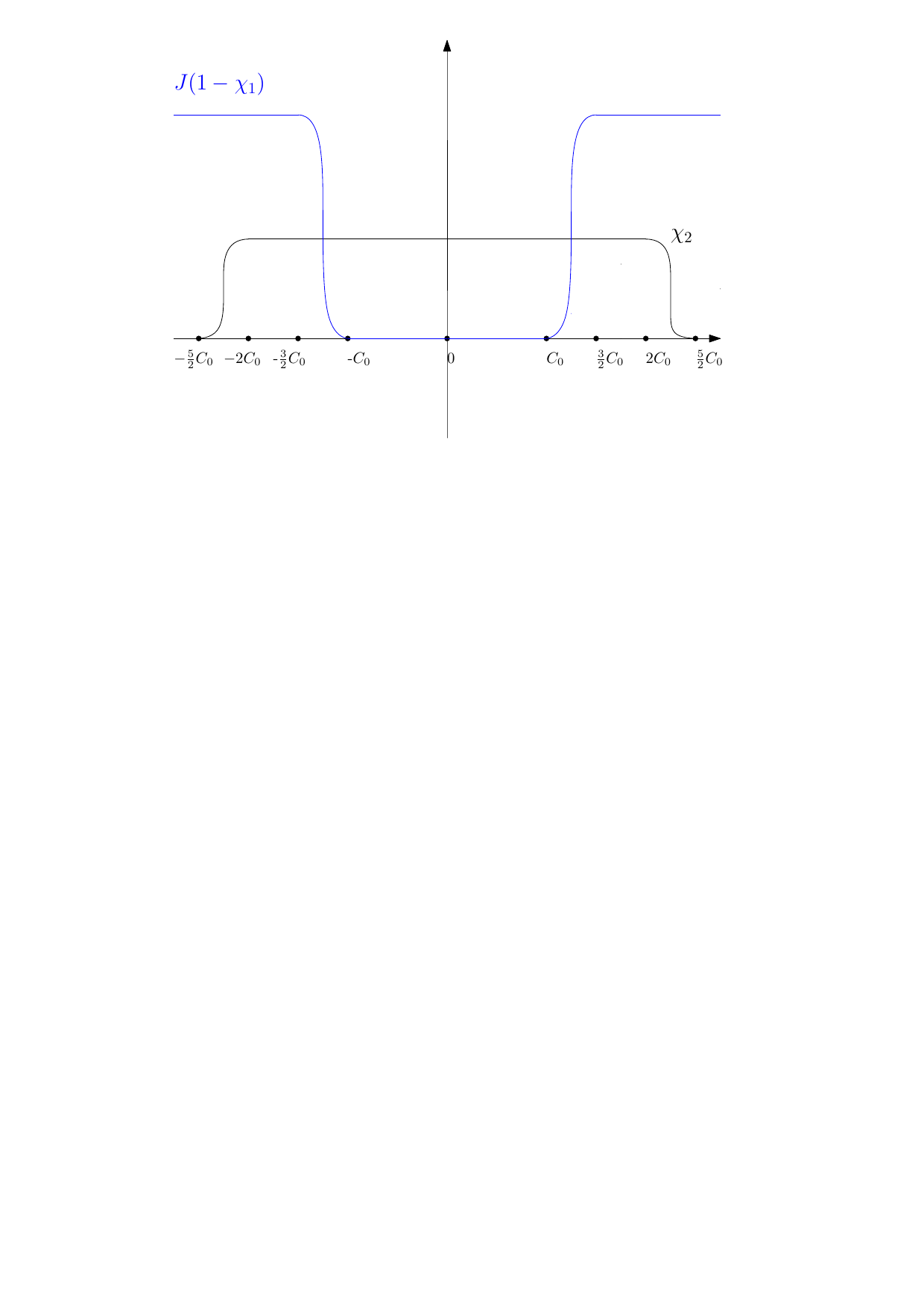}
\caption{The graph of $J(1-\chi_1)$ and $\chi_2$.}
\label{fig:cut off}
\end{figure}

Using those cut-offs, we define the cut-off linear operator as:
\begin{equation} \label{eq:cutoffL}
\mc L_{\Psi, t} = \chi_2(\frac{y}{C_0}) \mc L_\Psi - J(1-\chi_1(\frac{y}{C_0})), \quad \mc L_{S, t} = \chi_2(\frac{y}{C_0})\mc L_S - J(1-\chi_1(\frac{y}{C_0})),
\end{equation}
where $J,C_0$ are sufficiently large constant.

We will study the cut-off linearized operator $\mathcal L_t = (\mathcal L_{\Psi, t}, \mathcal L_{S, t})$ in the space $X$, which we define as follows. First, we consider $\tilde X$ to be the space $H^{m+1}_{\Psi}(Q , \mathbb{R}) \times  H^{m}_s (Q, \mathbb{R})$  where the $\Psi, s$ subscripts make reference to $\tilde \Psi$ (vector field) and $\tilde S$ (scalar field). $Q$ is a cube centered at the origin of length $4C_0$ and we take periodic boundary conditions. We select $m$ to be a sufficiently large integer. We define $X$ to be the subspace of $\tilde X$ formed by $(\Psi, S)\in \tilde X$ such that both $U$ and $S$ are compactly supported on $B(0, 3C_0)$. One can thus think of $X$ as a subspace of $H^{m+1}_\Psi (B(0, 3C_0), \mathbb R) \times H^{m}_s (B(0, 3C_0), \mathbb R)$ with the appropriate vanishing conditions at the boundary. Whenever we take Fourier series of $(\Psi, S) \in X$ it should be understood that we do Fourier series on the cube $Q$.

We put in our space $X$ the standard inner product
\begin{equation*}
\langle (f_\Psi, f_s), (g_\Psi, g_s) \rangle_X = \int_{B(0, 3C_0)} \left(  \nabla^{m+1} f_\Psi \cdot \nabla^{m+1} g_\Psi + \nabla^{m} f_s \cdot \nabla^m g_s + f_\Psi \cdot g_\Psi + f_s g_s \right)
\end{equation*}
 and we obtain the inherited norm
\begin{equation}\label{spacexnorm}
\| (f_\Psi, f_s) \|_X^2 = \int_{B(0, 3C_0)} \left( |\nabla^{m+1} f_\Psi|^2 + (\nabla^m f_s )^2 + |f_\Psi|^2 + f_s^2 \right),
\end{equation}
where $\nabla^m f_s$ is an $m$-tensor indexed by $[d]^m = \{ 1, 2, ...d \}^m$  containing all possible combinations of directional derivatives. In the case of $\nabla^{m+1} f_\Psi$, we have an $(m+1)$-tensor since we have an extra index indicating the component of the vector $f_\Psi$.

Similar as in the \cite{CaoLabora-GomezSerrano-Shi-Staffilani:nonradial-implosion-compressible-euler-ns-T3R3}, we have finite-dimension unstable space (Proposition \ref{prop:maxdis}) and smoothness of the eigenvectors in the unstable space (Proposition \ref{prop:smooth}). 

\begin{proposition}[\protect{\cite[Proposition 1.8]{CaoLabora-GomezSerrano-Shi-Staffilani:nonradial-implosion-compressible-euler-ns-T3R3}}] \label{prop:maxdis} For $\delta_g$ sufficiently small, $m$ sufficiently large and $J$ defined in \eqref{eq:cutoffL} sufficiently large, the following holds. The Hilbert space $X$ can be decomposed as $V_{\rm{sta}} \oplus V_{\rm{uns}}$, where both $V_{\rm{sta}}, V_{\rm{uns}}$ are invariant subspaces of $\mc L_t$. $V_{\rm{uns}}$ is finite dimensional and formed by smooth functions. Moreover, there exists a metric $B$ of $V_{uns}$ such that the decomposition satisfies:
\begin{align} \begin{split} \label{eq:decomposition_condition}
\Re\langle \mc L_t v,  v \rangle_B &\geq \frac{-6}{10}\delta_g \| v \|_B^2,  \qquad  \forall v \in V_{\rm{uns}}, \\
\left\| e^{s \mc L_t} v \right\|_X &\leq e^{-s\delta_g/2} \| v \|_X, \qquad \forall v \in V_{\rm{sta}},
\end{split} \end{align}
where we use $\Re$ to denote the real part.
\end{proposition}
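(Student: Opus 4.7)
The strategy is to follow \cite[Proposition 1.8]{CaoLabora-GomezSerrano-Shi-Staffilani:nonradial-implosion-compressible-euler-ns-T3R3} and adapt it to the NLS setting. The new structural feature is that the primary unknowns are $\tilde \Psi$ and $\tilde S$ rather than $U$ and $S$, which is exactly why the regularity of $\tilde \Psi$ is shifted by one with respect to $\tilde S$ in the definition \eqref{spacexnorm}. At a high level I view $\mathcal{L}_t$ as a dissipative operator at top order plus a compact lower-order perturbation, and extract the spectral decomposition from Fredholm theory together with a Gearhart--Pr\"uss type argument.

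First I would perform the top-order energy estimate, commuting $\partial_\beta$ with $|\beta|=m+1$ through the $\tilde \Psi$-equation and $|\beta|=m$ through the $\tilde S$-equation, and pairing in $L^2$ with the corresponding derivative. The transport $-y\cdot\nabla$ contributes $+d/2$ after integration by parts; the explicit damping $-(r-1)$ and $-(r-2)$ contribute directly; and commuting $\partial_\beta$ through the convective terms $2\nabla\Psi_{p}\cdot\nabla$ together with the cross-coupling $-2\alpha S_{d}\nabla$ produces sink-type contributions whose radial and angular parts are controlled from below by $1+2\partial_{R}\bar U_{p,R}-2\alpha|\partial_{R}S_{p}|$ and $1+2\bar U_{p,R}/R-2\alpha|\partial_{R}S_{p}|$, which are both $\geq\tilde\eta>0$ by the repulsivity properties \eqref{eq:radial_repulsivity}--\eqref{eq:angular_repulsivity}. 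Multiplied by $m$, these sink contributions dominate every bounded lower-order term, so choosing $C_{0}$ large enough that truncation errors are small and $J$ large enough that the term $-J(1-\chi_{1})$ dominates outside the profile region, we obtain a top-order coercive bound of the form
\[
\Re \langle \mathcal{L}_{t} v, v\rangle_{X} \leq -(\delta_{g}+c_{m})\|v\|_{X}^{2} + C\|v\|_{H^{m}\times H^{m-1}}^{2},
\]
with $c_{m}\to+\infty$ as $m\to\infty$.

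Second, on the bounded cube $Q$ the embedding $H^{m+1}\times H^{m}\hookrightarrow H^{m}\times H^{m-1}$ is compact, so the right-hand remainder above acts as a compact perturbation of a maximally dissipative operator. Standard Fredholm theory and sectorial semigroup results then imply that the spectrum of $\mathcal{L}_{t}$ in $\{\Re\lambda>-\delta_{g}\}$ consists of finitely many eigenvalues of finite algebraic multiplicity. I define $V_{\rm uns}$ as the range of the Riesz spectral projection associated to this portion of the spectrum and $V_{\rm sta}$ as its spectral complement; both are invariant by construction. The exponential decay estimate $\|e^{s\mathcal{L}_{t}}v\|_{X}\leq e^{-s\delta_{g}/2}\|v\|_{X}$ on $V_{\rm sta}$ follows from the Gearhart--Pr\"uss theorem applied to the resolvent of $\mathcal{L}_{t}-(-\delta_{g}/2)$, with the uniform resolvent bound on the critical line a consequence of the top-order coercivity above. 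Since $V_{\rm uns}$ is finite-dimensional, I then put $\mathcal{L}_{t}|_{V_{\rm uns}}$ in Jordan normal form and define $B$ as the weighted inner product orthonormalizing the Jordan basis with geometrically decreasing weights along each Jordan chain, which forces the nilpotent off-diagonal block to have $B$-norm at most $\tfrac{1}{10}\delta_{g}$; combined with the fact that the diagonal part satisfies $\Re\lambda\geq -\delta_{g}$, this yields the required bound $\Re\langle\mathcal{L}_{t}v,v\rangle_{B}\geq -\tfrac{6}{10}\delta_{g}\|v\|_{B}^{2}$. Smoothness of unstable eigenvectors follows by bootstrap on the eigenvalue equation $\mathcal{L}_{t}v=\lambda v$: since all coefficients of $\mathcal{L}_{t}$ are smooth, an $H^{k}$ solution is automatically $H^{k+1}$ by iterating the top-order estimate, so $v\in C^{\infty}$.

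The main obstacle will be the top-order energy estimate, specifically the handling of the off-diagonal coupling between the two equations. The term $-2\alpha S_{d}\nabla\tilde S$ in $\mathcal{L}_{\Psi}$ carries one derivative more than the natural regularity of $\tilde S$, and symmetrically $-2\alpha S_{d}\Delta\tilde\Psi$ in $\mathcal{L}_{S}$ carries one derivative more than $\tilde\Psi$. Closing the estimate requires integrating by parts so that the derivatives are shared evenly between the two equations, and the resulting cross-term is precisely what combines with the $2\alpha|\partial_{R}S_{p}|$ contribution in \eqref{eq:angular_repulsivity}. This is exactly the reason the angular repulsivity property is indispensable, and why it requires the separate analysis carried out in the appendix for the specific choice $(d,p)=(8,3)$.
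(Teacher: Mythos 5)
Your top-order energy estimate and the identification of the role of the repulsivity conditions \eqref{eq:radial_repulsivity}--\eqref{eq:angular_repulsivity} match the paper's strategy, but you take a different route to the decomposition and there is one genuine gap.

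On the route: the paper obtains the decomposition from \emph{maximal dissipativity} as in \cite{CaoLabora-GomezSerrano-Shi-Staffilani:nonradial-implosion-compressible-euler-ns-T3R3} (following the Merle--Rapha\"el--Rodnianski--Szeftel framework). Lemma~\ref{lemma:lineardissipativity} supplies two separate ingredients: dissipativity on the finite-codimension subspace $X_K$, and \emph{maximality}, i.e.\ surjectivity of $\mc L_t - \lambda : X \to X$ for some $\lambda$. It is the conjunction of these two that makes $\mc L_t$ generate a semigroup and yields the spectral splitting plus decay on $V_{\rm sta}$. Your proposal instead invokes Gearhart--Pr\"uss, which is a legitimate alternative but is not what the paper does, and---more substantially---it silently assumes the m-dissipativity (``compact perturbation of a maximally dissipative operator'') without verifying the range condition. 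You would need to establish surjectivity of $\mc L_t - \lambda$ on $X$, which in the paper is part of Lemma~\ref{lemma:lineardissipativity}; without it, one cannot even assert that $\mc L_t$ generates a $C_0$-semigroup.

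The genuine gap is the smoothness claim. You assert that smoothness of unstable eigenvectors follows ``by bootstrap on $\mc L_t v = \lambda v$: since all coefficients of $\mc L_t$ are smooth, an $H^k$ solution is automatically $H^{k+1}$.'' This is false for first-order transport operators such as $\mc L_t$, whose principal part is $-y\cdot\nabla - 2\nabla\Psi_p\cdot\nabla$ and the symmetric coupling terms, none of which is elliptic. Already the model equation $(\lambda + y\partial_y)u=0$ has solutions $u \sim |y|^{-\lambda}$ of arbitrary (non-)regularity near the origin, so there is no automatic gain of a derivative from the eigenvalue equation. This is precisely why the paper devotes a separate Proposition~\ref{prop:smooth} to the smoothness, expanding in $d$-dimensional spherical harmonics (exploiting that $U = \nabla\Psi$ is a gradient, so vector spherical harmonics are not needed) and analyzing the resulting radial ODE system as in \cite[Proposition 2.11]{CaoLabora-GomezSerrano-Shi-Staffilani:nonradial-implosion-compressible-euler-ns-T3R3}. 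Some version of that argument, not a naive elliptic bootstrap, is required to conclude that $V_{\rm uns}$ consists of smooth functions.
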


\begin{proof}
The proof of Proposition \ref{prop:maxdis} is the same as in \cite{CaoLabora-GomezSerrano-Shi-Staffilani:nonradial-implosion-compressible-euler-ns-T3R3}. It follows from the dissipativity and maximality of $\mc L_{t}$ in lemma \ref{lemma:lineardissipativity}.
\end{proof}

Let  $X_K$ be the finite codimension subspace of $X$ where the Fourier modes smaller than $K$ vanish, that is 
\[\mathcal F(\Psi)(k) = 0, \quad \mathcal F (S)(k) = 0  \text{ for } |k| < K.
\]
We have the following lemma:
\begin{lemma} [Dissipativity and Maximality]\label{lemma:lineardissipativity} Taking $K$ sufficiently large in terms of $J$, and $J$ large enough depending on $m$, there is a universal constant $C$ (independent of $m$) such that
\begin{equation} \label{eq:diss_final}
    \int_{B(0,3C_0)}\nabla^m \mc L_{\Psi, t} \cdot \nabla^m \tilde \Psi + \nabla^m \mc L_{S, t} \cdot \nabla^m \tilde S \leq - \|(\tilde \Psi, \tilde S ) \|_X^2, \qquad \forall (\tilde \Psi, \tilde S) \in X_K.
\end{equation}
Moreover, there exists some $\lambda$ (depending on $m$, $K$ and $J$) such that $\mc L - \lambda : X \to X$ is a surjective operator.
\end{lemma}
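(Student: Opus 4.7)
My plan is to establish \eqref{eq:diss_final} through a top-order energy estimate in the $X$-norm, exploiting three mechanisms: the strong coercivity coming from the scaling transport $-y\nabla$ when composed with many derivatives, the pointwise coercivity of the top-order coupling between $\tilde \Psi$ and $\tilde S$ provided by the repulsivity conditions \eqref{eq:radial_repulsivity}--\eqref{eq:angular_repulsivity}, and the $J$-damping together with the high-Fourier cut-off $K$, which take care of the outer region and absorb the commutator remainders respectively. Maximality will then follow from a standard shifted Lax--Milgram / Lumer--Phillips argument.

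For dissipativity I would apply $\nabla^{m+1}$ to the first equation of \eqref{perturb equ} and $\nabla^m$ to the second and pair them with $\nabla^{m+1}\tilde\Psi$ and $\nabla^m\tilde S$ respectively. On the inner region $|y|\le 6C_0/5$, where $\chi_1\equiv 1$ and $\chi_2\equiv 1$, the scaling pieces $-(r-2)\tilde\Psi-y\nabla\tilde\Psi$ and $-(r-1)\tilde S-y\nabla\tilde S$ produce, after integrating the transport by parts and counting commutators, diagonal contributions of size $-(r-2)-(m+1)+d/2$ and $-(r-1)-m+d/2$ respectively; for $m$ large these are strongly negative and dominate the $X$-norm. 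On the outer region $|y|\ge 9C_0/5$, where $\chi_2=0$ and $1-\chi_1=1$, the truncated operator reduces to $-J$ and its dissipativity is trivial once $J$ is large. Boundary terms disappear since elements of $X$ vanish outside $B(0,3C_0)$.

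The delicate point lies in the top-order coupling, coming from $-2\nabla\Psi_p\cdot\nabla\tilde\Psi$ and $-2\alpha S_d\tilde S$ in $\mc L_\Psi$ together with $-2\nabla S_d\cdot\nabla\tilde\Psi$, $-2\nabla\tilde S\cdot\nabla\Psi_d$, $-2\alpha S_d\Delta\tilde\Psi$ and $-2\alpha\tilde S\Delta\Psi_p$ in $\mc L_S$. After commuting the top-order derivatives past the profile and integrating once by parts on the $S_d\tilde S\leftrightarrow S_d\Delta\tilde\Psi$ pair to equalise their derivative count, the resulting pointwise quadratic form in $(\nabla^{m+1}\tilde\Psi,\nabla^m\tilde S)$ decomposes into a block that is diagonal in the radial/tangential splitting of $\nabla\tilde\Psi$. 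The radial block has symbol
\[
\begin{pmatrix} 1+2\p_R \bar U_{p,R} & -2\alpha \p_R S_p \\ -2\alpha \p_R S_p & 1+2\p_R \bar U_{p,R} \end{pmatrix},
\]
which is positive definite exactly by the radial repulsivity \eqref{eq:radial_repulsivity}, while the tangential block has an identical symbol with $\p_R\bar U_{p,R}$ replaced by $\bar U_{p,R}/R$, whose positivity is given by the angular repulsivity \eqref{eq:angular_repulsivity}. This is the new structural ingredient beyond \cite{Merle-Raphael-Rodnianski-Szeftel:implosion-nls} and is what forces us to prove \eqref{eq:angular_repulsivity} in Lemma \ref{lemma:angular_repulsivity} in the appendix. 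All remaining commutator terms, in which at least one derivative falls on $\Psi_p,\Psi_d$ or $S_d$, involve at most $m$ derivatives on $(\tilde\Psi,\tilde S)$ and have coefficients decaying like $\jap{y}^{-(r-1)}$ by \eqref{eq:profiles_decay}, so on $X_K$ the interpolation-type gain $\|\cdot\|_{H^m}\les K^{-1}\|\cdot\|_{H^{m+1}}$ lets us absorb them into the main negative contribution once $K$ is chosen large depending on $m$ and $J$. Combining inner-region coercivity, outer-region damping and absorbed remainders yields \eqref{eq:diss_final}.

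For maximality, the dissipativity estimate implies that $\mc L_t - c$ is $m$-accretive for some constant $c$. Surjectivity of $\mc L_t - \lambda$ for $\lambda$ sufficiently large then follows from the Lax--Milgram theorem applied to the sesquilinear form $\langle (\lambda-\mc L_t) u,v\rangle_X$, which is continuous and, for $\lambda$ large, coercive by the dissipativity bound; equivalently, one applies Hille--Yosida to the shifted operator exactly as in the parallel construction of \cite{CaoLabora-GomezSerrano-Shi-Staffilani:nonradial-implosion-compressible-euler-ns-T3R3}. The main obstacle of the whole argument is the asymmetry between the orders $m+1$ and $m$ at which $\tilde\Psi$ and $\tilde S$ enter the two equations: this forces one to handle the top-order coupling by an integration by parts that mixes the two energies, and only the combined radial \emph{and} angular repulsivity makes the resulting quadratic form positive on the full tangent space, not just on its radial subspace.
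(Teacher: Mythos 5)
Your proposal is correct and follows essentially the same route the paper takes, namely the top-order weighted energy estimate from \cite{CaoLabora-GomezSerrano-Shi-Staffilani:nonradial-implosion-compressible-euler-ns-T3R3} (which the paper's own proof simply cites) adapted by substituting $\nabla\tilde\Psi$ for $\tilde U$: transport coercivity giving a $-m$ factor, radial/tangential split of the commutator via the formula \eqref{eq:radialcomp}, positivity of the resulting quadratic form from \eqref{eq:radial_repulsivity}--\eqref{eq:angular_repulsivity}, $J$-damping in the outer region, absorption of lower-order remainders via the Fourier-truncation gain on $X_K$, and Lumer--Phillips/Lax--Milgram for maximality. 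You also implicitly correct the order mismatch in the paper's display (the pairing on $\tilde\Psi$ is at order $m+1$, matching $\|\cdot\|_X$); the one point the paper explicitly flags that you do not discuss — that the cut-offs $\chi_1,\chi_2$ now sit on $(\tilde\Psi,\tilde S)$ rather than $(\tilde U,\tilde S)$, which only contributes lower-order commutators — is cosmetic and harmless.
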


\begin{proof}
 Here $(\nabla \tilde{\Psi},\tilde{S})$ corresponds to $(\tilde{U},\tilde S)$ in \cite{CaoLabora-GomezSerrano-Shi-Staffilani:nonradial-implosion-compressible-euler-ns-T3R3} with the properties of profile \eqref{eq:angular_repulsivity} and \eqref{eq:radial_repulsivity}. The only  difference is that, since we replace $\tilde{U}$ by $\nabla \tilde{\Psi}$, the cut-off in the definition of $\mathcal L_t$ \eqref{eq:cutoffL} is performed on $(\tilde{\Psi},\tilde{S})$ instead of $(\tilde{U},\tilde{S})$.  
Both the dissipativity and maximality work in a similar way since the change does not effect the leading order estimate ($\dot{H}^{m+1}_\Psi (Q) \times \dot{H}^m_S(Q) $). All the terms where derivatives enter the cut-off are bounded as error terms, so it is irrelevant if the cut-off is performed at the level of $\nabla \tilde{\Psi}$ or $\tilde{\Psi}$.
   The error terms are with lower derivative and can be controlled easily by taking $K$ sufficiently large.

\end{proof}

\begin{proposition}[Smoothness of eigenvectors in $V_{uns}$] 
\label{prop:smooth} Let $\delta_g \in (0, 1)$. For sufficiently large $\bar{m}$, $\bar{J}$, there exist $m,J$ such that $m>\bar{m}$, $J>\bar{J}$ and  we have that the space $V_{\rm{uns}} \subset X$ defined in Lemma \ref{prop:maxdis} is formed by smooth functions.
\end{proposition}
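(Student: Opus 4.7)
The plan is to follow the strategy of the analogous smoothness result in \cite[Proposition 1.9]{CaoLabora-GomezSerrano-Shi-Staffilani:nonradial-implosion-compressible-euler-ns-T3R3}, adapted to the NLS setting. The only substantive difference is that our perturbation variable is the scalar phase $\tilde\Psi$ (with $\nabla\tilde\Psi$ playing the role of the velocity $\tilde U$ from that paper), so the cut-off in \eqref{eq:cutoffL} is applied to $\tilde\Psi$ rather than $\tilde U$; as in the proof of Lemma \ref{lemma:lineardissipativity}, this change is cosmetic and does not affect the higher-order structure of the argument.

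The first step is a spectral identification of $V_{\rm uns}$. Since $\mathcal L_t$ acts on a bounded periodic cube $Q$ and its resolvent is compact (the operator is of transport type with smooth coefficients on the support of $\chi_2$ together with the strong damping $-J(1-\chi_1)$ outside), the spectrum in the half-plane $\{\Re \lambda > -\delta_g/2\}$ consists of finitely many isolated eigenvalues of finite algebraic multiplicity. From the growth/decay estimates \eqref{eq:decomposition_condition}, $V_{\rm uns}$ is precisely the sum of the generalized eigenspaces attached to these eigenvalues, so any $v \in V_{\rm uns}$ satisfies $(\mathcal L_t - \lambda)^k v = 0$ for some such eigenvalue $\lambda$ and some integer $k \geq 1$.

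The second step is a regularity bootstrap. Given $v=(\tilde\Psi,\tilde S)\in V_{\rm uns}$ solving $(\mathcal L_t - \lambda)^k v = 0$, I would solve for the transport derivative $y\cdot\nabla v$ in terms of the remaining terms of $\mathcal L_t v$, which involve $v$ itself, smooth functions of $\Psi_d, S_d$, and the cut-offs $\chi_1, \chi_2$. Because $v$ is compactly supported in $B(0,3C_0)$ and the damping $-J(1-\chi_1(\cdot/C_0))$ is strong outside the inner support, the transport operator associated to $\mathcal L_t$ can be inverted in Sobolev spaces, gaining one derivative per iteration. This procedure upgrades $v\in H^{m+1}_\Psi\times H^m_S$ to $v\in H^{m'+1}_\Psi\times H^{m'}_S$ for arbitrary $m'\geq m$, so $v$ is smooth by Sobolev embedding.

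The main obstacle is controlling the bootstrap near the origin, where the transport vector field $y\cdot\nabla$ degenerates. Three ingredients resolve this: smoothness of the profiles $\Psi_p, S_p$ at the origin (by \eqref{eq:regularityatorigin} together with the construction of \cite{Merle-Raphael-Rodnianski-Szeftel:implosion-i,Buckmaster-CaoLabora-GomezSerrano:implosion-compressible}); coercivity from the zeroth-order contributions $-(r-2)\tilde\Psi$ and $-(r-1)\tilde S$ in \eqref{perturb equ} that allows one to invert $(\mathcal L_t - \lambda)$ modulo smoothing operators; and the angular repulsivity \eqref{eq:angular_repulsivity}, which ensures that the higher-order analogue of the energy estimate in Lemma \ref{lemma:lineardissipativity} remains valid. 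Combining these ingredients reduces the bootstrap to the same argument as in \cite{CaoLabora-GomezSerrano-Shi-Staffilani:nonradial-implosion-compressible-euler-ns-T3R3}, completing the proof.
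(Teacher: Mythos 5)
Your proposal misses the key technical ingredient the paper actually uses: the decomposition into $d$-dimensional spherical harmonics, $S = \sum_n S_n(R)\,e_n$, $\Psi = \sum_n \Psi_n(R)\,e_n$. Because the cut-off operator $\mathcal L_t$ commutes with rotations (the profiles and cut-offs are radial), each harmonic mode decouples and the eigenvalue equation reduces to an ODE system in $R$ for each pair $(\Psi_n,S_n)$; the point observed in the paper is that, unlike for compressible Euler, scalar harmonics suffice here because $U=\nabla\Psi$ is a gradient (so no vector spherical harmonics are needed). Smoothness then follows mode-by-mode from an ODE / Frobenius-type analysis, exactly as in \cite[Proposition 2.11]{CaoLabora-GomezSerrano-Shi-Staffilani:nonradial-implosion-compressible-euler-ns-T3R3}. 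You say you are following that reference, but the strategy you actually describe is a physical-space bootstrap with no harmonic decomposition, which is a substantially different route.

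The bootstrap you propose does not close. The claim that one can ``solve for $y\cdot\nabla v$ in terms of the remaining terms of $\mathcal L_t v$ \dots gaining one derivative per iteration'' is incorrect: inverting a first-order transport operator does not gain a Sobolev derivative, and $y\cdot\nabla$ degenerates at $y=0$, so there is no inverse there at all. The three ingredients you invoke to handle the origin do not address this: profile smoothness at $R=0$ gives smooth coefficients but not a regularity gain for the solution; the zeroth-order terms $-(r-2)\tilde\Psi$, $-(r-1)\tilde S$ and the angular repulsivity \eqref{eq:angular_repulsivity} enter the dissipativity energy estimate of Lemma \ref{lemma:lineardissipativity}, but coercivity in $X$ tells you nothing about extra derivatives of an eigenvector. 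The genuine obstacle is precisely the degenerate, non-elliptic nature of the first-order system at the origin, and the only tool in this circle of papers that resolves it is the spherical-harmonics reduction: each mode $e_n$ vanishes to order $n$ at $R=0$, and the radial ODE analysis singles out the solution branch that is both in $H^m$ and smooth. Without that decomposition, your argument has no mechanism to establish smoothness at $y=0$.
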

\begin{proof}
The major difficulty here is the $d$ dimension instead of $3$ dimension. Instead of the vector spherical harmonics used in \cite{CaoLabora-GomezSerrano-Shi-Staffilani:nonradial-implosion-compressible-euler-ns-T3R3}. We use the d-dimension spherical harmonic with
\[
S = \sum_n S_n(R) e_n,\ \quad \Psi=\sum_n \Psi_n(R) e_n,
\]
where $e_n$ are spherical 
harmonics in d-dimensions \cite{Efthimiou-Frye:spherical-harmonics}. Different from the compressible Euler, the decomposition here highly relies on the fact that $U=\nabla \Psi$ is divergence free, therefore vector spherical harmonics are not needed.
A similar proof as \cite[Proposition 2.11]{CaoLabora-GomezSerrano-Shi-Staffilani:nonradial-implosion-compressible-euler-ns-T3R3} follows.
\end{proof}
\subsection{The truncated equation and extended equation}

In order to choose the initial data of equation \eqref{perturb equ}, let
\begin{align}\label{initial data}
\begin{cases}
\tilde{\Psi}_0=\tilde{\Psi}_0^{'}+\sum_{i=1}^{N}a_{i}
v_{i,\psi}\\
\tilde{S}_0=\tilde{S}_0^{'}+\sum_{i=1}^{N}a_{i}
v_{i,s},
\end{cases}
\end{align}
where 
\begin{equation} \label{eq:tilde_is_stable}
(\chi_2 \tilde \Psi_0', \chi_2 \tilde S_0' ) \in V_{\rm{sta}},
\end{equation}
and $(v_{i,u},v_{i,s})$ is an orthogonal and normal basis of $V_{\rm{uns}}$.  Here  ${a_i}$ is an $N$-dimensional vector to be fixed later.

In order to use the linear estimate  (Proposition \ref{prop:maxdis}), we define a truncated equation of \eqref{perturb equ} as:
\begin{equation}\label{truncatedequation} \begin{cases}
\p_s \Psi_t = \chi_2 \left( \frac{y}{C_0} \right) \mathcal L_{\Psi} (\Psi_t, S_t) - J \left( 1 - \chi_1 \left( \frac{y}{C_0} \right) \right)\Psi_t + \chi_2 \left( \frac{y}{C_0}\right) \mathcal F_\Psi \\ 
\p_s S_t = \chi_2 \left( \frac{y}{C_0} \right) \mathcal L_{S} (\Psi_t, S_t) - J \left( 1 - \chi_1 \left( \frac{y}{C_0} \right) \right) S_t + \chi_2 \left( \frac{y}{C_0} \right) \mathcal F_S, \\
(\Psi_t(y, s_0), S_t (y, s_0) )=(\Psi_{t, 0}, S_{t, 0}) = \chi_2 \left( \frac{y}{C_0} \right) ( \tilde \Psi^{'}_0, \tilde S^{'}_0 ) + \sum_{i=1}^{N}a_i
(v_{i,\psi},
v_{i,s}),
\end{cases} \end{equation}
where $\mathcal F_\Psi = \mathcal N_\Psi + \mathcal E_\Psi + e^{(4-2r)s}\mathcal D$ and $\mathcal F_S = \mathcal N_S + \mathcal E_S$ are the corresponding terms in \eqref{perturb equ}. Notice that $\chi_{2}(\frac{y}{C_0})\mc E_{S}$ and $\chi_{2}(\frac{y}{C_0})\mc E_{\Psi}$ vanishes because both terms vanishes in the support of $\chi_{2}(\frac{y}{C_0}).$ Here we have 
\[
P_{\rm{sta}}(\Psi_{t, 0}, S_{t, 0})=\chi_2 \left( \frac{y}{C_0} \right) ( \tilde \Psi^{'}_0, \tilde S^{'}_0 ), \quad P_{\rm{uns}}(\Psi_{t, 0}, S_{t, 0})=\sum_{i=1}^{N}a_i
(v_{i,\psi},
v_{i,s}).
\]$P_{\rm{sta}}$ and $P_{\rm{uns}}$ are the projections to the stable and unstable spaces of $\mathcal L_t$ that appears in \ref{prop:maxdis}. Let us mention that the initial data $(\Psi_{t, 0}, S_{t, 0})$ is compactly supported since the unstable modes will consist on finitely many compactly supported functions (and therefore $P_{\rm{uns}}$ is always compactly supported). 

Let us also note that the truncated equation is not a closed equation. On the contrary, one needs to provide the truncated equation with the forcings $\mathcal F_\Psi, \mathcal F_S$, and in particular, to compute $\mathcal N_\Psi, \mathcal N_S, \mathcal D$ one needs to know $\tilde \Psi, \tilde S$. Note also that the initial data of the system is the initial data of the original (extended) equation with the appropriate cut-off.

We will rewrite the truncated equation \eqref{truncatedequation} as follows
\begin{equation} \label{eq:truncated}
\p_s (\Psi_t, S_t) = \mathcal L_t (\Psi_t, S_t) + \mathcal F_t
\end{equation}
where $\mathcal L_t$ includes the linear operator with a cut-off and the artificial damping, while $\mathcal F$ includes both coordinates of the forcing :
\begin{equation*}
\mathcal L_t (\Psi_t, S_t) 
=\left(\chi_2 \left( \frac{y}{C_0} \right) \mathcal L_{\Psi} (\Psi_t, S_t) - J \left( 1 - \chi_1 \left( \frac{y}{C_0} \right) \right)\Psi_t, \chi_2 \left( \frac{y}{C_0} \right) \mathcal L_{S} (\Psi_t, S_t) - J \left( 1 - \chi_1 \left( \frac{y}{C_0} \right) \right)S_t\right)
\end{equation*}
and
\begin{equation*}
\mathcal F_t  
=\left(\chi_2  \left( \frac{y}{C_0} \right) \mc F_{\Psi}, \chi_2 \left( \frac{y}{C_0} \right)  \mc F_{S}\right).
\end{equation*}
We will study the truncated equation in the space $X$ defined in \eqref{spacexnorm}.

This setting has the following two fundamental properties. The first one is that $\mathcal L_t$ has finitely many unstable directions, and the second one that the truncated equation agrees with the original equation for $|y| \leq C_0$. 
Now, we state a lemma describing the agreement of the truncated equation with the original system:
\begin{lemma} \label{lemma:agreement} Let $(\tilde \Psi, \tilde S)$ be a smooth solution of the original (extended) equation \eqref{perturb equ} for $s \in [s_0, s_1]$. Then, there exists a solution $(\Psi_t, S_t)$ of \eqref{eq:truncated} for $s \in [s_0, s_1]$. Moreover, both solutions agree for $|y| \leq \frac{11}{10}C_0$. 
\end{lemma}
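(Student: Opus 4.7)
The plan has two parts: existence of $(\Psi_t, S_t)$ on $[s_0, s_1]$, and agreement with $(\tilde \Psi, \tilde S)$ on $\{|y| \leq \tfrac{11}{10} C_0\}$. For existence, I would observe that \eqref{eq:truncated} is linear in $(\Psi_t, S_t)$, since the forcing $\mathcal F_t = \chi_2(\cdot/C_0)(\mathcal F_\Psi[\tilde\Psi, \tilde S], \mathcal F_S[\tilde\Psi, \tilde S])$ depends only on the already-given smooth $(\tilde\Psi, \tilde S)$. Lemma \ref{lemma:lineardissipativity} provides dissipativity on the high-mode subspace $X_K$ and surjectivity of $\mathcal L_t - \lambda$ for some $\lambda$, while Proposition \ref{prop:maxdis} handles the finite-dimensional unstable sector. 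Together these suffice for a Lumer--Phillips / Hille--Yosida argument to show that $\mathcal L_t$ generates a $C_0$-semigroup on $X$. The Duhamel representation
\begin{equation*}
(\Psi_t, S_t)(s) = e^{(s-s_0)\mathcal L_t}(\Psi_{t,0}, S_{t,0}) + \int_{s_0}^{s} e^{(s-s')\mathcal L_t}\, \mathcal F_t(s')\, ds'
\end{equation*}
then yields a mild solution on $[s_0, s_1]$, upgraded to a classical one by smoothness of the data and forcing.

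For agreement, set $W = (W_\Psi, W_S) := (\Psi_t - \tilde\Psi, S_t - \tilde S)$. On $\{|y| \leq \tfrac{6}{5}C_0\}$ one has $\chi_2(y/C_0) = 1$ and $1 - \chi_1(y/C_0) = 0$, and moreover $\mathcal F_\Psi, \mathcal F_S$ are the \emph{same} functions of $(\tilde\Psi, \tilde S)$ in both systems, so they cancel upon subtraction. Linearity of $\mathcal L$ then gives $\partial_s W = \mathcal L W$ on this region, with $W(\cdot, s_0) \equiv 0$. Taking the gradient of the $W_\Psi$ equation and setting $(W_U, W_S) := (\nabla W_\Psi, W_S)$, the pair satisfies a first-order symmetric hyperbolic system whose principal part is the linearization of \eqref{eq:fulleq_US} around $(\nabla \Psi_d, S_d)$, with advection vector field $y$ plus smooth bounded perturbations depending on $\nabla \Psi_d$ and $\nabla \Psi_p$.

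To propagate $W \equiv 0$ from $\{|y| \leq \tfrac{6}{5}C_0\}$ into $\{|y| \leq \tfrac{11}{10}C_0\}$ for all $s \in [s_0, s_1]$, I would fix $r^* \in (\tfrac{11}{10}C_0, \tfrac{6}{5}C_0)$ and derive an energy identity of the schematic form
\begin{equation*}
\frac{1}{2}\frac{d}{ds}\int_{|y|\leq r^*}\!\bigl(|W_U|^2 + W_S^2\bigr)\,dy = -\frac{r^*}{2}\int_{|y|=r^*}\!\bigl(|W_U|^2 + W_S^2\bigr)\,d\sigma + B_{\mathrm{cross}}(s) + C \int_{|y|\leq r^*}\!\bigl(|W_U|^2 + W_S^2\bigr)\,dy,
\end{equation*}
where $B_{\mathrm{cross}}(s)$ collects boundary contributions from integration-by-parts in the symmetric cross-couplings $2\alpha S_d \nabla W_S$, $2\alpha S_d \operatorname{div}(W_U)$, and in the transport $2\nabla\Psi_d \cdot \nabla$. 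Each of these is bounded in modulus by $C' \int_{|y|=r^*}(|W_U|^2 + W_S^2)\,d\sigma$ with $C'$ depending only on $\|\nabla\Psi_d\|_{L^\infty}$ and $\|S_d\|_{L^\infty}$, which are uniform in $C_0$. Since $r^* \sim C_0$, taking $C_0$ sufficiently large makes the net boundary contribution non-positive, and Grönwall applied to the remaining bulk estimate forces $W \equiv 0$ on $\{|y| \leq r^*\} \supset \{|y| \leq \tfrac{11}{10}C_0\}$ for all $s \in [s_0, s_1]$.

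The main obstacle is precisely this dominance of the favorable $r^*$-sized dissipative boundary contribution from the $-y\cdot\nabla$ transport over the $O(1)$-sized boundary fluxes coming from the linear couplings; this is a smallness condition on $1/C_0$ consistent with the rest of the scheme, where $C_0$ is taken sufficiently large in Section \ref{sec:nonlinear}.
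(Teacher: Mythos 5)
Your existence argument matches the paper's: both invoke the contraction-semigroup/dissipativity structure of $\mathcal L_t$ from Lemma \ref{lemma:lineardissipativity} and Proposition \ref{prop:maxdis}, together with the already-known smooth forcing, and conclude via Duhamel. Your agreement argument is also organized the same way as the paper's (energy estimate for $\int |\nabla W_\Psi|^2 + W_S^2$ on a ball where both cut-offs are trivial, integration by parts generating a boundary flux, Grönwall); the only stylistic difference is how you make the boundary flux non-positive. You use the raw transport contribution $-\tfrac{r^*}{2}\int_{|y|=r^*}(\cdot)$ and take $C_0$ large to beat the $O(1)$ coupling fluxes, while the paper uses the repulsivity identity $g(R) = R/2 + \partial_R\Psi_p - \alpha|S_p|$ (increasing with $g(1)=0$ by \eqref{eq:radial_repulsivity} and \eqref{critical point condition}), which works for any $C_0 > 1$. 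Both are valid, and $C_0$ is indeed taken large elsewhere.

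However, there is a genuine gap at the end. Your energy $\int_{|y|\leq r^*} |W_U|^2 + W_S^2$ with $W_U = \nabla W_\Psi$ controls only $\nabla W_\Psi$ and $W_S$, so Grönwall gives $\nabla W_\Psi \equiv 0$ and $W_S \equiv 0$ on the ball --- not $W_\Psi \equiv 0$, as you assert in the last sentence of the argument. A priori $W_\Psi(y,s) = f(s)$ for some spatially constant function $f$. The paper closes this by evaluating the equation for $\Psi^{\mathrm{diff}}$ at $y=0$, where $S^{\mathrm{diff}} = 0$ and $\nabla\Psi_p(0)=0$, yielding $\p_s f = -(r-2)f$ with $f(s_0)=0$, hence $f\equiv 0$. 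Your write-up stops one step short of this and needs that addition for the statement $\Psi_t = \tilde\Psi$ (as opposed to just $\nabla\Psi_t = \nabla\tilde\Psi$) on $|y| \leq \tfrac{11}{10}C_0$.
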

\begin{proof} 
The existence part follows from $\mathcal L_t$ being a contraction semigroup modulo finitely many modes, together with the facts $(\Psi_{t, 0}, S_{t, 0}) \in X$ and $\mathcal F_t \in X$. \\

Let us show that they agree in the indicated region. We define $(\Psi^{\rm{diff}}, S^{\rm{diff}}) = (\tilde \Psi, \tilde S) - (\Psi_t, S_t)$. Since the forcing of the truncated equation is computed from the extended equation, that term cancels, and one obtains
\begin{equation*}
\p_s (\Psi^{\rm{diff}},  S^{\rm{diff}}) = \chi_2 \left( \frac{y}{C_0 }\right) \mathcal L (\Psi^{\rm{diff}}, S^{\rm{diff}}) + \left( 1 - \chi_2 \left( \frac{y}{C_0 }\right) \right) \mathcal L (\tilde \Psi, \tilde S) + J \left( 1 - \chi_1 \left( \frac{y}{C_0} \right) \right).
\end{equation*}
If we restrict ourselves to $|y| \leq \frac{11}{10}C_0$, the last two terms are zero due to the cut-offs, and in that region, we obtain
\begin{equation} \label{eq:diff_system}
\begin{cases}
\p_s  \Psi^{\rm{diff}} &=  -(r-2) \Psi^{\rm{diff}} -y \nabla  \Psi^{\rm{diff}} - 2 \nabla \Psi_p \cdot \nabla  \Psi^{\rm{diff}}  - 2\alpha  S_d  S^{\rm{diff}}   \\
\p_s  S^{\rm{diff}} &=  -(r-1)  S^{\rm{diff}} - y\nabla   S^{\rm{diff}}
-2 \nabla \Psi_p  \cdot  \nabla   S^{\rm{diff}}  
-2 \alpha S_d \Delta \Psi^{\rm{diff}}
-2  \nabla S_d   \cdot \nabla  \Psi^{\rm{diff}}
-2  \alpha S^{\rm{diff}} \Delta \Psi_p   
\end{cases}
\end{equation}
Furthermore, we can perform an energy estimates in the equation above and defining $E^{\rm{diff}} = \int_{|y|\leq \frac{11C_0}{10}} |\nabla \Psi^{\rm{diff}} |^2 + |S^{\rm{diff}}|^2$ we get that
\begin{align*}
\p_s E^{\rm{diff}} &\leq C E^{\rm{diff}} - \int_{|y| = 11C_0 /10}  \hat R \left( \frac{y}{2} + \nabla \Psi_p \right) \left( | \nabla \Psi^{\rm{diff}} |^2 + | S^{\rm{diff}} |^2 \right) - 2\alpha \int_{|y| = 11C_0 /10} S_d  \cdot \left(  S^{\rm{diff}} \nabla \Psi^{\rm{diff}}  \right) \\
 &\leq C E^{\rm{diff}} - \int_{|y| = 11C_0 /10}  \left( \frac{|y|}{2} + \p_R \Psi_p - \alpha |S_d| \right) \left( | \nabla \Psi^{\rm{diff}} |^2 + | S^{\rm{diff}} |^2 \right).
\end{align*}
Properties \eqref{eq:radial_repulsivity} and \eqref{critical point condition} ensure that $g(y) = |y|/2 + \p_R \Psi_p - \alpha |S_p| $ is increasing and $g(1) = 0$. Therefore, $g(11C_0/10) > 0$ if we ensure $C_0 > 1$. Taking $C_0 < \frac{1}{10} e^{s_0}$, we also have that $|S_d(C_0)| = |S_p(C_0)|$. Therefore, as long as $1\leq C_0 \leq \frac{1}{10}e^{s_0}$ we obtain that $\p_s E^{\rm{diff}} \leq C E^{\rm{diff}}$ and since $E^{\rm{diff}}$ is zero at time zero (the initial data for $(\tilde \Psi, \tilde S)$ and $(\Psi_t, S_t)$ coincides for $|y| \leq \frac{11}{10}C_0$), we conclude that $E^{\rm{diff}} = 0$. This implies that $\nabla \Psi_t = \nabla \tilde \Psi$ and $S_t = \tilde S$ in this range of $y$. Thus, we have that for $|y| \leq \frac{11}{10}C_0$, $\Psi^{\rm{diff}} = f(s)$ for some function $f(s)$ (constant in space) and $S^{\rm{diff}}  = 0$, and we just need to show  $f(s)=0$. 

We evaluate the first equation of \eqref{eq:diff_system} at $y = 0$, and use that $S^{\rm{diff}}(0, s) = 0$ with $\nabla \Psi_p(0) = 0$. We get
\begin{equation*}
\p_s f = -(r-2) f(s)
\end{equation*}
and since $f(s_0) = 0$ because of the agreement of the initial data, we conclude that $f(s) = 0$ for all $s \in [s_0, s_1]$.
\end{proof}
\section{Nonlinear stability}
\label{sec:nonlinear}
\subsection{Choice of parameters}

We will choose our parameters as follows: 
\begin{equation}\label{choiceofparameter}
\frac{1}{s_0}\ll \delta_{\rm{low}} \ll \frac{1}{E}\ll \frac{1}{k}\ll\frac{1}{m'}, \frac{1}{R_0} \ll  b,\epsilon \ll O(1)\ll E_{0,0},\ll E_{1,0}\ll...E_{\frac{k}{10},0} \ll E .
\end{equation} 

Here, $b, \eps$ are small constants that just depend on the profile and $m', R_0$ are big constants that we use in order to close a low derivative $L^2$ energy estimate. This energy estimate uses the compressible Euler structure, so it is able to close by itself except for the quantum pressure term $e^{(4-2r)} \mathcal D$. However, that term carries a small constant $e^{(4-2r)s_0}$ that is smaller than any other constant involved, so can be closed using higher derivative estimates where the control is much worse.

$k$ measures the number of derivatives in the higher derivative estimates, and $E$ bounds that norm. The precise structure for NLS requires us to consider different energies $E_{l, 0}$ at different levels of derivatives in order to close our bounds. Finally, $\delta_{\rm{low}}$ measures the size of the initial data.

Importantly, none of those constants depends on $\delta_{\rm{low}}$, $s_0$, $E_{l,0}$.

$m'$ is used in the \eqref{eq:bootstrap_Hm} as the order of the low derivative energy estimate.
$R_0$, $b$ are the parameter used to define the weight $\beta$ \eqref{weightdefi} and $\phi$ \eqref{weightdefi}.
$\epsilon$ is used to estimate the decay in \eqref{eq:bootstrap_low_der00}, \eqref{eq:bootstrap_low_der02}, \eqref{eq:bootstrap_Hm}.

 $s_0$ is the initial time taken to be sufficiently large, it can depend on all other constant.

$\delta_{\rm{low}}$ is a sufficiently small number used in the definition of initial conditions \eqref{initialcondition} and bootstrap assumptions Proposition \eqref{prop:bootstrap} to bound the decay. 

$k$, $E_{l,0}$ with $0\leq l \leq \frac{k}{10}$ are used in the \eqref{highordermestimate} to control the high derivative bound.

With the above parameters, we let the initial data \eqref{initial data} satisfies the following condition:
\begin{align}\label{initialcondition}
&\|\tilde{\Psi}_{0}\|_{L^{\infty}},\|\tilde{S}_{0}\|_{L^{\infty}}\leq \delta_{\rm{low}}^{\frac{7}{4}},\\\nonumber
&\|(\chi_{2}(\frac{y}{C_0})\tilde{\Psi}'_{0},\chi_{2}(\frac{y}{C_0})\tilde{S}'_{0})\|_{X}\leq \delta_{\rm{low}}^{\frac{7}{4}},\\\nonumber
&\|\frac{\tilde{S}_{0}}{S_{d}}\|\leq \frac{\delta_{\rm{low}}}{2}\\\nonumber
&\|\beta^{m'}\nabla \tilde{\Psi}_{0}\|_{\dot{H}^{m'}}+\|\beta^{m'}\nabla \tilde{S}_{0}\|_{\dot{H}^{m'}}\leq \frac{\delta_{\rm{low}}}{2}\\\nonumber
&E_{k,l}(s_0)\leq \frac{E_{l,0}}{2},
\end{align}
with weight \begin{align}\label{weightdefi}
\beta(y)=\begin{cases}
1 & \text{ for } |y|\leq R_0,\\
\frac{|y|^{\frac{1}{10}}}{2R_0}, &\text{ for } |y|\geq 4R_0.
\end{cases}
\end{align}
We then have the following bootstrap assumptions:
\begin{prop} \label{prop:bootstrap} For the initial data chosen above \eqref{initial data}, if the solution of \eqref{perturb equ}  exists for $s\in [s_0,s_1]$ and the unstable mode satisfies $\|P_{\rm{uns}}(\Psi_t,S_t)\|_{X} \leq \delta_{\rm{low}}^{\frac{3}{2}} e^{-\epsilon(s-s_0)}$, then we have the following estimates:

low derivative $L^{\infty}$ bound:
\begin{equation} 
\label{eq:bootstrap_low_der00}
\left| \frac{\tilde S}{S_d} \right| < \delta_{\rm{low}},
\end{equation}

\begin{equation} \label{eq:bootstrap_low_der02}
\left| \tilde S \right| < \delta_{\rm{low}} e^{-\eps (s-s_0)}, \qquad \mbox{ and }\qquad \left|\tilde \Psi\right| < \delta_{\rm{low}} e^{-\eps (s-s_0)}
\end{equation}
low derivative energy bound:
\begin{equation} \label{eq:bootstrap_Hm}
\| \beta^{m'} \nabla^{m'} \tilde U \|_{L^2} + \| \beta^{m'}  \nabla^{m'} \tilde S \|_{L^{2}} \les_{m'} \delta_{\rm{low}} e^{-\eps (s-s_0)}.
\end{equation}
 Low derivative $w$ bound:
\begin{equation} \label{eq:bootstrap_Hmw}
\| \beta^{m'} \nabla^{m'-1} w \|_{L^2} \les_{m'} 1
\end{equation}

High derivative bound: \begin{align}\label{highordermestimate}
& E_{k-l}(s)=\int_{e^{s}\mathbb{T}^{d}} |\nabla^{k-l-1}S|^2 P \phi^{l} +\int_{e^{s}\mathbb{T}^{d}} |\nabla^{k-l}\Psi|^2 P \phi^{l} +e^{(4-2r)s}\int_{e^{s}\mathbb{T}^{d}}  |\nabla^{k-l}w|^2 P\phi^{l} \leq E_{l,0},
\end{align}
with $w=\frac{1}{2}\log (P)$, $0\leq l \leq \frac{1}{10}k$, $E_{0,0}\ll E_{1,0} \ll E_{2,0}\ll...E_{\frac{1}{10}k,0}, \ll E $ and weight \begin{align}\label{weightdefi02}
\phi(y)=\begin{cases}
1 & \text{ for } |y|\leq R_0,\\
\frac{|y|^2}{2R_0}, &\text{ for } |y|\geq 4R_0.
\end{cases}
\end{align}

\end{prop}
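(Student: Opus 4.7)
The plan is to close each of the bootstrap bounds in Proposition \ref{prop:bootstrap} by a continuous-induction argument on $s \in [s_0, s_1]$, improving the constants above their initial values using the perturbation system \eqref{perturb equ}. The unstable-mode bound is assumed, so we only need to estimate the stable-plus-outer part of the perturbation. We proceed from the highest-regularity bounds down, because the $L^\infty$ estimates will rely on Sobolev embedding from the weighted $H^{m'}$ bound and on the agreement with the truncated system provided by Lemma \ref{lemma:agreement}.

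For the high-derivative energies $E_{k-l}$ in \eqref{highordermestimate}, the approach is a weighted energy estimate with density weight $P$ and spatial weight $\phi^l$: differentiate the $\Psi$- and $w$-equations $k-l$ times and the $S$-equation $k-l-1$ times, multiply by the corresponding weighted tensors, and integrate by parts. The transport terms $-y\nabla$ yield the coercive gain of order $(r-1-l)$, while the linearised pieces $-2\nabla\Psi_p\cdot\nabla$ and $-2\alpha S_d \Delta$ produce the positive bulk contribution via the radial and angular repulsivity \eqref{eq:radial_repulsivity}--\eqref{eq:angular_repulsivity}. The central new ingredient is to treat the quantum pressure $e^{(4-2r)s}\nabla \frac{\Delta S^{1/(2\alpha)}}{S^{1/(2\alpha)}}$ in the $\Psi$-equation by passing to the logarithmic variable $w=\tfrac12\log P$: its principal part becomes $\tfrac{1}{\alpha}e^{(4-2r)s}\nabla \Delta w$, and the resulting $\nabla^{k-l+2}w$ contribution pairs against $\nabla^{k-l}\Psi$ and integrates by parts into precisely the third term $e^{(4-2r)s}|\nabla^{k-l} w|^2 P$ of $E_{k-l}$, using the $w$-equation derived from $\partial_s S$. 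The smallness of $e^{(4-2r)s_0}$ (valid since $r>2$) renders all new NLS-specific corrections perturbative, and the hierarchy $E_{0,0}\ll E_{1,0}\ll\dots\ll E_{k/10,0}\ll E$ absorbs level-$(k-l+1)$ error terms into the coarser energy $E_{k-l-1}$.

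The low-derivative weighted bounds \eqref{eq:bootstrap_Hm}--\eqref{eq:bootstrap_Hmw} close via an analogous weighted $H^{m'}$ estimate with the slow-growing weight $\beta \sim |y|^{1/10}$; the slow growth leaves room to absorb commutator errors and yields the decay $e^{-\epsilon(s-s_0)}$ for small $\epsilon$. The $L^\infty$ bounds \eqref{eq:bootstrap_low_der00}--\eqref{eq:bootstrap_low_der02} then follow from combining: (i) Lemma \ref{lemma:agreement} for agreement on $|y|\leq \tfrac{11}{10}C_0$ with the truncated equation; (ii) Duhamel's formula on \eqref{eq:truncated} together with Proposition \ref{prop:maxdis}, splitting into $P_{\rm{sta}}$ (exponentially decaying in $X$ by the semigroup bound) and $P_{\rm{uns}}$ (controlled by hypothesis), and Sobolev embedding $X \hookrightarrow L^\infty$ for $m$ large; and (iii) in the outer region, weighted Sobolev embedding from the $H^{m'}$ bound, using $S_d \gtrsim \langle y \rangle^{-r+1}$ from \eqref{eq:profiles_decay} to convert the weighted $L^2$ control of $\tilde S$ into the pointwise ratio bound \eqref{eq:bootstrap_low_der00}.

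The main obstacle will be the quantum-pressure management in $E_{k-l}$: the pure compressible-Euler energy of \cite{CaoLabora-GomezSerrano-Shi-Staffilani:nonradial-implosion-compressible-euler-ns-T3R3} cannot control the three-derivative quantum term without losing derivatives, while the classical NLS Hamiltonian energy is blind to the shock structure required for self-similar implosion. The combined energy, mixing $L^2(P)$ terms for $\nabla^{k-l-1}S$ and $\nabla^{k-l}\Psi$ with an NLS-type $L^2(P)$ term for $\nabla^{k-l} w$ carrying the small prefactor $e^{(4-2r)s}$, is what closes the argument without derivative losses. Verifying the required top-order cancellations in non-radial geometry and in dimension $d=8$, while simultaneously exploiting the new angular repulsivity \eqref{eq:angular_repulsivity} proved in the appendix, is where the bulk of the technical work lies.
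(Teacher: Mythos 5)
Your high-level outline of the weighted high-derivative energy estimate for $E_{k-l}$ and of the $\beta$-weighted $\dot H^{m'}$ estimate matches the paper's approach: the use of $w = \tfrac12\log P$ to cancel the top-order quantum-pressure contribution against the $w$-equation (the paper's $B_2 + B_3 = O(E_{l,0})$ cancellation, hinging on $2P\,\partial_i w = \partial_i P$), the repulsivity properties \eqref{eq:radial_repulsivity}--\eqref{eq:angular_repulsivity} supplying bulk coercivity, and the hierarchy $E_{0,0}\ll\cdots\ll E_{k/10,0}$ absorbing error terms are all correctly identified. The inner-region $L^\infty$ bound via Lemma~\ref{lemma:agreement}, Duhamel on the truncated system, Proposition~\ref{prop:maxdis}, and Sobolev embedding on $B(0,3C_0)$ is also the paper's argument. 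One minor imprecision: the coercive gain in the high-derivative estimate is proportional to $k-l$ (from the commutator of $\nabla^{k-l}$ with $-y\cdot\nabla$), not of order $r-1-l$.

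However, step (iii) of your $L^\infty$ argument contains a genuine gap. You claim that in the outer region $|y| \geq C_0$, ``weighted Sobolev embedding from the $H^{m'}$ bound, using $S_d \gtrsim \langle y\rangle^{-r+1}$,'' yields the pointwise ratio bound $|\tilde S/S_d| < \delta_{\rm{low}}$. This cannot work. Interpolating the $\beta^{m'}$-weighted $\dot H^{m'}$ bound against the $L^\infty$ bound on $\tilde S$ only gives $|\nabla^i\tilde S| \lesssim \delta_{\rm{low}}\,e^{-\eps(s-s_0)}\langle y\rangle^{-i/10}$; at $i=0$ there is no spatial decay at all. Since $S_d \sim \langle y\rangle^{-(r-1)}$ can be as small as $e^{-(r-1)s}$ on $e^s\mathbb{T}^d$, the ratio $|\tilde S|/S_d$ would be bounded only by $\delta_{\rm{low}}\,e^{-\eps(s-s_0)}\,e^{(r-1)s}$, which diverges. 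The paper instead closes \eqref{eq:bootstrap_low_der00} and \eqref{eq:bootstrap_low_der02} for $|y|\geq C_0$ by a trajectory (characteristics) argument: introducing $\omega_{\Psi,(y_\star,s_\star)}(s) = e^{(r-2)(s-s_\star)}\tilde\Psi(y_\star e^{s-s_\star},s)$ and $\omega_{S,(y_\star,s_\star)}(s) = e^{(r-1)(s-s_\star)}\tilde S(y_\star e^{s-s_\star},s)$, integrating $\partial_s\omega$ along the flow of $\partial_s + y\cdot\nabla$, exploiting the improved constant at boundary points (either $s_\star=s_0$ or $|y_\star|=C$), and using $S_d$ as a comparison quantity $\bar\omega_S$ so that $\omega_S/\bar\omega_S$ tracks $\tilde S/S_d$ exactly. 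Without this Lagrangian propagation argument, \eqref{eq:bootstrap_low_der00} cannot be closed from the weighted energy bounds alone, and the interpolation framework used elsewhere in the proof (which relies on \eqref{eq:bootstrap_low_der00} to bound $S$ in terms of $S_d$) collapses.
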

We first claim $\mathcal{E}_{s}$, $\mathcal{E}_{\Psi}$ satisfies following estimates:
\begin{lemma}
We have 
\begin{equation}\label{errores01}
|\mathcal{E}_{\Psi}|\lesssim \mathbbm{1}_{|y|\geq \frac{1}{2}e^{s}}\frac{1}{\langle y\rangle^{2(r-1)}};\quad |\mathcal{E}_{S}|\lesssim S_d\mathbbm{1}_{|y|\geq \frac{1}{2}e^{s}}\frac{1}{\langle y\rangle^{(r-1)}};
\end{equation}
\begin{equation}\label{errores03}
\|\nabla^{m'+1}\mathcal{E}_{\Psi}\beta^{m'}\|_{L^2}\leq e^{- s};\quad \|\nabla^{m'}\mathcal{E}_{S}\beta^{m'}\|_{L^2}\leq e^{- s};
\end{equation}
\begin{equation}\label{errores05}
\langle y \rangle^{-(r-1)}\lesssim S_d \lesssim \langle y \rangle^{-(r-1)}
\end{equation}
\begin{equation}\label{errores06}
e^{-(r-1)s}\lesssim S_d.
\end{equation}
\end{lemma}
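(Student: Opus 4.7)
The plan is to prove the six claims in the order \eqref{errores05}, \eqref{errores06}, \eqref{errores01}, \eqref{errores02}, \eqref{errores03}, \eqref{errores04}, since the later estimates lean on the first two and share structural ideas.

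For \eqref{errores05} and \eqref{errores06} I would argue by splitting into three regions, guided by the supports of the cutoffs in \eqref{cutoffprofile}. In the inner region $|y|\le e^{s}/8$, one has $\hat{\mathfrak{X}}_d(ye^{-s})=1$ and $\tilde{\mathfrak{X}}_d(ye^{-s})=0$, so $S_d=S_p$ and the two-sided bound is immediate from $S_p\gtrsim\langle R\rangle^{-(r-1)}$ and $S_p\lesssim\langle R\rangle^{-(r-1)}$ in \eqref{eq:profiles_decay}. In the outer region $|y|\ge e^{s}/4$, $\tilde{\mathfrak{X}}_d=1$ contributes $e^{-(r-1)s}$; because the torus $e^s\mathbb{T}^d$ forces $|y|\lesssim e^s$, this gives both $S_d\gtrsim e^{-(r-1)s}\gtrsim \langle y\rangle^{-(r-1)}$ and $S_d\lesssim \langle y\rangle^{-(r-1)}$. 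The intermediate transition region is handled by adding the two nonnegative contributions. Then \eqref{errores06} follows at once from \eqref{errores05} together with $\langle y\rangle\lesssim e^s$.

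For \eqref{errores01} I would inspect the final expression for $\mathcal{E}_\Psi$ in \eqref{errordefi}, noting that every surviving summand carries at least one of the factors $(\hat{\mathfrak{X}}_d-\hat{\mathfrak{X}}_d^2)$, $\nabla\hat{\mathfrak{X}}_d(ye^{-s})$, $\tilde{\mathfrak{X}}_d(ye^{-s})$ or $\tilde{\mathfrak{X}}_d^2(ye^{-s})$. All of these vanish for $|y|$ below a fixed fraction of $e^{s}$, giving the indicator. On the support $|y|\sim e^s$ I would plug in $|\nabla\Psi_p(y)|\lesssim \langle y\rangle^{-(r-1)}$ from \eqref{eq:profiles_decay}, the bound $|\Psi_p(y)|\lesssim \langle y\rangle^{-(r-2)}$ obtained by integrating $\nabla\Psi_p$ from infinity, and $|S_p|\lesssim\langle y\rangle^{-(r-1)}$. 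Each term then has two decay factors of combined strength $\langle y\rangle^{-2(r-1)}$; the term $\Psi_p^2|\nabla\hat{\mathfrak{X}}_d|^2 e^{-2s}$ uses $\Psi_p^2\lesssim \langle y\rangle^{-2(r-2)}\sim e^{-2(r-2)s}$ times $e^{-2s}$, yielding $e^{-2(r-1)s}\sim\langle y\rangle^{-2(r-1)}$ on the support. The proof of \eqref{errores02} is parallel from \eqref{error02}: each surviving term carries one factor of size $S_d$ (either $S_p\hat{\mathfrak{X}}_d$ or $e^{-(r-1)s}\tilde{\mathfrak{X}}_d$) and one other factor bounded by $\langle y\rangle^{-(r-1)}$ coming from $\nabla\Psi_p$, $\Delta\Psi_p$, or $\nabla S_p$, again using \eqref{eq:profiles_decay}.

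For the weighted Sobolev bounds \eqref{errores03}, \eqref{errores04} I would perform a derivative count. Each $y$-derivative falling on $\hat{\mathfrak{X}}_d(ye^{-s})$ or $\tilde{\mathfrak{X}}_d(ye^{-s})$ produces an $e^{-s}$, and each derivative falling on a profile $S_p$ or $\nabla\Psi_p$ produces a $\langle y\rangle^{-1}\sim e^{-s}$ on the annular support $|y|\sim e^s$. Combining with the pointwise bounds \eqref{errores01}, \eqref{errores02}, one gets $|\nabla^{m'+1}\mathcal{E}_\Psi|\lesssim e^{-(2(r-1)+m'+1)s}$ on a set of measure $\lesssim e^{ds}$, while $\beta^{m'}\lesssim e^{m's/10}$ there by \eqref{weightdefi}. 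The Plancherel-type bookkeeping gives
\begin{equation*}
\|\beta^{m'}\nabla^{m'+1}\mathcal{E}_\Psi\|_{L^2}^{2}\lesssim e^{\bigl(d-4r+2-\tfrac{9m'}{5}\bigr)s},
\end{equation*}
which is at most $e^{-2s}$ for $d=8$ and $r>2$ once $m'$ is taken sufficiently large (independent of $s_0$); absorbing the implicit constant using large $s_0$ delivers the stated $\le e^{-s}$. The analogous computation using \eqref{errores02} handles $\mathcal{E}_S$.

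The main obstacle I anticipate is not any single estimate but the bookkeeping in \eqref{errordefi}--\eqref{error02}, where one must verify that every a priori non-small term actually pairs with a cutoff factor producing either $e^{-s}$ or an extra power of $\langle y\rangle^{-(r-1)}$; the precise cancellations marked as $\underbrace{(\dots)}_{0}$ using \eqref{eq:SSprofiles} are essential, and without them the error terms would be too large by factors of $\langle y\rangle^{r-1}$ and would not close against the low-derivative bootstrap in \eqref{eq:bootstrap_Hm}.
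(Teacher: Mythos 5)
Your proposal is correct and follows essentially the same approach as the paper: deduce \eqref{errores01}, \eqref{errores05}, \eqref{errores06} directly from the cutoff structure of $S_d,\Psi_d$ and the profile decay \eqref{eq:profiles_decay}, then prove \eqref{errores03} by a Leibniz expansion on a model term, counting one factor of $e^{-s}$ per derivative on a cutoff and one factor of $\langle y\rangle^{-1}\sim e^{-s}$ per derivative on a profile, and integrating the weight over the annulus $|y|\sim e^s$. Your exponent bookkeeping $e^{s(d-4r+2-9m'/5)}$ for $\|\beta^{m'}\nabla^{m'+1}\mathcal{E}_\Psi\|_{L^2}^2$ is correct and reproduces the paper's conclusion (the paper's displayed computation omits the squaring but reaches the same end).
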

\begin{proof}
The estimates \eqref{errores01}, \eqref{errores05}, \eqref{errores06}  follow trivially from the definition of $\mathcal{E}_{S}$, $\mathcal{E}_{\Psi}$, \eqref{errordefi}, \eqref{error02},  and the cut-off definition \eqref{cutoffprofile}, \eqref{cutoffprofile02}. We will show the estimate \eqref{errores03} for the first term $|\nabla \Psi_{p}(y)|^2(\hat{\mathfrak{X}}_{d}(ye^{-s})-\hat{\mathfrak{X}}^{2}_{d}(ye^{-s}))$ in $\mathcal{E}_{\Psi}$ . Other terms can be estimated in a similar way.
In fact, we have
\begin{align*}
&|\nabla^{m'+1}(|\nabla \Psi_{p}(y)|^2(\hat{\mathfrak{X}}_{d}(ye^{-s})-\hat{\mathfrak{X}}^{2}_{d}(ye^{-s})))|\\
=&|\sum_{j}C_{m,j}\nabla^{j}(|\nabla \Psi_{p}|^2)\nabla^{m'+1-j}(\hat{\mathfrak{X}}_{d}(ye^{-s})-\hat{\mathfrak{X}}^{2}_{d}(ye^{-s}))|\\
\lesssim& \langle y\rangle ^{-2(r-1)-j}e^{-s(m'+1-j)}\mathbbm{1}_{e^s\leq |y|\leq 10e^{s}}.
\end{align*}
Thus we can bound
\begin{align*}
&\quad \|(\nabla^{m'+1}(|\nabla \Psi_{p}(y)|^2(\hat{\mathfrak{X}}_{d}(ye^{-s})-\hat{\mathfrak{X}}_{d}(ye^{-s})))\beta^{m'}\|_{L^2}\\
&\lesssim  \int_{e^{s}}^{10C_0 e^{s}}\langle y\rangle^{-2(r-1)-j}\langle y\rangle^{\frac{m'}{10}}\langle y \rangle ^{d-1} dy e^{-s(m'+1-j)}\\
&\lesssim e^{s(d-2(r-1)-j+\frac{m'}{10})}e^{-s(m'+1-j)}\\
&\lesssim e^{-s(m'+1-j-\frac{m'}{10}-d+2(r-1)+j)}\\
&\lesssim e^{-s}.
\end{align*}

\end{proof}
\subsection{Linear bounds in the $X$ space} \label{subsubsec:linear}
The objective of the subsection will be to show
\begin{equation} \label{eq:Hm_bound}
\| \nabla \tilde \Psi \|_{H^{m} (|y| \leq C_0)} + \| \tilde{S} \|_{H^{m} (|y| \leq C_0)} \leq \delta_{\rm{low}} e^{- \frac32 \eps (s-s_0)}.
\end{equation}

The last ingredient we need in order to show \eqref{eq:Hm_bound} is to bound $\mathcal F_t$ in $X$ norm. Interpolation Lemma \ref{lemma:GN_general01} (adapted trivially to dimension $d$ instead of $3$) in the case $p = q = r = \infty$ tells us that the high-derivative $L^2$ bounds \eqref{highordermestimate} together with \eqref{eq:bootstrap_low_der00} yield:

\begin{equation*}
|\nabla^i \tilde S | \les \frac{E^{i/J}}{\langle y \rangle^{ai}} \left(\delta_{\rm{low}} e^{-\eps (s-s_0)} \right)^{\frac{J-i}{J}} \qquad \mbox{ and }\qquad | \nabla^i \tilde \Psi | \les \frac{E^{i/J}}{\langle y \rangle^{ai}} \left(  \delta_{\rm{low}} e^{-\eps (s-s_0)} \right)^{\frac{J-i}{J}},
\end{equation*}
\begin{equation*}
|\nabla^i \tilde S | \les \frac{E^{i/J}}{\langle y \rangle^{ai}} \left( S_d \delta_{\rm{low}}  \right)^{\frac{J-i}{J}} \qquad \mbox{ and }\qquad | \nabla^i \tilde \Psi | \les \frac{E^{i/J}}{\langle y \rangle^{ai}} \left( \Psi_p \delta_{\rm{low}}  \right)^{\frac{J-i}{J}}
\end{equation*}
with $a=\frac{1}{10}.$

Let us recall that $X$ is the space where we measure the stable modes of $\tilde \Psi, \tilde S$ on a compact support, and it is in a lower regularity norm $\dot H^{m+1}_\Psi \times \dot H^m_S$, for $m \ll k$. Now assuming that $\delta_{\rm{low}} \ll \frac{1}{E^2}$ and that $k$ is much larger compared to the regularity of $X$ (space used to measure linear stability), we have that
\begin{equation} \label{eq:dumas1}
\| (\mathcal N_\Psi, \mathcal N_S )\|_X \les e^{-\frac74 \eps (s-s_0)} \delta_{\rm{low}}^{7/4}
\end{equation}

With respect to the dissipation $\nabla^m \left(\frac{\Delta ( S^{1/\alpha} )}{S^{1/\alpha}}\right)$, we start computing:
\begin{equation*}
\frac{\Delta (S^{1/\alpha} )}{S^{1/\alpha}} = \sum_i \frac{\p_i \left(\frac{1}{\alpha} S^{1/\alpha - 1} \p_i S \right)}{S^{1/\alpha}} =  \frac{\Delta S}{\alpha S} - \frac{ (\alpha -1) |\nabla S|^2 }{\alpha^2 S^{2}}
\end{equation*}

and therefore, we can bound 
\begin{equation*}
\left| \nabla^m \left(\frac{\Delta ( S^{1/\alpha} )}{S^{1/\alpha}} \right)\right| \les_m \sum_{|i| = m+2} \frac{| \nabla^{i_1} S  |}{S} \cdot  \frac{| \nabla^{i_2} S |}{S} \ldots  \frac{| \nabla^{i_{\ell(i)}} S |}{S} 
\end{equation*}
where $|i| = m+2$ refers to all multiindices with positive integer entries $i_j$ that add up $m+2$ and $\ell (i)$ denotes the length of the multiindex. 

From our bootstrap hypothesis $|\tilde S/S_d| < \delta_{\rm{low}} $ \eqref{eq:bootstrap_low_der00}, we know that $2S_d \geq S \geq S_d/2$. Moreover, interpolating $S \leq 2S_d$ with $| \nabla^J S | \les E / \langle y \rangle^{aJ}$,  we obtain that
\begin{align}
\left| \nabla^m \left(\frac{\Delta ( S^{1/\alpha} )}{S^{1/\alpha}} \right)\right| &\les_{E, m} \sum_{|i| = m+2}\frac{S_d^{1 - i_1/J} \langle y \rangle^{-ai_1}}{S_d} \cdot  \frac{S_d^{1 - i_2/J} \langle y \rangle^{-ai_2}}{S_d} \ldots \frac{S_d^{1 - i_{\ell(i)}/J} \langle y \rangle^{-ai_{\ell(i)}}}{S_d} \notag \\
&\les \frac{\langle y \rangle^{-a (m+2)}}{S_d^{(m+2)/J}} \label{eq:highder_D}
\end{align}

Taking $s_0$ sufficiently large (viscosity sufficiently small), we have that
\begin{equation} \label{eq:dumas2}
\| \mathcal D \|_{\dot H^{m+1}} \les   \delta_{\rm{low}}^{7/4}e^{(4-2r)(s-s_0)}
\end{equation}
as well. Lastly, note that $\mathcal E_\Psi$, $\mathcal E_S$ vanish in the region where $\mathfrak{X} = 1$, so in particular they do not contribute to $\mathcal F_t$ (since it comes with a factor $\chi_2 (|y|/C_0)$). Thus, from \eqref{eq:dumas1}--\eqref{eq:dumas2}, together with the fact that $\chi_2 \in \dot H^{m}$ and that $\dot H^m (\mathbb T )$ is an algebra (for sufficiently large $m$) we have that
\begin{equation} \label{eq:dumas3}
\| \mathcal F_t \|_X \les \delta_{\rm{low}}^{7/4}  e^{-\frac74 \eps (s-s_0)} 
\end{equation}
Now, taking a stable projection, and using that $\mathcal L_t$ is invariant on $V_{\rm{sta}}$, we have that
\begin{equation*}
(\p_s - \mathcal L_t) P_{\rm{sta}} (\tilde \Psi_t, \tilde S_t) = P_{\rm{sta}} \mathcal F_t,
\end{equation*}
Using Duhamel's principle and \eqref{eq:decomposition_condition}, we obtain
\begin{align} 
\left\| P_{\rm{sta}}(\tilde \Psi_t, \tilde S_t) \right\|_X  &\leq \left\|  e^{(s-s_0) \mathcal L_t}   (\Psi_{t, 0}, S_{t, 0} ) \right\|_X + \int_{s_0}^s \left\| e^{(s-s')\mathcal L_t} P_{\rm{sta}} \mathcal F_t ( s') \right\|_X ds' \notag \\
&\les  e^{-s\delta_g/2} \| (\Psi_{t, 0}, S_{t, 0} ) \|_X + \delta_{\rm{low}}^{\frac74} \int_{s_0}^s e^{\frac12 (s-s')\delta_g} e^{\frac{7}{4}\eps (s'-s_0)}  ds' \notag \\
 &\les e^{-\frac32 \eps (s-s_0)} \left( \| (\Psi_{t, 0}, S_{t, 0} ) \|_X  + \delta_{\rm{low}}^{\frac74}\right) \label{eq:hmart}
\end{align}
We also used that $P_{\rm{sta}} : V\to V_{\rm{sta}}$ is a bounded operator. This is trivial since $P_{\rm{sta}}f = f - \sum_{i=1}^N \langle \psi_i^\ast, f \rangle_X \psi_i$, where the $\psi_i, \psi_i^\ast$ are the  right  eigenfunctions (resp. left eigenfunctions) of $\mathcal L_t$ due to Proposition \ref{prop:maxdis}.

Now, recall that our initial data satisfies 
\begin{equation}
\| \chi_2 (|y| / C_0 ) ( \Psi_0, S_0 ) \|_X \leq \delta_{\rm{low}}^{7/4}.
\end{equation}
Therefore, we obtain that 
\begin{equation*}
\| (\Psi_t, S_t ) \|_X \leq \| P_{\rm{uns}}(\Psi_t, S_t) \|_X + \| P_{\rm{sta}}( \Psi_t, S_t ) \|_X \leq 2\delta_{\rm{low}}^{3/2} e^{-\frac32 \eps (s-s_0)}
\end{equation*}
where we recall that $\| P_{\rm{uns}}(\Psi_t, S_t) \|_X \leq \delta_{\rm{low}}^{3/2} e^{-\frac32 \eps (s-s_0)}$ by hypothesis. Lastly, using Lemma \ref{lemma:agreement} we conclude the desired bound \eqref{eq:Hm_bound}.

\subsection{Closing the low derivative energy bounds}\label{lowderivativeenergy}
First, let us make some observations regarding the parameters. We have that $\frac{1}{e^{s_0}} \ll \delta_{\rm{low}} \ll \frac{1}{m} \ll \frac{1}{C_0}$, where $C_0$ is the cut-off length for the stability (constant in time in self-similar variables) and $e^s$ is the cut-off length for the damping of $S_d$ (constant length $L$ in physical variables). The number $m$ corresponds to the number  of derivatives needed  for the linear stability. All those parameters are allowed to depend on $\eps, b, R_0, m'$, which are the parameters of the low derivative energy bounds. Those low derivative energy bounds just depend on the profile and the parameter. 

The objective of this and the next  section  will be to close the bootstrap assumptions at the middle and low   derivative levels. The ingredients needed to close the low derivative are using the linear stability analysis for $|y| \leq C_0$ and trajectory estimates outside. The low derivative energy bounds follow from the energy estimates reminiscent of the compressible Euler structure (in contrast with the high derivative bounds, in which one needs to carefully combine the compressible Euler structure with the NLS one).

Up to this point, we have proved \eqref{eq:Hm_bound}. This section is devoted to proving the low derivative energy estimate \eqref{eq:bootstrap_Hm}.

We prefer to work in $\tilde{U}, \tilde{S}$ variables and deduce $\dot H^{m'}$ bounds for both quantities. We obtain the equations
\begin{align}\label{perturbationeq}
\p_s \tilde U &= \underbrace{ -(r-1) \tilde U -y \nabla \tilde U - 2U_{d}\cdot \nabla \tilde U  - 2 \tilde U  \cdot \nabla U_{d}  - 2\alpha  S_d \nabla \tilde S  - 2\alpha \tilde S \nabla S_d  }_{\mathcal L_{U}} \\\nonumber
&\qquad \underbrace{ - 2\alpha  \tilde S \nabla \tilde S  - 2 \tilde U \nabla \tilde U }_{\mathcal N_{U}} + \nabla \mathcal E_\Psi  +
 e^{(4-2r)s}  \nabla \mathcal D \\\nonumber
 \p_s \tilde S  & = 
\underbrace{
-(r-1) \tilde S - y\nabla  \tilde S -2  \nabla S_d   \cdot \tilde U-2  \nabla  \tilde S \cdot U_{d}
-2  S_d \div (\tilde U)-2  \tilde S \div (U_{d}) }_{\mathcal L_{S}} \\\nonumber
&\qquad 
\underbrace{ -2  \nabla  \tilde S \cdot \tilde U-2  \tilde S \div ( \tilde U) }_{\mathcal N_{S}} + \mathcal E_S
\end{align}

Let

$$E_{\rm{low}} = \frac12 \left( \| \nabla^{m'}\tilde U (y, s)\beta(y)^{m'} \|_{L^2} + \| \nabla^{m'}\tilde S (y, s)\beta (y)^{m'} \|_{L^2}\right)$$
\begin{align*}
\p_s E_{\rm{low}} &= 
\int_{e^s\mathbb{T}^{d}}\nabla^{m'} \tilde U \left( \nabla^{m'} \mathcal L_U + \nabla^{m'} \mathcal N_U + \nabla^{m'+1} \mathcal E_\Psi + e^{(4-2r)s} \nabla^{m'+1} \mathcal D \right) \beta^{2m'} \\
&\qquad + \int_{e^s\mathbb{T}^{d}} \nabla^{m'} \tilde S \left( \nabla^{m'} \mathcal L_S + \nabla^{m'} \mathcal N_S  + \nabla^{m'} \mathcal E_S \right) \beta^{2m'}\\
&\quad+\frac{1}{2}\int_{\partial e^s\mathbb{T}^{d}} y\cdot \vec{n}(|\nabla^{m'}\tilde{U}|^2+|\nabla^{m'}\tilde{S}|^2)\beta^{2m'} \\
\end{align*}

We clearly have that 
\begin{equation} \label{eq:Hmprime_bounds_N}
\|\beta^{m'} \nabla^{m'}\mathcal N_U \|_{L^2} + \|\beta^{m'} \nabla^{m'}\mathcal N_S \|_{L^2} \les E_{\rm{low}}^{1/2}  \left( \| \tilde U \|_{W^{m'+1, \infty}} + \| \tilde S \|_{W^{m'+1, \infty}} \right) \les E_{\rm{low}}^{1/2} \delta_{\rm{low}}^{1/2} e^{-\frac12 \eps (s-s_0)}  
\end{equation}
using the interpolation lemma \ref{lemma:GN_general01} with $| \nabla^J \tilde U | $ and $| \nabla^J \tilde S |$ in order to bound the $W^{m' + 1, \infty}$ norm.

With respect to the dissipative term, we use Cauchy-Schwarz
\begin{equation} 
\int_{e^s\mathbb{T}^{d}}\nabla^{m'} \tilde U \nabla^{m'+1} \mathcal D \beta^{2m'}\les \| \nabla^{m'+1} \mathcal D \beta^{m'} \|_{L^2}  E_{\rm{low}}^{1/2}.
\end{equation}
We recall that 
\begin{equation*}
\mathcal D = e^{(4-2r)s} \frac{\Delta \sqrt{P}}{\sqrt{P}} = e^{(4-2r)s}\left( \frac{1}{2} \Delta \log (P) + \frac{|\nabla \sqrt{P}|^2}{P^2} \right) = e^{(4-2r)s} \left( \frac{1}{2} \Delta w  + | \nabla w |^2 \right)
\end{equation*}
Therefore
\begin{equation} \label{eq:mera1}
|\nabla^{m'+1} \mathcal D | \les_{m'} e^{(4-2r)s} \left( | \nabla^{m'+3} w| +  \sum_{i = 1}^{m'+2} | \nabla^i w | \cdot | \nabla^{m'+3-i} w | \right)
\end{equation}

Now, from our bootstrap assumptions (and recalling that $| S_d | \geq \langle y \rangle^{-n_d-(r-1)}$), we have that:
\begin{equation} \label{eq:sada}
\| w \langle y \rangle^{\eps} \|_{L^\infty} \les_{n_d} 1, \qquad \| \beta^{m'} \nabla^{m'-1} w \|_{L^2} \les_{m'} 1, \qquad \mbox{ and } \qquad \| \beta^{2m'} \nabla^{9k/10} w \|_{L^2} \leq E_{l, 0}
\end{equation}
Interpolation (for example, equation \eqref{eq:GNresultinfty}) yields:
\begin{equation*}
| \nabla^j w | \les_{m, n_d} 1, \mbox{ and } | \nabla^{m'+3-j}  w | \les_{m, n_d} E_{l, 0}, \qquad \text{ for } j \in \{ 1, 2, 3, 4 \}
\end{equation*}
and also for $j=0$ in the case of the late equation. Combining this with \eqref{eq:mera1}, we have that
\begin{equation} \label{eq:mera2}
|\nabla^{m'+1} \mathcal D | \les_{m'} e^{(4-2r)s} \left(| \nabla^{m'+3} w| + \sum_{j = 5}^{m'-2} | \nabla^j w | \cdot | \nabla^{m'+3-j} w | \right)
\end{equation}

Now, we interpolate the first to equations of \eqref{eq:sada} via \eqref{eq:GNresult}. We take parameters $p = \infty$, $q = 2$, $i = j$, $m = m'+3$, $\theta_1 = \frac{4}{m'-1-d/2} + \frac{j-4}{m'-5} \cdot\frac{m'-5-d/2}{m'-1-d/2}$, which is between $\frac{j}{m'-1}$ and $1$ for $4\leq j \leq m'-1$, and the corresponding $\bar r_1$. With respect to the term with $m'+3-j$ derivatives, we correspondingly take $i = m'+3-j$ together with $\theta_2 = \frac{4}{m'-1-d/2} + \frac{m'-1-j}{m'-5} \cdot\frac{m'-5-d/2}{m'-1-d/2}$, which is analogously between $\frac{m'+3-j}{m'-1}$ and $1$. We obtain that 
\begin{equation*}
\left\| \beta^{m'} | \nabla^j w | \cdot | \nabla^{m'+3-j} w | \right\|_{L^2} \leq \left\| \beta^{m' \theta_1 + \eps} | \nabla^j w | \right\|_{L^{\bar r_1}} \cdot \left\| \beta^{m' \theta_2 + \eps} | \nabla^j w | \right\|_{L^{\bar r_2}} \les_{m'} E_{l, 0}^2,
\end{equation*}
\begin{equation*}
\left\| \beta^{m'} |\nabla^{m'+3} w|\right\|_{L^2}\lesssim_{m'}E_{l,0}^{2}.
\end{equation*}
Plugging this into \eqref{eq:mera2}, we deduce that
\begin{equation}
\int_{e^s\mathbb{T}^{d}} \nabla^{m'} \tilde U \nabla^{m'+1} \mathcal D \beta^{2m'} \les E_{\rm{low}}^{1/2} e^{-\delta_{\rm{dis}}s/2},
\end{equation}
taking $s_0$ sufficiently large with respect to $E_{l, 0}$, $m$, $n_d$.
Taking $s_0$ to be sufficiently large, and $\eps$ smaller than $\frac25 (2r-4)$ we conclude
\begin{equation} 
\int_{e^s\mathbb{T}^{d}}\nabla^{m'} \tilde U \nabla^{m'+1} \mathcal D \beta^{2m'}\les \delta_{\rm{low}}^{5/2} e^{-\frac52 \eps(s-s_0)}.
\end{equation} 

Let us note that, from \eqref{errores03},
\begin{equation*}
\int\beta^{2m'} \left(   \nabla^{m'}  \tilde U \cdot \nabla^{m'+1} \mathcal E_\Psi +  \nabla^{m'} 
 \tilde S \cdot \nabla^{m'} \mathcal E_S \right) \les e^{-s} E_{\rm{low}}^{1/2} \les \frac{1}{e^{s_0}} e^{-\frac52 \eps (s-s_0)}
\end{equation*}

Therefore, we have that
\begin{equation*}
\left| \p_s E_{\rm{low}}  -\frac{1}{2}\int_{\partial e^s\mathbb{T}^{d}} y\cdot \vec{n}(|\nabla^{m'}\tilde{U}|^2+|\nabla^{m'}\tilde{S}|^2)\beta^{2m'}- \int_{e^s\mathbb{T}^{d}}\beta^{2m'} \left( \nabla^{m'} \tilde U \cdot \nabla^{m'} \mathcal L_U + \nabla^{m'} \tilde S \cdot \nabla^{m'} \mathcal L_S \right) \right| \les  \delta_{\rm{low}}^{5/2}  e^{-\frac52 \eps (s-s_0)} 
\end{equation*}
where we recall $\frac{1}{e^{s_0}} \ll \delta_{\rm{low}}^2$.

With respect to the term with $\mathcal L_U$ or $\mathcal L_S$, with $U_{d}=\nabla \Psi_{d}$, we have
\begin{align*}
&\quad\int_{e^s\mathbb{T}^{d}}\left( \nabla^{m'} \mathcal L_U \cdot \nabla^{m'} \tilde U + \nabla^{m'} \mathcal L_S \cdot \nabla^{m'} \tilde S \right) \beta^{2m'}
\leq C E_{\rm{low}} + C_{m'} E_{\rm{low}}^{1/2} \sum_{i=0}^{m'-1} \left[ \int_{e^s\mathbb{T}^{d}}\left( | \nabla^{i} \tilde U |^2 + |\nabla^{i} \tilde S|^2 \right) \frac{\beta^{2m'}}{\langle y \rangle^{2(m'-1-i)} } \right] \\
&\quad+ \int_{e^s\mathbb{T}^{d}}\nabla^{m'} \tilde U \left( -y\nabla^{m'+1} \tilde U - 2U_{d}\nabla^{m'+1} \tilde U  - 2\alpha S_d \nabla^{m'+1} \tilde S - m' \nabla^{m'} \tilde U - 2 m'\nabla U_{d}\nabla^{m'} \tilde U  - 2m'\alpha \nabla S_d \nabla^{m'} \tilde S  \right)  \beta^{2m'} \\ 
&\quad + \int_{e^s\mathbb{T}^{d}}\nabla^{m'} \tilde S \left( -y\nabla^{m'+1} \tilde S - 2U_{d}\nabla^{m'+1} \tilde S  - 2\alpha S_d \nabla^{m'} \div  (\tilde U ) -m' \nabla^{m'} \tilde S - 2 m' \nabla U_{d}\nabla^{m'} \tilde S  - 2 m' \alpha \nabla S_d \nabla^{m'-1} \div  (\tilde U ) \right)  \beta^{2m'}\\
&\leq C' E_{\rm{low}} + C_m E_{\rm{low}}^{1/2}  \sum_{i=0}^{m'-1} \left[ \int_{e^s\mathbb{T}^{d}}\left( | \nabla^{i} \tilde U |^2 + |\nabla^{i} \tilde S|^2 \right) \frac{\beta^{2m'}}{\langle y \rangle^{2(m'-1-i)} } \right]\\ 
 &\quad -m' \int_{e^s\mathbb{T}^{d}}\nabla^{m'} \tilde U \left( \nabla^{m'} \tilde U + 2 \nabla U_{d}\nabla^{m'} \tilde U  + 2\alpha \nabla S_d \nabla^{m'} \tilde S  \right)  \beta^{2m'} \\ 
&\quad -m' \int_{e^s\mathbb{T}^{d}}\nabla^{m'} \tilde S \left(   \nabla^{m'} \tilde S + 2  \nabla U_{d}\nabla^{m'} \tilde S  + 2  \alpha \nabla S_d \nabla^{m'-1} \div  (\tilde U ) \right)  \beta^{2m'}\\
&\quad+ \int_{e^s\mathbb{T}^{d}}( |\nabla^{m'} \tilde U|^2 + | \nabla^{m'} \tilde S|^2 ) \beta^{2m'} \left( \frac{\div (y \beta^{2m'} )}{2\beta^{2m'}} + \frac{\div (U_p \beta^{2m'})}{\beta^{2m'}}\right) - 2 \alpha \int_{e^s\mathbb{T}^{d}}\beta^{2m'} S_d \left( \nabla^{m'+1} \tilde S \nabla^{m'} \tilde U + \nabla^{m'} \tilde S \nabla^{m'} \div ( \tilde U )\right) \\
&\quad-\frac{1}{2}\int_{\partial e^s\mathbb{T}^{d}} y\cdot \vec{n}(|\nabla^{m'}\tilde{U}|^2+|\nabla^{m'}\tilde{S}|^2)\beta^{2m'}\\
& =: \gimel-\frac{1}{2}\int_{\partial e^s\mathbb{T}^{d}} y\cdot \vec{n}(|\nabla^{m'}\tilde{U}|^2+|\nabla^{m'}\tilde{S}|^2)\beta^{2m'}
\end{align*}
In the last inequality we are using that for a scalar function $g$ and a vector function $F$, 
\begin{equation*}
\nabla^{j} (F \cdot \nabla g) \nabla^j g = \sum_{i=1}^{d} \sum_{\gamma \in [d]^j} \p_{\gamma} (F_i \p_i g) \p_\gamma g = F \cdot \nabla^{j+1} g \nabla^j g + \sum_{i=1}^d \sum_{k = 1}^j \sum_{\gamma \in [d]^j } \p_{\gamma_k} F_i \p_{\gamma^{(j)}} \p_i g \p_\gamma g + \text{l.o.t.}
\end{equation*}
where $[d]^j = \{ 1, 2, \ldots d \}^j$ is the set of $j$-multiindices indicating $j$ directions, $\gamma_k$ denotes the $k$-th direction and $\gamma^{(k)}$ denotes the $(j-1)$-multiindex formed by all directions except the $k$-th one. Moreover, $\text{l.o.t.}$ denotes all the terms with less than $j$ derivatives falling on $g$. The terms with $j$ derivatives on $g$ add up
\begin{equation*}
 \sum_{k = 1}^j  \sum_{i=1}^d \sum_{i' = 1}^d \sum_{\tilde \gamma \in [d]^{j-1} } \p_{i'} F_i \p_i \p_{\tilde \gamma} g \p_{i'} \p_{\tilde \gamma} g  = j \sum_{i, i' = 1}^d \p_{i'} F_i \p_i \nabla^{j-1} g \p_{i'} \nabla^{j-1} g = j \left( \nabla F \cdot \nabla^j g\right)\nabla^j g
\end{equation*}
Therefore, when we write the expression $\nabla F \cdot \nabla^j g$ it should be always understood that the subindex from $F$ is contracted with the first $\nabla$ in $\nabla^j$, and the other $j$ derivative indices (one hitting $F$ and $j-1$ hitting $g$) are left free, so that $\nabla F \cdot \nabla^j g$ is a $j$-th order tensor. Let us also note that in the case of $F$ being a radially symmetric vector and $g, h$, being any scalar, letting $F_R$ be the radial component of $F$, we have :
\begin{align}\label{eq:radialcomp}
\nabla h \cdot \nabla F \cdot \nabla g &= \sum_{i, j = 1}^d \p_i h \p_i \left( \frac{F_R y_j}{|y|} \right) \p_j g = \sum_{i, j = 1}^d  \p_i h \left[ \left( \p_R F_R - \frac{F_R}{R} \right)  \frac{y_i y_j }{R^2}  + \delta_{i, j} \frac{F_R}{R} \right] \p_j  g \notag \\
& = \left( \p_R F_R - \frac{F_R}{R} \right) \p_R h \p_R g + \frac{F_R}{R} \nabla h\cdot \nabla g = \p_R F_R \p_R h \p_R g + \frac{F_R}{R} \nabla_\theta g \cdot \nabla_\theta h
\end{align}
where $\nabla_\theta$ refers to all the components of $\nabla$ orthogonal to the radial direction.

We continue the computation of $\gimel$. Since $U_d$ We have that
\begin{align*}
\gimel &= C' E_{\rm{low}} + C_{m'} E_{\rm{low}}^{1/2}  \sum_{i=0}^{m'-1} \left[ \int_{e^s\mathbb{T}^{d}}\left( | \nabla^{i} \tilde U |^2 + |\nabla^{i} \tilde S|^2 \right) \frac{\beta^{2m'}}{\langle y \rangle^{2(m'-1-i)} } \right] \\
&\qquad
 -m' \int_{e^s\mathbb{T}^{d}}\left( | \nabla^{m'}  \tilde U |^2  + 2 \p_RU_{d}| \p_R \nabla^{m'-1} U|^2 + 2 \frac{U_d}{R} |\nabla_\theta \nabla^{m'-1} U|^2   \right) \beta^{2m'} \\ 
 &\qquad
 -m' \int_{e^s\mathbb{T}^{d}}\left( | \nabla^{m'}  \tilde S |^2  + 2 \p_RU_{d}| \p_R \nabla^{m'-1} S|^2 + 2 \frac{U_d}{R} |\nabla_\theta \nabla^{m'-1} S|^2   \right) \beta^{2m'} \\ 
& \qquad + 2\alpha m' \int_{e^s\mathbb{T}^{d}}|\p_R S_d| \beta^{2m'} (| \nabla^{m'} \tilde U|^2 + | \nabla^{m'} \tilde S|^2 )  \\
& \qquad + \int_{e^s\mathbb{T}^{d}}( |\nabla^{m'} \tilde U|^2 + | \nabla^{m'} \tilde S|^2 ) \left(\frac{ \div (y \beta^{2m'}) }{2} +\div (U_p \beta^{2m'})\right) + 2 \alpha \int_{e^s\mathbb{T}^{d}}\nabla (\beta^{2m'} S_d) \cdot \left( \nabla^{m'} \tilde S \nabla^{m'} \tilde U\right) \\
&\leq C' E_{\rm{low}} + C_{m'} E_{\rm{low}}^{1/2} \sum_{i=0}^{m'-1} \left[ \int_{e^s\mathbb{T}^{d}}\left( | \nabla^{i} \tilde U |^2 + |\nabla^{i} \tilde S|^2 \right) \frac{\beta^{2m'}}{\langle y \rangle^{2(m'-i-1)} } \right]\\
&\qquad 
 -m' \int_{e^s\mathbb{T}^{d}}\left( |\p_R \nabla^{m'-1} \tilde U|^2 + | \p_R\nabla^{m'-1} \tilde S|^2 \right) \left( 1 + 2 \p_RU_{d}- 2\alpha |\p_R S_d |  \right)  \beta^{2m'} \\ 
 &\qquad 
 -m' \int_{e^s\mathbb{T}^{d}}\left( |\nabla_\theta \nabla^{m'-1} \tilde U|^2 + | \nabla_\theta \nabla^{m'-1} \tilde S|^2 \right) \left( 1 + 2 \frac{U_d}{R} - 2\alpha |\p_R S_d |  \right)  \beta^{2m'} \\ 
& 
\qquad + m'\int_{e^s\mathbb{T}^{d}}( |\nabla^{m'} \tilde U|^2 + | \nabla^{m'} \tilde S|^2 ) \beta^{2m'} \left(y \cdot \frac{\nabla \beta}{2\beta} +U_{d}\cdot \frac{2\nabla \beta}{\beta} + \alpha S_d \frac{2 |\nabla \beta |}{\beta }\right) 
\end{align*}

Finally, let $$\Xi = \min \{  1 + 2 \p_R U_p - 2\alpha |\p_R S_d |,  1 + 2 \frac{U_p}{R} - 2\alpha |\p_R S_d | \} $$ and recall that $\min \Xi > 0$ due to the properties of the profile. Note also that $\Xi$ tends to $1$ at infinity. Thus
\begin{align*}
 \p_s E_{\rm{low}}  &\leq  \delta_{\rm{low}}^{5/2}  e^{-\frac52 \eps (s-s_0)}  +  C' E_{\rm{low}}+  C_{m'} E_{\rm{low}}^{1/2} \sum_{i=0}^{m'-1} \left[ \int \left( | \nabla^{i} \tilde U |^2 + |\nabla^{i} \tilde S|^2 \right) \frac{\beta^{2m'}}{\langle y \rangle^{2(m'-i-1)} } \right] \\
& \qquad + m' \int ( |\nabla^{m'} \tilde U|^2 + | \nabla^{m'} \tilde S|^2 ) \beta^{2m'} \left( - \Xi  + \left( 1  + \frac{2 U_p}{R}   + 2\alpha \frac{S_d}{R} \right)  \frac{R \nabla \beta}{\beta}\right) 
\end{align*}

Now, we pick $\eps$ sufficiently small so that $2\eps \leq \min \Xi$. Then, since $\Xi \to 1$, $U_p/R \to 0$, $S_d/R \to 0$, we can pick $R_0$ sufficiently large so that the right parenthesis is always bigger than $3\eps$.

We also bound
\begin{equation*}
\int_{e^s\mathbb{T}^{d}}\left( | \nabla^{i} \tilde U |^2 + |\nabla^{i} \tilde S|^2 \right) \frac{\beta^{2m'}}{\langle y \rangle^{2(m'-i-1)} } \les \delta_{\rm{low}}^\theta  e^{-\theta \eps (s-s_0) }E_{\rm{low}}^{\frac{1-\theta}{2}}
\end{equation*}
by interpolation via Lemma \ref{lemma:GN_general01} in the appendix  with the substitutions  $(f,m,p,q,i)=(\tilde{U}, m',2,\infty, i)$, and $(f,m,p,q,i)=(\tilde{S}, m',2,\infty, i)$   for some $\theta \geq \frac{1}{2m'}$. Since $\delta_{\rm{low}} \ll E_{\rm{low}}$, we obtain 
\begin{equation*}
\p_s \left( E_{\rm{low}} e^{2\eps (s-s_0)}\right) \leq \delta_{\rm{low}}^{5/2} e^{-\frac12 \eps (s-s_0)} + C \left(  E_{\rm{low}} e^{2\eps (s-s_0)} \right) + C_{m'} \sup_{\theta \in [1/(2m'), 1]}\delta_{\rm{low}}^\theta E_{\rm{low}}^{1-\theta/2} e^{2\eps (1-\theta/2) (s-s_0)} - \eps m'E_{\rm{low}}e^{2\eps (s-s_0)} 
\end{equation*}
Letting $\bar E_{\rm{low}} =  E_{\rm{low}} e^{2\eps (s-s_0)}$, and picking $m'$ sufficiently large so that $\eps m'/2 > C$ we get
\begin{equation*}
\p_s \bar E_{\rm{low}}  \leq \delta_{\rm{low}}^{5/2} e^{-\frac12 \eps (s-s_0)} + C_{m'} \sup_{\theta \in [1/(2m'), 1]}\delta_{\rm{low}}^\theta \bar E_{\rm{low}}^{1-\theta/2} - \frac{\eps m'}{2} \bar E_{\rm{low}} 
\end{equation*}
Let us now suppose by contradiction that there exists a first time $s'$ at which $\bar E_{\rm{low}}^{1/2} = K_{m'} \delta_{\rm{low}}$ for $K_{m'} = 2 \langle C_{m'} \rangle^{2m'}$ (the reason of this choice will be clear after the computation). Therefore at $s=s'$ we would have that $\p_s \bar E_{\rm{low}} \geq 0$, so that 
\begin{align*}
0 &\leq \delta_{\rm{low}}^{5/2} e^{-\frac12 \eps (s-s_0)} + C_{m'} \delta_{\rm{low}}^{1/(2m')} \left(  K^2 \delta_{\rm{low}} \right)^{2-2/(4m')} - \frac{ \eps m'}{2} K^2 \delta_{\rm{low}}^2 \\
&\leq  \delta_{\rm{low}}^2 \left(1 + C_{m'} K^{1 - 1/(2m')} - \frac{\eps m'}{2} K \right)
\end{align*}
Taking $m'$ to be larger than $\frac{2}{\eps}$ (this choice only depends on the profiles) and then taking $K_{m'} = 2 \langle C_{m'} \rangle^{2m'}$, the last term is dominant in the parenthesis above and we get a contradiction. This shows that $\bar E_{\rm{low}}^{1/2} \leq K_{m'} \delta_{\rm{low}}$ for all $s \in [s_0, s_1]$, concluding the desired bound
\begin{equation*}
E_{\rm{low}}^{1/2} \les_{m'} \delta_{\rm{low}} e^{-\eps (s-s_0)}.
\end{equation*}

\subsection{Closing the low derivative energy bounds for $w$}

Let us recall the equation satisfied by $w$:
\begin{equation*}
    \p_s w = -2\nabla w \cdot U - y\nabla w  - \text{div}(U) + \frac{1-r}{2\alpha}
\end{equation*}
Letting $E_w = \int \beta^{2m'} | \nabla^{m'-1} w |^2$, we have that
\begin{align*}
\frac{1}{2} \p_s E_w &= -\int_{e^s\mathbb{T}^{d}} \beta^{2m'} U \cdot \nabla \left( |\nabla^{m'-1} w|^2 \right) - 2(m'-1) \int_{e^s\mathbb{T}^{d}} \beta^{2m'} \nabla^{m'-1} w \cdot \nabla U \cdot \nabla^{m'-1} w \\
&\quad - (m'-1) \int_{e^s\mathbb{T}^{d}} | \nabla^{m'-1} w |^2 \beta^{2m'} - \frac12 \int \beta^{2m'} y\cdot \nabla \left( | \nabla^{m'-1} w |^2 \right)\\
&\quad + O(E_{\rm{low}}^{1/2} E_w^{1/2} )+ E_w^{1/2}  \cdot O_{m'} \left( \sum_{j=1}^{m'-2} \int \beta^{2m'} | \nabla^j w |^2 |\nabla^{m'-j} U |^2 \right)^{1/2}+\frac{1}{2}\int_{e^s\mathbb{T}^{d}} y\cdot \vec{n}(|\nabla^{m'-1}w|^2)\beta^{2m'} \\
&=  (m'-1)\int_{e^s\mathbb{T}^{d}}  \beta^{2m'} \left( -1 + \frac{\nabla \beta \cdot (y+2U_p)}{\beta} \right) |\nabla^{m'-1} w|^2  -  \int_{e^s\mathbb{T}^{d}} 2\beta^{2m'} \left( |\p_R \nabla^{m'-2} w|^2 \p_R U_p + | \nabla_\theta \nabla^{m'-2} w|^2 \frac{U_p}{R} \right)\\
&\quad  + O( E_w )+ E_w^{1/2}  \cdot O_{m'} \left( \sum_{j=1}^{m'-2} \int_{e^s\mathbb{T}^{d}} \beta^{2m'} | \nabla^j w |^2 |\nabla^{m'-j} U |^2 \right)^{1/2} \\
\end{align*}
where we used \eqref{eq:radialcomp} in order to rewrite the $\nabla^{m'-1}w\cdot \nabla U \cdot \nabla^{m'-1} w$ term.

 Now, using the profile properties in the same way as we did for $E_{\rm{low}}$ (see subsection \ref{lowderivativeenergy}), we see that $m'$ sufficiently large ensures
\begin{equation*}
\p_s E_w \leq - E_w + E_w^{1/2}  \cdot O_{m'} \left( \sum_{j=1}^{m'-2} \int_{e^s\mathbb{T}^{d}}  \beta^{2m'} | \nabla^j w |^2 |\nabla^{m'-j} U |^2 \right)^{1/2}
\end{equation*}
Lastly, interpolation yields
\begin{equation} \label{eq:girona}
\p_s E_w \leq -E_w +  C(m') E_w^{1-1/(2m')},
\end{equation}
since any bound on $\nabla^i U$ follows from the $E_{\rm{low}}$ assumption and the fact that the profile is smooth. We see that there exists some sufficiently large constant $C'(m')$ such that $E_w = C'$ makes the right hand side of \eqref{eq:girona} negative, and therefore we have a uniform bound $E_w \leq C'(m')$ as we wanted.

\subsection{Closing the low derivative $L^{\infty}$ estimate}
The direct control of the $X$ norm \eqref{eq:Hm_bound} ($\dot H^m$ in a compact domain) closes our bootstrap assumption for $|y| \leq C_0$, so we only need to close the estimates of $|\tilde S / S_d|, |\tilde \Psi / \Psi_p|$, and $\tilde{S}$, $\tilde{\Psi}$ in the region $|y| \geq C_0$. Moreover, the control in the $\dot H^{m'}$ norm gives us control in $L^\infty$ with constant $\delta_{\rm{low}}$ by the Sobolev embedding.
\begin{lemma}\label{lemmalinfinity}
Under the bootstrap assumptions \eqref{eq:bootstrap_low_der00}, \eqref{eq:bootstrap_low_der02}, \eqref{eq:bootstrap_Hm}, \eqref{highordermestimate}, we have
\begin{equation} \label{eq:bootstrap_low_der*}
\left| \frac{\tilde S}{S_d} \right| < \frac{1}{2}\delta_{\rm{low}}, 
\end{equation}
and
\begin{equation} \label{eq:bootstrap_low_der02*}
\left| \tilde S \right| <\frac{1}{2} \delta_{\rm{low}} e^{-\eps (s-s_0)}, \qquad \mbox{ and }\qquad \left|\tilde \Psi\right| < \frac{1}{2}\delta_{\rm{low}} e^{-\eps (s-s_0)}
\end{equation}
\end{lemma}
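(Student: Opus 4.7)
The plan is to split the analysis at the cutoff radius $C_0$. In the near region $|y| \le C_0$, Sobolev embedding applied to the $X$-norm control used to prove \eqref{eq:Hm_bound} (with $m$ taken sufficiently large depending on $d$) yields
\begin{equation*}
\| \tilde \Psi \|_{L^\infty(|y| \le C_0)} + \| \tilde S \|_{L^\infty(|y| \le C_0)} \les \delta_{\rm{low}} e^{-\tfrac{3}{2}\eps (s-s_0)},
\end{equation*}
which is strictly smaller than $\tfrac{1}{2}\delta_{\rm{low}} e^{-\eps(s-s_0)}$ once $\delta_{\rm{low}}$ is small. Taking $C_0 \le \tfrac{1}{10}e^{s_0}$ ensures $S_d = S_p \gtrsim 1$ uniformly on $|y| \le C_0$, so $|\tilde S/S_d|$ obeys the same bound.

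For the far region $|y| \ge C_0$, I will propagate $L^\infty$ control along the characteristic flow $\dot y = y + 2\nabla \Psi_d(y)$. By \eqref{eq:profiles_decay}, $|\nabla \Psi_d| \ll |y|$ for $|y|$ large, so the flow is strictly outgoing, and any backward trajectory from $(y,s)$ with $|y(s)| \ge C_0$ either exits into $|y| < C_0$ at some intermediate time (where Step~1 applies) or stays in the far region all the way back to $s = s_0$ (where \eqref{initialcondition} applies). Along these characteristics one reads off
\begin{equation*}
\tfrac{d}{ds} \tilde \Psi = -(r-2)\, \tilde \Psi + F_\Psi, \qquad \tfrac{d}{ds} \tilde S = -(r-1)\,\tilde S + F_S,
\end{equation*}
with damping coefficients $r-2, r-1 > \eps$. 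The forcings $F_\Psi, F_S$ collect the linear cross terms ($S_d\tilde S$, $\nabla S_d\cdot \nabla\tilde\Psi$, $S_d\Delta\tilde\Psi$, $\tilde S\Delta\Psi_p$), the nonlinearities $\mathcal N_\Psi,\mathcal N_S$, the cutoff errors $\mathcal E_\Psi,\mathcal E_S$, and the quantum pressure $e^{(4-2r)s}\mathcal D$. Each is small because of an ingredient already in place: the linear and nonlinear terms are bounded by Gagliardo--Nirenberg interpolation between the bootstrap $L^\infty$ control and the high-derivative energies \eqref{highordermestimate}, exactly as in Subsection \ref{subsubsec:linear}; $\mathcal E_\Psi, \mathcal E_S$ are supported on $|y| \gtrsim e^s$ with explicit decay \eqref{errores01}; and $e^{(4-2r)s}\mathcal D$ is controlled via \eqref{eq:highder_D} combined with \eqref{eq:bootstrap_Hmw} and the decaying prefactor $e^{(4-2r)s}$ (since $r>2$). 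Grönwall then gives $|\tilde\Psi(y,s)|, |\tilde S(y,s)| \le C\,\delta_{\rm{low}}^{7/4}\,e^{-\eps(s-s_0)}$ with $C$ independent of $\delta_{\rm{low}}$, and the improvement follows for $\delta_{\rm{low}}$ small.

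The main obstacle is the ratio bound $|\tilde S/S_d|<\tfrac{1}{2}\delta_{\rm{low}}$ in the far region, because $S_d$ decays as $\langle y\rangle^{-(r-1)}$ (and as $e^{-(r-1)s}$ near $|y|\sim e^s$), so naïvely dividing the $\tilde S$ bound by $S_d$ grows in both $y$ and $s$. The structural fact to exploit is that $S_p$ itself solves the profile equation \eqref{eq:SSprofiles}: writing $q = \tilde S/S_d$, the quotient rule produces a transport equation along $\dot y = y+2\nabla\Psi_d$ in which the damping $-(r-1)q$ coming from $-(r-1)\tilde S/S_d$ cancels exactly against the term $-q(y+2\nabla\Psi_d)\cdot\nabla S_d/S_d = +(r-1)q + 2\alpha q\Delta\Psi_p$ produced by the quotient rule (in the region where $S_d = S_p$), leaving only an $O(\langle y\rangle^{-(r+1)})q$ linear residue by \eqref{eq:profiles_decay}. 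The forcing for $q$ then consists of $\nabla\log S_d\cdot\nabla\tilde\Psi$, $\Delta\tilde\Psi$, and $(\mathcal N_S+\mathcal E_S+e^{(4-2r)s}\mathcal D)/S_d$, all controlled by the same interpolation scheme together with $\nabla\log S_d=O(1/\langle y\rangle)$ (from \eqref{errores05}) and the lower bound \eqref{errores06} in the far-far region $|y|\sim e^s$ where the cutoff errors live. Integrating along outgoing characteristics and using either the initial condition $\|\tilde S_0/S_d\|_{L^\infty}\le \delta_{\rm{low}}/2$ from \eqref{initialcondition} or the $|y|=C_0$ boundary control from Step~1 then yields the required improvement.
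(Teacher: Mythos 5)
Your proposal is correct and follows essentially the same strategy as the paper's proof: Sobolev embedding of the $X$-norm bound \eqref{eq:Hm_bound} handles the near region $|y|\le C_0$, while a trajectory argument in the far region integrates the $\tilde\Psi,\tilde S$ equations along outgoing characteristics, with the profile equation \eqref{eq:SSprofiles} supplying the cancellation that makes the ratio $\tilde S/S_d$ well behaved despite the decay of $S_d$. The only differences are cosmetic: you transport along $\dot y = y + 2\nabla\Psi_d$, whereas the paper uses the pure scaling flow $\dot y = y$ and throws the $2\nabla\Psi_p\cdot\nabla$ terms into the forcing (the paper then packages the profile-equation cancellation as the estimate $\p_s\bar\omega_S/\bar\omega_S = O(\langle y\rangle^{-r})$ for the transported profile $\bar\omega_S$). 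Also, in your quotient-rule computation the putative residue $2\alpha q\Delta\Psi_p$ actually cancels exactly against the $-2\alpha\tilde S\Delta\Psi_p/S_d$ term already present in the $\tilde S$ equation, so there is no $O(\langle y\rangle^{-(r+1)})q$ linear residue at all in the region $S_d=S_p$ — though since such a residue would in any case be integrable along outgoing characteristics, this inaccuracy is harmless.
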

\begin{proof}
First of all, by taking $(r,p,q, i,m,\phi)=(\infty,\infty,2,i,m',\beta)$ in interpolation lemma \ref{lemma:GN_general01}, we have that 
\begin{equation*}
\|\beta \nabla \tilde{U}\|_{L^{\infty}} +\|\beta \nabla \tilde{\Psi}\|_{L^{\infty}} \lesssim_{m'}\delta_{\rm{low}}e^{-\epsilon(s-s_0)}.
\end{equation*}
Thus we obtain
\begin{align}\nonumber
| \mathcal N_S | &\les_{m'} | \nabla \tilde S | | \tilde U| + |  \tilde S | | \nabla \tilde U | \les \delta_{\rm{low}}^{3/2}   S_d e^{-\eps (s-s_0)}\langle y\rangle^{-\frac{1}{10}}, \\
| \mathcal N_{\Psi} | &\les_{m'}  | \tilde S |^2 + |  \nabla \tilde \Psi |^2   \les \delta_{\rm{low}}^{3/2} e^{-\eps (s-s_0)}\langle y\rangle^{-\frac{1}{5}}.
\label{eq:NPsi}
\end{align}
We recall that all the inequalities refer to the region $|y| \geq C_0$ so one should think about the profiles $\Psi_d$, $S_d$ to be interchangeable with their asymptotic behaviour for $|y| \geq C_0$.

We also have that
\begin{equation*}
| \mathcal D | \les e^{(4-2r)s} \left( \frac{| \nabla S |^2}{S^2} + \frac{|\Delta S |}{S}\right) \les e^{(4-2r)s} \left( \frac{| \nabla S |^2}{S_d^2} + \frac{|\Delta S |}{S_d}\right)
\end{equation*}
Now, using the interpolation between the $H^{m'}$ bounds and \eqref{eq:bootstrap_low_der00}, we can gain almost $\frac{1}{5}$ of a derivative, and any constant gets absorbed by $e^{(4-2r)s_0}$, which means that we obtain the bound
\begin{equation*}
| \mathcal D | \les \delta_{\rm{low}}^{3/2} e^{-\frac32\epsilon (s-s_0)} \frac{1}{\langle y \rangle^{\frac{1}{10}}}.
\end{equation*}


We clearly have the bounds
\begin{equation} \label{eq:moldova}
| \nabla^i S_d | \les_i \frac{S_d}{\langle y \rangle^i}, \qquad \mbox{ and } \qquad | \nabla^i \Psi_d | \les_i \frac{\Psi_d}{\langle y \rangle^i}
\end{equation}
Interpolating \eqref{eq:bootstrap_low_der02} with the $H^{m'}$ bounds  \eqref{eq:bootstrap_Hm}, we obtain:
\begin{align*}
| \nabla^i \tilde S | \les_{m'} e^{-\eps (s-s_0)}\delta_{\rm{low}} \frac{1}{\langle y \rangle^{\frac{m'}{10}\theta}} \les 
 e^{-\eps (s-s_0)} \delta_{\rm{low}} \frac{1}{\langle y \rangle^{\frac{i}{10}}}, \\
 | \nabla^i \tilde \Psi | \les_{m'} e^{-\eps (s-s_0)}\delta_{\rm{low}} \frac{1}{\langle y \rangle^{\frac{(m'+1)}{10}\theta}} \les 
 e^{-\eps (s-s_0)} \delta_{\rm{low}}\frac{1}{\langle y \rangle^{\frac{i}{10}}}
\end{align*} Moreover, we have 

\begin{align*}
\left| \mathcal L_\Psi + (r-2)\tilde \Psi + y\nabla \tilde \Psi \right| &\les | \nabla \Psi_d | | \nabla \tilde \Psi | + S_d |\tilde S |\les_{m'} \delta_{\rm{low}} e^{-\eps (s-s_0)}  \left( \frac{| \Psi_p |}{\langle y \rangle^{\frac{11}{10}}} +  S_d\right)\les_{m'} \delta_{\rm{low}} e^{-\eps (s-s_0)}\langle y \rangle ^{-(r-1)}\\
\left| \mathcal L_S + (r-1) \tilde S + y\nabla \tilde S \right| &\les | \nabla S_d | | \nabla \tilde \Psi| + | \nabla \tilde S | | \nabla \Psi_p | + S_d  |\Delta \tilde \Psi| + | \tilde S | | \Delta \Psi_d |  \\
&\les_{m'} \delta_{\rm{low}} e^{-\eps (s-s_0)}  \left( \frac{S_d}{\langle y \rangle^{\frac{1}{5}}} \right)
\end{align*}
In order to deal with $\nabla \tilde{\Psi},\nabla \tilde{S}, \Delta \tilde{\Psi}$ we do interpolation with $E_{\rm{low}}$.

Therefore, we obtain that
\begin{align*}
\left| (\p_s + (r-2)  + y\nabla) \tilde \Psi - \mathcal E_\Psi\right| &\les_{m'}  \delta_{\rm{low}} e^{-\eps (s-s_0)} \left( \delta_{\rm{low}}^{1/2}  \frac{1}{\langle y\rangle^{\frac{1}{10}}} + \frac{1}{\langle y \rangle^{(r-1)}} \right)\les_{m'}  \delta_{\rm{low}} e^{-\eps (s-s_0)} \langle y\rangle^{-\frac{1}{10}} 
\\
\left| (\p_s + (r-1)  + y\nabla) \tilde S  - \mathcal E_S\right| &\les_{m'}  \delta_{\rm{low}} e^{-\eps (s-s_0)}  S_d \left( \delta_{\rm{low}}^{1/2}\langle y\rangle^{-\frac{1}{10}}   +   \frac{1}{\langle y \rangle^{\frac{1}{10}}} \right)\lesssim_{m'}\delta_{\rm{low}} e^{-\eps (s-s_0)}  S_d \langle y \rangle^{-\frac{1}{10}}.
\end{align*}

We will work with trajectories as follows. We will work in the region $s \geq s_0$, $|y| \geq C$, such that $(y_\star, s_\star)$ is always  on the boundary (that is $(y_\star, s_\star) \in B(0, C)^c \times \{ s_0 \} \cup \{ -C, C \} \cup [s_0, \infty)$). The point $(y, s)$ will be on the exponential trajectory starting from $(y_\star, s_\star)$, so that $y = y_\star e^{s-s_\star}$.

We consider $\omega_{\Psi, (y_\star, s_\star)} (s) :=  e^{(r-2)(s-s_\star)} \tilde \Psi (y_\star e^{s-s_\star}, s)$, and $\omega_{S, (y_\star, s_\star)} (s) := e^{(r-1)(s-s_\star)} \tilde S (y_\star e^s_\star, s)$. Similarly, we define $\bar \omega_{\Psi, y_0}$ and $\bar \omega_{S, y_0}$ relative to $\Psi_d$, $S_d$. We obtain that
\begin{align} \begin{split} \label{eq:zagreb} 
| \p_s \omega_{\Psi, (y_\star, s_\star) } | &\les e^{(r-2)(s-s_\star)} \mathcal E_\Psi (y_\star e^{s-s_\star}) + \delta_{\rm{low}} e^{-\eps (s-s_0)}\frac{1}{\langle y_{*} e^{s-s_\star }\rangle ^{\frac{1}{10}}}, \\
| \p_s \omega_{S, (y_\star, s_\star)} | &\les e^{(r-1)(s-s_\star)} \mathcal E_S (y_\star e^{s-s_\star}) + \delta_{\rm{low}} e^{-\eps (s-s_0)} \bar \omega_{S, (y_\star, s_\star)} \frac{1}{\langle y_{*} e^{s-s_\star }\rangle ^{\frac{1}{10}}}
\end{split} \end{align}
Note that from \eqref{errores01},
\begin{align*}
|\mathcal E_\Psi | \les \mathbbm{1}_{|y| \geq  e^s} \frac{1}{\langle y \rangle ^{2(r-1)}}\\
|\mathcal E_s | \les \mathbbm{1}_{|y| \geq  e^s} \frac{S_{d}}{\langle y \rangle ^{(r-1)}}.
\end{align*}

From \eqref{eq:zagreb}, we conclude
\begin{align} \begin{split} \label{eq:oman}
| \p_s \omega_{\Psi, (y_\star, s_\star)} | &\les_{m'}  e^{(r-1)(s-s_\star )}\left(\frac{\mathbbm {1}_{|y_\star|\geq e^{s_\star }}}{\langle y_\star e^{s-s_\star }\rangle^{r-1}}+\delta_{\rm{low}}e^{-\epsilon(s-s_0)}\frac{1}{\langle y_\star e^{s-s_\star }\rangle^{\frac{1}{10}}}\right)  \\
| \p_s \omega_{S, (y_\star, s_\star)} | &\les_{m'} |\bar \omega_{S, (y_\star, s_\star)}| \left( \frac{\mathbbm{1}_{|y_\star| \geq e^{s_\star}}}{ \langle y_\star e^{s-s_\star} \rangle^{r-1 } } +  \delta_{\rm{low}} e^{-\eps (s-s_0)}  \frac{1}{\langle y_\star e^{s-s_\star }\rangle^{\frac{1}{10}}} \right)
\end{split} \end{align}

\textbf{Proof of \eqref{eq:bootstrap_low_der02*}.}\\
First, let us observe
\begin{align} \begin{split} \label{eq:kuwait}
| \bar \omega_{S, (y_\star, s_\star)} | &= e^{(r-1)(s-s_\star)} \hat{\mathfrak{X}}_d(y_\star/(e^s_\star))  S_p (|y_\star| e^{s-s_\star}) +e^{-(r-1)s_{\star}}\tilde{\mathfrak{X}}_d(|y_\star| e^{s-s_\star})\\
&\les \hat{\mathfrak{X}}_d (y_\star/(e^s_\star)) \frac{1}{\langle y_\star \rangle^{r-1}}+e^{-(r-1)s_\star}\leq \frac{1}{C} \ll 1.
\end{split} \end{align}
Now, let us recall that our bound \eqref{eq:bootstrap_low_der02*} follows trivially for $|y| \leq C$ from the linear stability bound. Recall $(y_\star, s_\star)$ is situated on the boundary of $|y| \geq C$, $s \geq s_0$, and its exponential trajectory reaches $(y, s)$ (that is, $y_\star e^{s-s_\star} = y$). At such boundary point $(y_\star, s_\star)$, the bootstrap bound is satisfied with an improved constant $\frac{1}{10}$.

Now, we combine \eqref{eq:kuwait} with \eqref{eq:oman} noting that we can absorb the constant (that depends on $m'$) of \eqref{eq:oman} with the $\frac{1}{C}$ gain from \eqref{eq:kuwait}: 

\begin{align}\label{eq:caribe}
|\omega_{\Psi, (y_\star, s_\star)} (s)| &\leq  |\omega_{\Psi, (y_\star, s_\star)} (s_\star)| + \int_{s_\star}^{s} e^{(r-1) (s'-s_\star)} \frac{\mathbbm{1}_{|y_\star| \geq  e^{s_\star}}}{\langle y_\star e^{s'-s_\star}\rangle^{r-1}} ds' \\\nonumber
&\quad+ \delta_{\rm{low}} e^{-\eps (s_\star - s_0)} \int_{s_\star}^{s} e^{(r-1-\eps) (s'-s_\star)} \left(\frac{1}{\langle y_\star e^{s'-s_\star}\rangle^{\frac{1}{10}}}\right)  ds' \notag \\\nonumber
&\leq
 \frac{\delta_{\rm{low}}}{10} e^{-(s_\star - s_0)} + \frac{\mathbbm{1}_{|y_\star| \geq  e^{s_\star}}}{\langle y_\star \rangle^{r-1}} (s-s_{\star}) + \delta_{\rm{low}} e^{-\eps (s_\star-s_0)} \left( \frac{1}{|y_\star|^{\frac{1}{10}}} \right)\frac{e^{(r-1-\epsilon)(s-s_{\star})}-1}{r-1-\epsilon}
\end{align}
where we used that $\frac{1}{ e^{s_0}} \ll \frac{\delta_{\rm{low}}}{C}$, wtih $\delta_{\rm{low}} \ll \eps$ and $|y_\star| \geq C \gg 1$.  Undoing the change from $\omega_{\Psi, (y_\star, s_\star)}$ to $\tilde \Psi$, we have that
\begin{align*}
| \tilde \Psi (y, s) | =e^{-(r-1) (s-s_\star)} | \omega_{\Psi, (y_\star, s_\star)} (s) | &\leq \frac{1}{10} e^{-\eps (s-s_0)} \delta_{\rm{low}}+  e^{-(r-1) (s-s_\star)} \frac{\mathbbm{1}_{|y_\star| \geq  e^{s_\star}}}{\langle y_\star \rangle^{r-1}} (s-s_{\star})\\&\quad+ \delta_{\rm{low}} e^{-\eps (s_\star-s_0)} \left( \frac{1}{|y_\star|^{\frac{1}{10}}} \right)\frac{e^{(r-1-\epsilon)(s-s_{\star})}-1}{r-1-\epsilon}\\
&\leq \frac{1}{4}  e^{-\eps (s-s_0)} \delta_{\rm{low}},
\end{align*}
where we use $r-1\geq 1 \gg \epsilon $, and either $|y_\star|=C$, or $s_{\star}=s_0$.
The same reasoning can be applied in a completely analogous way to $\omega_{S, (y_\star, s_\star)}$ in order to deduce the bound for $\tilde S$. \\

\textbf{Proof of \eqref{eq:bootstrap_low_der*} } \\

Let us notice that from \eqref{eq:SSprofiles},
\begin{align*}
\frac{\p_s \bar \omega_{S, (y_\star, s_\star)}}{\bar \omega_{S, (y_\star,s_\star)}} &= \frac{\p_s \left( e^{(r-1)(s-s_\star)} S_p (y_\star e^{s-s_\star}) \hat{\mathfrak{X}}_d (y_\star e^{-s_\star})+e^{-(r-1)s_\star }\tilde{\mathfrak{X}}_d(y_\star e^{-s_{\star}})\right)}{e^{(r-1)(s-s_\star)}(S_p (y_\star e^{s-s_\star}) \hat{\mathfrak{X}}_d (y_{\star} e^{-s_\star})+e^{-(r-1)s)}\tilde{\mathfrak{X}}_d (y_{\star} e^{-s_\star})} \\
&= \frac{(r-1)S_p (y_\star e^{s-s_\star}) \hat{\mathfrak{X}}_d (y_\star e^{-s_\star})+\nabla S_{p}(y_\star e^{s-s_\star})\cdot y_{\star}e^{s-s_{\star}}\hat{\mathfrak{X}}_d (y_\star e^{-s_\star})}{ S_p (y_\star e^{s-s_\star}) \hat{\mathfrak{X}}_d(y_{\star} e^{-s_\star})+e^{-(r-1)s}\tilde{\mathfrak{X}}_d (y_{\star} e^{-s_\star})} \\
& = O \left( \frac{(|\nabla S_p \nabla \Psi_p | + | S_p \Delta \Psi_p |)\Big|_{x = y_\star e^{s-s_\star}}(\hat{\mathfrak{X}}_d)(y_{\star} e^{-s_\star}) }{S_p(y_\star e^{s-s_\star})\hat{\mathfrak{X}}_d(y_{\star} e^{-s_\star})+e^{-(r-1)s}\tilde{\mathfrak{X}}_d (y_{\star} e^{-s_\star})} \right) = O \left( \frac{1}{\langle y_\star e^{s-s_\star} \rangle^{r}}\right).
\end{align*}

Combining that with \eqref{eq:oman}, 
\begin{align*}
\p_s \frac{\omega_{S, (y_\star, s_\star)}}{ \bar \omega_{S, (y_\star, s_\star) }} &\les_{m'} \left( \frac{\mathbbm{1}_{|y_\star| \geq e^{s_\star}}}{ \langle y_\star e^{s-s_\star} \rangle^{r-1 } } +  \delta_{\rm{low}} e^{-\eps (s-s_0)}  \frac{1}{\langle y_\star e^{s-s_\star }\rangle^{\frac{1}{10}}} \right)+  \frac{\omega_{S, (y_\star, s_\star)}}{\bar \omega_{S, (y_\star, s_\star) }}\frac{1}{\langle y_\star e^{s-s_\star} \rangle^r} \\
&\les \frac{\mathbbm{1}_{|y_\star| \geq  e^{s_\star}}}{\langle y_\star  e^{s-s_\star} \rangle^{r-2}} +\delta_{\rm{low}} e^{-\eps (s-s_0)}  \frac{1}{\langle y_\star e^{s-s_\star }\rangle^{\frac{1}{20}}}+\delta_{\rm{low}} \frac{1}{\langle y_\star e^{s-s_\star} \rangle^{r-1} } 
\end{align*}
where we used the bootstrap assumption $\frac{\tilde S}{S_d} \leq \delta_{\rm{low}}$, which implies that $\left| \frac{\omega_{\tilde{S}, (y_\star, s_\star)}}{\bar \omega_{S_d, (y_\star, s_\star) }} \right| \leq \delta_{\rm{low}}$.

Now, let us recall that in the range $|y| \leq C$ our bounds follow from the linear stability bounds and Sobolev embedding. For any $(y, s)$ with $|y| \geq C$ and $s \geq s_0$ we find an exponential trajectory starting at the point $(y_\star, s_\star )$ on the boundary of the region. That is, either $|y_\star| = C$ or $s_\star = s_0$. Moreover, since $(y, s)$ is in that trajectory, we have that $y = y_\star e^{s-s_\star}$. Let us note that at the boundary points $(y_\star, s_\star)$ our bootstrap bound holds with an improved bound $\frac{1}{10}$.

Now, similar to the integral in \eqref{eq:caribe}:
\begin{align*}
\frac{\omega_{S, (y_\star, s_\star)}(s)}{\bar \omega_{S, (y_\star, s_\star)}(s)} &\leq \left| \frac{\tilde S(y_\star, s_\star)}{ S_d (y_\star)} \right|
+ \int_{s_\star}^{s} \frac{\mathbbm{1}_{|y_\star| \geq  e^{s_\star}}}{\langle y_\star e^{s-s_\star}\rangle^{r-2}} ds'  + \delta_{\rm{low}}   \int_{s_\star}^{s} \frac{1}{\langle y_\star e^{s-s_\star}\rangle^{\frac{1}{20}}} ds' + \delta_{\rm{low}} \int_{s_\star}^s  \frac{1}{\langle y_\star e^{s-s_\star}\rangle^{r-1}}  ds' \notag \\
&\leq \frac{1}{10} \delta_{\rm{low}} + \frac{1}{ e^{s_0(r-2)}} + \frac{40\delta_{\rm{low}}}{C}  \leq \frac{1}{5} \delta_{\rm{low}}.
\end{align*}
Now, we undo the change of variables from $S$ to $\omega_{S}$ and note that 
\begin{equation*}
\left| \frac{\omega_{S, (y_\star, s_\star)}(s)}{\bar \omega_{S, (y_\star, s_\star)}(s)} \right| = \left| \frac{e^{(r-2)(s-s_\star)} \tilde S(y_\star e^{s-s_\star}, s) }{ e^{(r-2)(s-s_\star)} S_d (y_\star e^{s-s_\star}) } \right| = \frac{|\tilde S(y, s)|}{ |S_d (y)| }.
\end{equation*}
\end{proof}

 \subsection{Closing the high derivative bounds}
Let $w = \frac{1}{2}\log (P)$. Recall that we have equations:
\begin{align}\label{equationhighener}
\begin{cases}
&\partial_{s}\Psi=-|\nabla \Psi|^2-\alpha S^2 +e^{(4-2r)s}(\Delta w+|\nabla w|^2)-y\cdot \nabla \Psi-(r-2)\Psi\\
&\p_{s}P=-2\nabla P\cdot \nabla \Psi-2P\Delta \Psi-y\cdot \nabla P+\frac{1-r}{\alpha}P\\
&\p_{s}S=-2\nabla S\cdot \nabla \Psi -2\alpha S \Delta \Psi-y\cdot \nabla S-(r-1)S\\
&\p_{s}w=-2\nabla w\cdot \nabla\Psi-\Delta\Psi-y\cdot \nabla w+\frac{1-r}{2\alpha}
\end{cases}
\end{align}
 
 We first introduce the following lower order estimates given by the bootstrap argument and interpolation lemma \ref{lemma:GN_general01}:
\begin{lemma}\label{lowerinterpolation}
Under the bootstrap assumptions \eqref{eq:bootstrap_low_der00}, \eqref{eq:bootstrap_low_der02}, \eqref{eq:bootstrap_Hm},\eqref{highordermestimate}  , we have for $ j=1,2$
\[
\|\nabla^{j}P\|_{L^{\infty}}+\|\nabla^{j}\Psi\|_{L^{\infty}}+\|\nabla^{j}w\|_{L^{\infty}}\lesssim 1.
\]

\begin{proof}
The estimates for
$\|\nabla^{j}P\|_{L^{\infty}}+\|\nabla^{j}\Psi\|_{L^{\infty}}$ directly follow from the interpolation between \eqref{eq:bootstrap_low_der00} and \eqref{highordermestimate}.
For $\|\nabla w\|_{L^{\infty}}$, from \eqref{perturb defi} and \eqref{eq:bootstrap_low_der00} we have 
\begin{align*}
|\nabla w|&= \left|\frac{1}{2\alpha}\frac{\nabla S}{S}\right|=\left|\frac{1}{2\alpha}\frac{\nabla S_{d}}{S}+\frac{1}{2\alpha}\frac{\nabla \tilde{S}}{S}\right|\lesssim 1+\left|\frac{\nabla\tilde{S}}{S_{d}}\right|.
\end{align*}
Moreover, from \eqref{highordermestimate} and $k\ll E$, we have
\begin{equation}\label{highordermestimateforperturb}
\|\nabla^{k-l}\tilde{S}\|_{L^{2}(P\phi^{l})}\lesssim\|\nabla^{k-l}S\|_{L^{2}(P\phi^{l})}+\|\nabla^{k-l}S_d\|_{L^{2}(P\phi^{l})}\lesssim E.
\end{equation}
By interpolation between \eqref{eq:bootstrap_low_der00} and \eqref{highordermestimateforperturb} when $l=\frac{1}{10}k$, we have
\[
\|\nabla \tilde{S}(\frac{1}{S_{d}})^{\theta}\phi^{\frac{l}{2}(1-\theta)}\sqrt{P}^{1-\theta}\|_{L^{\infty}}\lesssim \|\frac{\tilde{S}}{S_{d}}\|_{L^{\infty}}^{\theta}\|\nabla^{k-l}\tilde{S}\|_{L^{2}(P\phi^{l})}^{1-\theta}\lesssim \delta_{\rm low}^{\frac{1}{2}}E^{\frac{1}{2}}\lesssim 1,
\]
with $\theta=1-\frac{\frac{1}{d}}{\frac{k-l}{d}-\frac{1}{2}}$.
Then
\[
\|\frac{\nabla \tilde{S}}{S_{d}}\|_{L^{\infty}}\lesssim \|(\frac{1}{S_{d}})^{1-\theta}P^{-\frac{1}{2}(1-\theta)}\phi^{-\frac{l}{2}(1-\theta)}\|_{L^\infty}\lesssim 1.
\]
$\|\nabla^2 w\|$ can be controlled in a similar way.
\end{proof}
\end{lemma}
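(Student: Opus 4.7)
The plan is to derive each bound by Gagliardo--Nirenberg interpolation (Lemma \ref{lemma:GN_general01}) between the low-regularity $L^{\infty}$ control coming from the bootstrap assumptions and the top-order weighted Sobolev control \eqref{highordermestimate}. I would split $\Psi = \Psi_d + \tilde{\Psi}$ and $S = S_d + \tilde{S}$: all derivatives of the damped profiles $\Psi_d, S_d$ are uniformly bounded of any order because the self-similar profiles obey \eqref{eq:profiles_decay} and the cutoffs introduce only benign $e^{-s}$ factors. Hence everything reduces to controlling the perturbations, together with a chain rule through $P \propto S^{1/\alpha}$ and $w = \tfrac{1}{2}\log P$.

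For $\nabla^{j}\Psi$ and $\nabla^{j}P$ with $j=1,2$, I would apply Lemma \ref{lemma:GN_general01} directly. One interpolates $\|\tilde{\Psi}\|_{L^{\infty}}$ (from \eqref{eq:bootstrap_low_der02}) and $\|\tilde{S}/S_d\|_{L^{\infty}}$ (from \eqref{eq:bootstrap_low_der00}) against the weighted $\dot{H}^{k-l}$ bound with $l=k/10$. Since $k$ is chosen much larger than $j, m', d$, the Gagliardo--Nirenberg exponent is tiny, so the resulting interpolated bound is of the form $\delta_{\rm low}^{1-\theta}E^{\theta}\lesssim 1$; the weights $P^{1/2}\phi^{l/2}$ appearing in \eqref{highordermestimate} cause no trouble thanks to $P \gtrsim \langle y \rangle^{-(r-1)/\alpha}$ and the polynomial growth of $\phi^l$. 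For $\nabla^j P$ I would simply expand $\nabla^j(c\, S^{1/\alpha})$ as a polynomial in $\nabla^{\le j} S$ divided by powers of $S$; the bound $S \sim S_d$ from $|\tilde{S}/S_d|<\delta_{\rm low}$ makes these denominators harmless.

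The genuinely nontrivial case is $w = \tfrac{1}{2\alpha}\log S + \text{const}$, whose derivatives are not polynomial in $S$. From
\[
\nabla w = \frac{1}{2\alpha}\frac{\nabla S_d + \nabla \tilde{S}}{S},
\]
the profile term is $O(1)$ by $|\nabla S_d| \lesssim S_d/\langle y \rangle$ and $S \sim S_d$. For the perturbation term I would apply Lemma \ref{lemma:GN_general01} to $\nabla \tilde{S}$ with exponent $\theta = 1 - \tfrac{1/d}{(k-l)/d - 1/2}$, arranged so the weight on the $L^{\infty}$ side becomes $S_d^{\theta}(P\phi^l)^{(1-\theta)/2}$, which is $\gtrsim S_d$ uniformly (since $P^{1/(2\alpha)} \sim S_d^{1/(2\alpha)}$ and $\phi^l\gtrsim 1$). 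This yields $\|\nabla \tilde{S}/S_d\|_{L^{\infty}} \lesssim \delta_{\rm low}^{\theta} E^{1-\theta} \lesssim 1$. For $\nabla^2 w$, the identity
\[
\nabla^2 w = \frac{1}{2\alpha}\left(\frac{\nabla^2 S}{S} - \frac{\nabla S \otimes \nabla S}{S^2}\right)
\]
reduces matters to the same interpolation with one extra derivative taken from the top-order endpoint; the quadratic piece is already $O(1)$ by the $\nabla w$ bound just proved.

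The main obstacle is bookkeeping the weights in the Gagliardo--Nirenberg step for $w$: one must verify that the negative power of $S_d$ produced by differentiating $\log S$ is fully absorbed by the weight $P^{(1-\theta)/2}\sim S_d^{(1-\theta)/(2\alpha)}$ appearing in \eqref{highordermestimate}. Because $k$ is chosen after all other parameters (in particular after $\alpha$ in the hierarchy \eqref{choiceofparameter}), we may take $1-\theta$ as small as desired, which makes the alignment of weights automatic and closes the bound.
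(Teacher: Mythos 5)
Your plan follows the paper's proof essentially verbatim: split $\Psi = \Psi_d + \tilde\Psi$, $S = S_d + \tilde S$, interpolate via the weighted Gagliardo--Nirenberg Lemma \ref{lemma:GN_general01} between the bootstrap $L^\infty$ bounds and the top-order energy \eqref{highordermestimate} at level $l = k/10$, and reduce $\nabla^j w$ to $\|\nabla\tilde S/S_d\|_{L^\infty}$ via $\nabla w = \tfrac{1}{2\alpha}\nabla S/S$.

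Two small slips worth fixing. First, the GN output weight on $\nabla\tilde S$ should be $(1/S_d)^\theta(P\phi^l)^{(1-\theta)/2}$, not $S_d^\theta(P\phi^l)^{(1-\theta)/2}$, and the condition you need is that it dominates $1/S_d$, not $S_d$; otherwise the pointwise inequality $|\nabla\tilde S| \lesssim 1/W$ gives a bound that grows rather than decays with $S_d$. Second, the closing inequality $(1/S_d)^\theta(P\phi^l)^{(1-\theta)/2} \gtrsim 1/S_d$ is equivalent to $P\phi^l \gtrsim S_d^{-2}$, with the power $(1-\theta)$ dividing out identically on both sides. So taking $1-\theta$ small does not help: what actually closes the step is that $l = k/10$ is large, so $\phi^l \sim \langle y\rangle^{2l}$ overwhelms the polynomial decay of $S_d^2 P$. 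The paper's line $\|(1/S_d)^{1-\theta}P^{-(1-\theta)/2}\phi^{-l(1-\theta)/2}\|_{L^\infty}\lesssim 1$ rests on exactly this. With these two corrections the argument is the paper's, including the value of $\theta$, the reduction of $\nabla^2 w$ to one extra derivative, and the observation that for $\nabla^j P$ one passes through the chain rule in $P \propto S^{1/\alpha}$ using $S \sim S_d$.
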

\begin{lemma}\label{interpolation01}
We have the following interpolation estimates:
For $j+j'=k+2-l$, $j,j'\geq 3$, $0\leq l\leq \frac{k}{10}$, we have
\begin{align}\label{inter01}
e^{\frac{1}{2}(4-2r)s}\|\nabla^{j} w \nabla ^{j'}\Psi\|_{L^{2}(P\phi^{l})}\leq O(E_{l,0}^{\frac{1}{2}-\frac{1}{4k}}),
\end{align}
\begin{align}\label{inter02}
e^{(4-2r)s}\|\nabla^{j} w \nabla ^{j'}w\|_{L^{2}(P\phi^{l})}\leq O(E_{l,0}^{\frac{1}{2}-\frac{1}{4k}}),
\end{align}
\begin{align}\label{inter03}
\|\nabla^{j} \Psi \nabla ^{j'}\Psi\|_{L^{2}(P\phi^{l})}\leq O(E_{l,0}^{\frac{1}{2}-\frac{1}{4k}}).
\end{align}
For $j+j'=k+1-l$, $j'\geq 3$, $j\geq 2$, we have
\begin{align}\label{inter04}
\|\nabla^{j} S \nabla ^{j'}\Psi\|_{L^{2}(P\phi^{l})}\leq O(E_{l,0}^{\frac{1}{2}-\frac{1}{4k}}).
\end{align}
For $j+j'=k-l$, $j\geq 2$, $j'\geq 2$, we have
\begin{align}\label{inter05}
\|\nabla^{j} S \nabla ^{j'}S\|_{L^{2}(P\phi^{l})}\leq O(E_{l,0}^{\frac{1}{2}-\frac{1}{4k}}).
\end{align}
\end{lemma}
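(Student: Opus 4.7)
The plan is to deduce each of \eqref{inter01}--\eqref{inter05} by Hölder's inequality combined with the weighted Gagliardo--Nirenberg interpolation (Lemma \ref{lemma:GN_general01}), applied between the top--level bounds furnished by the high derivative bootstrap \eqref{highordermestimate} and the $L^{\infty}$ bounds on $\nabla^{2}w$, $\nabla^{2}\Psi$, $\nabla^{2}S$ established in Lemma \ref{lowerinterpolation}. The hypothesis $j,j'\geq 3$ (respectively $j,j'\geq 2$ for \eqref{inter04}--\eqref{inter05}) is precisely what makes each derivative index strictly interior to the interpolation window $[2,k-l]$ (or $[2,k-l-1]$ for $S$), so that the interpolation parameters $\theta_{1},\theta_{2}$ coming out of Lemma \ref{lemma:GN_general01} are bounded away from $1$.

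The first observation is to recast \eqref{highordermestimate} as three companion top--level norms
\begin{equation*}
\|\nabla^{k-l-1}S\,\sqrt{P}\phi^{l/2}\|_{L^{2}}+\|\nabla^{k-l}\Psi\,\sqrt{P}\phi^{l/2}\|_{L^{2}}+e^{\frac{1}{2}(4-2r)s}\|\nabla^{k-l}w\,\sqrt{P}\phi^{l/2}\|_{L^{2}}\lesssim E_{l,0}^{1/2}.
\end{equation*}
In particular the natural top--level object for $w$ is $e^{\frac{1}{2}(4-2r)s}\nabla^{k-l}w$, which is exactly the reason the prefactors $e^{\frac{1}{2}(4-2r)s}$ and $e^{(4-2r)s}$ appear on the left of \eqref{inter01} and \eqref{inter02}; since $r>2$ and $s\geq s_{0}$, these same prefactors are also harmless when paired with the low--derivative $L^{\infty}$ bound.

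For the representative case \eqref{inter01}, I would apply Hölder with $\tfrac{1}{p_{1}}+\tfrac{1}{p_{2}}=\tfrac{1}{2}$ and split the weight $\sqrt{P}\phi^{l/2}=(\sqrt{P}\phi^{l/2})^{\tau}(\sqrt{P}\phi^{l/2})^{1-\tau}$ between the two factors to get
\begin{equation*}
e^{\frac{1}{2}(4-2r)s}\|\nabla^{j}w\,\nabla^{j'}\Psi\|_{L^{2}(P\phi^{l})}\lesssim \|(e^{\frac{1}{2}(4-2r)s}\nabla^{j}w)(\sqrt{P}\phi^{l/2})^{\tau}\|_{L^{p_{1}}}\,\|\nabla^{j'}\Psi\,(\sqrt{P}\phi^{l/2})^{1-\tau}\|_{L^{p_{2}}}.
\end{equation*}
Each factor is then fed into Lemma \ref{lemma:GN_general01}, interpolating the derivative index $j$ (resp.\ $j'$) between $2$ and $k-l$. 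The contribution from the $L^{\infty}$ bound on $\nabla^{2}w$ or $\nabla^{2}\Psi$ is an $O(1)$ factor; the contribution from the top--level $L^{2}(P\phi^{l})$ bound is $E_{l,0}^{\theta_{i}/2}$. The scaling constraint $j+j'=k+2-l$ then forces $\theta_{1}+\theta_{2}$ to be strictly below $1$, with a deficit of order $1/k$ once the choices $j\leq k-l-1$ and $j'\leq k-l-1$ are used; this produces the advertised $E_{l,0}^{\frac{1}{2}-\frac{1}{4k}}$. The estimates \eqref{inter02}--\eqref{inter05} follow along exactly the same lines, swapping $\Psi$ for $w$ or $S$ and replacing the top--level index $k-l$ by $k-l-1$ in the cases where $S$ appears, and adjusting $\tau$, $p_{1},p_{2}$ accordingly.

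The main obstacle I anticipate is bookkeeping rather than conceptual: one needs to verify that the splitting $\tau\in[0,1]$ and the exponents $p_{1},p_{2},\theta_{1},\theta_{2}$ produced by the scaling identity of Lemma \ref{lemma:GN_general01} lie in the admissible range of that lemma (in particular that $\sqrt{P}\phi^{l/2}$ raised to the relevant fractional power still satisfies the positivity/growth hypotheses of the weighted interpolation), and that the inequality $\theta_{1}+\theta_{2}\leq 1-\tfrac{1}{2k}$ holds in every case. Both are elementary once the identities $j+j'=k+2-l$, $k+1-l$, or $k-l$ are plugged into the scaling relation with $d=8$, but the small loss encoded in $-\tfrac{1}{4k}$ is exactly what will permit the nonlinear terms to be absorbed in the top--order energy identity in the subsequent subsection, so it cannot be dropped.
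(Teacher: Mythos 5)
There is a genuine gap, and it is the central mechanism of the lemma rather than a detail. You claim that the scaling constraint $j+j'=k+2-l$ ``forces $\theta_1+\theta_2$ to be strictly below $1$, with a deficit of order $1/k$,'' and that this is what yields the extra $E_{l,0}^{-1/4k}$. This is false. With low endpoint at $\nabla^2$ and $L^\infty$, and high endpoint at $\nabla^{k-l}$ in $L^2(P\phi^l)$, the Gagliardo--Nirenberg scaling identity of Lemma \ref{lemma:GN_general01} gives $\theta_i = \frac{j_i-2}{k-l-2}$, so
\[
\theta_1+\theta_2 = \frac{(j-2)+(j'-2)}{k-l-2} = \frac{k-l-2}{k-l-2} = 1
\]
exactly. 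Indeed, if you add the two identities $\frac{\sigma_i}{2}=\frac{j_i-2}{d}+\theta_i\bigl(\tfrac12-\tfrac{k-l-2}{d}\bigr)$ and impose Hölder's constraint $\tfrac{\sigma_1}{2}+\tfrac{\sigma_2}{2}=\tfrac12$, you find $(\theta_1+\theta_2)\bigl(\tfrac12-\tfrac{k-l-2}{d}\bigr)=\tfrac12-\tfrac{k-l-2}{d}$, which forces $\theta_1+\theta_2=1$ whenever $k-l-2\neq d/2$. There is no ``deficit'' available from the exponent count, and since you also treat the $L^\infty$ bound on $\nabla^2\Psi$, $\nabla^2 S$, $\nabla^2 w$ as merely $O(1)$, your argument yields $O(E_{l,0}^{1/2})$, not $O(E_{l,0}^{1/2-1/4k})$. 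This matters: the whole point of the lemma is that the lower-order terms appear in the energy identity at a strictly smaller power than the leading $E_{l,0}$.

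The actual sources of the improvement, which your proposal omits, are two. First, for \eqref{inter03}--\eqref{inter05} you must decompose into profile plus perturbation, e.g.\ $\Psi=\Psi_p+\tilde\Psi$. The profile--profile block is $O(k)\ll E_{l,0}^{1/2-1/4k}$, and in the blocks containing at least one $\tilde\Psi$ (or $\tilde S$) the GN interpolation produces a factor $\|\nabla^2\tilde\Psi\|_{L^\infty}^{1-\theta_i}$; this is small, of size a power of $\delta_{\rm low}$ coming from \eqref{eq:bootstrap_low_der00}--\eqref{eq:bootstrap_Hm}, and since $\delta_{\rm low}\ll 1/E$ it absorbs the $E_{l,0}^{1/4k}$. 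Second, for \eqref{inter01}--\eqref{inter02} no profile decomposition is needed, but the cancellation with the prefactor $e^{(4-2r)s}$ is imperfect: the bootstrap gives $\|\nabla^{k-l}w\,\sqrt{P}\phi^{l/2}\|_{L^2}\lesssim E_{l,0}^{1/2}e^{-\frac12(4-2r)s}$, so the product of GN weights leaves a residual $e^{\frac12(4-2r)s(1-\theta_1)}$ (with $1-\theta_1\geq \frac{1}{k-l-2}>0$), which decays in $s$ since $r>2$; taking $s_0$ large enough relative to $E_{l,0}$ supplies the missing $E_{l,0}^{-1/4k}$. Without one of these two mechanisms the lemma as stated cannot be reached.
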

\begin{proof}

We will use the  interpolation Lemma \ref{lemma:GN_general01}. We also claim that weight $P$ satisfies the conditions in the lemma uniformly in time $s$. In fact, from \eqref{errores05}, we have $S_d$ satisfies the weight condition. Moreover, \eqref{rescaleden} and the bootstrap bound \eqref{eq:bootstrap_low_der*} tells us the weight condition holds for $P$ as long as the weight condition holds for $S_d$. 

We start with \eqref{inter03}.
First, 
\begin{align*}
&\quad \|\nabla^{j}\Psi\nabla^{j'}\Psi\|_{L^2(P\phi^{l})}\\
&\lesssim \|\nabla^{j}\Psi_{p}\nabla^{j'}\Psi_{p}\|_{L^2(P\phi^{l})}+\|\nabla^{j}\Psi_{p}\nabla^{j'}\tilde{\Psi}\|_{L^2(P\phi^{l})}+\|\nabla^{j}\tilde{\Psi}\nabla^{j'}\tilde{\Psi}\|_{L^2(P\phi^{l})}\\
&\lesssim O(k)+\|\nabla^{j}\Psi_{p}\nabla^{j'}\tilde{\Psi}\|_{L^2(P\phi^{l})}+\|\nabla^{j}\tilde{\Psi}\nabla^{j'}\tilde{\Psi}\|_{L^2(P\phi^{l})}.
\end{align*}
For $\|\nabla^{j}\tilde{\Psi}\nabla^{j'}\tilde{\Psi}\|_{L^2(P\phi^{l})}$, by letting $\theta_1=\frac{j-2}{k-l-2}$, $\theta_2=\frac{j'-2}{k-l-2}$, $\frac{\sigma_1}{2}=\frac{j-2}{d}+\theta_1(\frac{1}{2}-\frac{k-l-2}{d})$, $\frac{\sigma_2}{2}=\frac{j'-2}{d}+\theta_2(\frac{1}{2}-\frac{k-l-2}{d}),$
 we have
\begin{align*}
&\|\nabla^{j}\tilde{\Psi}\nabla^{j'}\tilde{\Psi}\|_{L^2(P\phi^{l})}\\
\lesssim &\|\nabla^{j}\tilde{\Psi}(P\phi^{l})^{\frac{\theta_1}{2}}\|_{L^{\frac{2}{\sigma_1}}}\|\nabla^{j'}\tilde{\Psi}(P\phi^{l})^{\frac{\theta_2}{2}}\|_{L^{\frac{2}{\sigma_2}}}\\
\lesssim_{k} & \|\nabla^{k}\tilde{\Psi}(P\phi^{l})^{\frac{1}{2}}\|_{L^{2}}^{\theta_1} \|\nabla^{2}\tilde{\Psi}\|_{L^{\infty}}^{1-\theta_1}\|\nabla^{k}\tilde{\Psi}(P\phi^{l})^{\frac{1}{2}}\|_{L^{2}}^{\theta_2} \|\nabla^{2}\tilde{\Psi}\|_{L^{\infty}}^{1-\theta_2}\\
\lesssim_{k}&\|\nabla^{k}\tilde{\Psi}(P\phi^{l})^{\frac{1}{2}}\|_{L^{2}} \|\nabla^{2}\tilde{\Psi}\|_{L^{\infty}}\\
\lesssim_{k} & E_{l,0}^{\frac{1}{2}}\delta_{0}^{\frac{1}{2}}\leq E_{l,0}^{\frac{1}{2}-\frac{1}{4k}}.
\end{align*}
Here we use the estimate for $\|\nabla S\|_{L^{\infty}}$, $\|\nabla^2 \Psi\|_{L^{\infty}}$ in Lemma \ref{lowerinterpolation} and the interpolation lemma \ref{lemma:GN_general01} by letting $(f, m, p ,q, i, \frac{1}{\bar{r}},\theta)=(\nabla^{2}\tilde{\Psi}, k-l-2, \infty, 2,j-2, \frac{\sigma_1}{2}, \theta_1)$ or $(\nabla^{2}\tilde{\Psi}, k-l-2, \infty, 2,j-2, \frac{\sigma_2}{2}, \theta_2).$
Similarly, we have
\begin{align*}
&\|\nabla^{j}\Psi_{p}\nabla^{j'}\tilde{\Psi}\|_{L^2(P\phi^{l})}\\
\lesssim_{k} & \|\nabla^{k}\Psi_{p}(P\phi^{l})^{\frac{1}{2}}\|_{L^{2}}^{\theta_1} \|\nabla^{2}\Psi_{p}\|_{L^{\infty}}^{1-\theta_1}\|\nabla^{k}\tilde{\Psi}(P\phi^{l})^{\frac{1}{2}}\|_{L^{2}}^{\theta_2} \|\nabla^{2}\tilde{\Psi}\|_{L^{\infty}}^{1-\theta_2}\\
\lesssim_{k} &E_{l,0}^{\frac{1}{2}}\delta_{0}^{\frac{1}{k-l-2}}\\
\leq &E_{l,0}^{\frac{1}{2}-\frac{1}{4k}},
\end{align*}
where we use $\delta_{0} \ll \frac{1}{E_{l,0}^{\frac{1}{2}}}.$
We claim \eqref{inter04} can be controlled similarly by taking $\theta_1=\frac{j-1}{k-2}$, $\theta_2=\frac{j'-2}{k-2}$. \eqref{inter04} can be controlled similarly by taking $\theta_1=\frac{j-1}{k-2}$, $\theta_2=\frac{j'-1}{k-2}$.
For \eqref{inter02}, with $f=\nabla^{2}w, m=k-2,p=\infty, q=2$ and $i=j-1$ or $j'-1$, from interpolation lemma \ref{lemma:GN_general01} we have 
\begin{align*}
\|\nabla^{j}w\nabla^{j'}w\|_{L^2(P\phi^{l})}&\lesssim \|\nabla^{k}w (P\phi^{l})^{\frac{1}{2}}\|_{L^2}\|\nabla^{2}w\|_{L^{\infty}}\\
&\lesssim E_{l,0}^{\frac{1}{2}}e^{-\frac{1}{2}(4-2r)s}.
\end{align*}
Since $s\gg E_{l,0}$, we have \eqref{inter02}.
Moreover, from the estimate for $\|\nabla^{2}w\|_{L^{\infty}}$ in Lemma \ref{lowerinterpolation}, we have
\begin{align*}
\|\nabla^{j}w\nabla^{j'}\Psi\|_{L^2(P\phi^{l})}&\lesssim \|\nabla^{k}w (P\phi^{l})^{\frac{1}{2}}\|_{L^2}^{\theta_{1}}\|\nabla^{2}w\|_{L^{\infty}}^{1-\theta_1}\|\nabla^{k}\Psi(P\phi^{l})^{\frac{1}{2}}\|_{L^2}^{\theta_{2}}\|\nabla^{2}\Psi\|_{L^{\infty}}^{1-\theta_2}\\
&\lesssim E_{l,0}^{\frac{1}{2}}e^{-\frac{1}{2}(4-2r)s\theta_{1}}\\
&\lesssim E_{l,0}^{\frac{1}{2}}e^{-\frac{1}{2}(4-2r)s\frac{k-l-3}{k-l-2}}.
\end{align*}
Then \eqref{inter02} follows since  $s\gg kE$. 
\eqref{inter04} follows a similar way as \eqref{inter05}.
\end{proof}
Now we show the estimate \eqref{highordermestimate} using the following lemma.

\begin{lemma}
Under the bootstrap assumptions in Proposition \ref{prop:bootstrap}, we have the following estimate: 
\begin{align*}
\frac{dE_{k-l}(s)}{ds}\leq -\delta k E_{k-l}(s)+O(E_{l,0}),
\end{align*}
with some $\delta>0$ independent of $k$.
\end{lemma}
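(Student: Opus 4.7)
The plan is to differentiate each of the equations in \eqref{equationhighener} the appropriate number of times (once more for $\Psi$ and $w$ than for $S$) and form the time derivative of $E_{k-l}(s)$ by pairing each differentiated equation with the matching quantity in the $L^2(P\phi^l)$ inner product. The $e^{(4-2r)s}$ prefactor on the $w$-piece is crucial: when differentiated in $s$ it also contributes $(4-2r)e^{(4-2r)s}\int|\nabla^{k-l}w|^2 P\phi^l$, which has definite negative sign and adds to the dissipation. In the standard way, the transport $y\cdot\nabla$ combined with the self-similar rescalings $-(r-2)\Psi$, $-(r-1)S$, $\tfrac{1-r}{\alpha}P$ and the weight $P\phi^l$ (whose evolution is governed by the continuity equation for $P$) give an effective damping of size $\delta k$: each of the $k-l$ derivatives hitting $y\cdot\nabla$ contributes a factor $-1$ via the commutator $[\nabla,y\cdot\nabla]=\nabla$, producing the announced $-\delta k E_{k-l}$. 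The profile repulsivity conditions \eqref{eq:radial_repulsivity}--\eqref{eq:angular_repulsivity} ensure that the top-order contributions from the couplings $-2\alpha S_d\nabla\tilde S$ and $-2U_d\cdot\nabla\tilde U$ do not destroy this damping, exactly as in Subsection \ref{lowderivativeenergy}.

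The main obstacle is to avoid derivative loss in the top-order interaction between the $\Psi$ equation and the $w$ equation. The dispersion $e^{(4-2r)s}\Delta w$ in $\partial_s\Psi$ produces, when differentiated $k-l$ times and paired with $\nabla^{k-l}\Psi\, P\phi^l$, the term $e^{(4-2r)s}\int \nabla^{k-l}\Delta w\cdot\nabla^{k-l}\Psi\, P\phi^l$, involving $k-l+2$ derivatives on $w$ and thus a priori uncontrolled by $E_{k-l}$. The key cancellation is that, by integration by parts, this equals $-e^{(4-2r)s}\int \nabla^{k-l+1}w\cdot\nabla^{k-l+1}\Psi\, P\phi^l$ modulo commutators. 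Simultaneously, the $-\Delta\Psi$ term in $\partial_s w$, when differentiated $k-l$ times and paired with $e^{(4-2r)s}\nabla^{k-l}w\, P\phi^l$, contributes exactly $+e^{(4-2r)s}\int \nabla^{k-l+1}w\cdot\nabla^{k-l+1}\Psi\, P\phi^l$ after integration by parts. The top-order parts thus cancel; the remaining commutators, where $\nabla(P\phi^l)$ falls on one of the highest-order factors, involve one fewer derivative and are controlled by $E_{k-l}$ itself together with lower-order norms. This is precisely the role of working with $w=\tfrac{1}{2}\log P$ instead of $P$: it renders the highest-order balance linear and symmetric and eliminates the would-be derivative loss. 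The nonlinear piece $e^{(4-2r)s}|\nabla w|^2$ is harmless since the prefactor $e^{(4-2r)s}$ is tiny by taking $s_0$ large, and gives at most $O(E_{l,0})$ contributions via \eqref{inter02}.

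For the remaining nonlinear terms one applies Lemma \ref{interpolation01} together with Lemma \ref{lowerinterpolation}. All the medium-split products $\nabla^{j}f\cdot\nabla^{j'}g$ with $f,g\in\{\Psi,S,w\}$ are bounded in $L^2(P\phi^l)$ by $O(E_{l,0}^{1/2-1/(4k)})$; after Cauchy--Schwarz, each contribution to $\frac{d}{ds}E_{k-l}$ is at most $E_{k-l}^{1/2}\cdot O(E_{l,0}^{1/2-1/(4k)}) \leq O(E_{l,0}^{1-1/(4k)}) = O(E_{l,0})$. For the extreme-split terms where one factor is low order (e.g., $\nabla\Psi\cdot\nabla^{k-l}\Psi$ or $\alpha S\cdot \nabla^{k-l-1}S$), the low-derivative factor is placed in $L^\infty$ via Lemma \ref{lowerinterpolation} and the remaining $L^2(P\phi^l)$ norm of the top-order factor is absorbed into $\tfrac{\delta k}{2}E_{k-l}$ via Young's inequality, together with the dissipation from the transport and the profile repulsivity.

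Assembling these ingredients yields
\begin{equation*}
\frac{d}{ds} E_{k-l}(s) \leq -\delta k E_{k-l}(s) + O(E_{l,0})
\end{equation*}
with $\delta>0$ depending only on the profile (in particular, independent of $k$). The boundary contributions from $\partial(e^s\mathbb{T}^d)$ are negligible, since the cut-off in $S_d$ and the trajectory bounds established in Lemma \ref{lemmalinfinity} guarantee that both $\tilde S$ and $\nabla\tilde\Psi$, together with their derivatives, are uniformly small at distance $\sim e^s$ from the origin, and the weight $P\phi^l$ is under control there by \eqref{errores05}--\eqref{errores06}. Combined with the initial datum bound $E_{k-l}(s_0)\leq E_{l,0}/2$, Gronwall's inequality then closes the bootstrap \eqref{highordermestimate} once $k$ is chosen large enough that $O(1)/(\delta k)\ll 1/2$.
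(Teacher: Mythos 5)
Your proposal captures the main structural ideas of the paper's proof correctly: the pairing of each differentiated equation with the matching factor in the weighted $L^2(P\phi^l)$ inner product, the commutator $[\nabla,y\cdot\nabla]=\nabla$ together with the repulsivity conditions \eqref{eq:radial_repulsivity}--\eqref{eq:angular_repulsivity} producing the $-\delta k E_{k-l}$ damping, the role of $w=\tfrac12\log P$ and the identity $2P\nabla w=\nabla P$ in symmetrizing the highest-order $\Psi$--$w$ coupling so that the would-be derivative loss cancels, and the use of Lemma \ref{interpolation01} for the medium-split products. That much matches the paper's $B_1,\dots,B_7$ decomposition in spirit.

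The genuine gap is in your handling of the boundary of the expanding fundamental domain. You assert that the contributions from $\partial(e^s\mathbb{T}^d)$ are negligible because the trajectory bounds of Lemma \ref{lemmalinfinity} make $\tilde S,\nabla\tilde\Psi$ ``together with their derivatives'' uniformly small near $|y|\sim e^s$ and because the weight $P\phi^l$ is ``under control.'' Neither part holds. The boundary integrals that survive are those carrying the non-periodic factor $y\cdot\vec n$ and they involve the \emph{top-order} derivatives $\nabla^{k-l}\Psi$, $\nabla^{k-l-1}S$, $\nabla^{k-l}w$; the bootstrap gives only weighted $L^2$ control of these, and Lemma \ref{lemmalinfinity} only gives pointwise bounds on $\tilde S,\tilde\Psi$ themselves (and low derivatives by interpolation), so there is no uniform smallness of $\nabla^{k-l}\tilde\Psi$ on the boundary. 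Moreover, at $|y|\sim e^s$ one has $P\sim S_d^{1/\alpha}\sim e^{-2(r-1)s}$ while $\phi^l\sim e^{2ls}$, so $P\phi^l$ is actually \emph{large} there for $l>r-1$, contrary to your claim. The correct mechanism, which the paper uses, is exact cancellation: differentiating $E_{k-l}(s)=\int_{e^s\mathbb{T}^d}(\dots)$ in $s$ produces the domain-derivative boundary term $E_{\rm boundary}=\tfrac12\int_{\partial e^s\mathbb{T}^d}y\cdot\vec n(\dots)\phi^l P$, and this is cancelled exactly by the boundary terms from integrating by parts the transport $-y\cdot\nabla$ in $E_{k-l,S,1}$, $E_{k-l,\Psi,1}$, $E_{k-l,w,1}$. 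Your argument replaces this algebraic cancellation with an estimate that the bootstrap cannot support.

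A smaller imprecision: you dismiss the nonlinear piece $e^{(4-2r)s}|\nabla w|^2$ as harmless ``via \eqref{inter02},'' but \eqref{inter02} only covers the medium splits $j,j'\ge 3$. The extreme split $\nabla w\cdot\nabla^{k-l+1}w$ carries $k-l+1$ derivatives on $w$ and is not covered; in the paper it is part of the cancellation $B_2+B_3=O(E_{l,0})$, exploiting $2P\partial_iw=\partial_iP$ together with the corresponding extreme-split piece of $-2\nabla w\cdot\nabla\Psi$ in $\partial_s w$ (or alternatively is absorbed by the $(l-1)$-level bootstrap $E_{k-l+1}\le E_{l-1,0}$). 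Your description of the linear $\Delta w$ versus $\Delta\Psi$ cancellation is correct, but the full top-order balance requires these nonlinear pieces too.
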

\begin{proof}

We have 
\begin{align*}
&\quad\frac{1}{2}\partial_{s}E_{k-l}(s)\\
=&\frac{1}{2}\int_{e^s\mathbb{T}^{d}}(|\nabla^{k-l-1}S|^2+|\nabla^{k-l}\Psi|^2+e^{(4-2r)s}|\nabla^{k-l}w|^2)\phi^{l}\partial_{s}P\\
&+\int_{e^s\mathbb{T}^{d}}P\phi^{l}(\nabla^{k-l-1}S)(\nabla^{k-l-1}\partial_{s}S)\\
&+\int_{e^s\mathbb{T}^{d}}P\phi^{l}(\nabla^{k-l}\Psi)(\nabla^{k-l}\partial_{s}\Psi)\\
&+e^{(4-2r)s}\int_{e^s\mathbb{T}^{d}} P\phi^{l} (\nabla^{k-l}w)\partial_{s}\nabla^{k-l}w+(4-2r)e^{(4-2r)s}\int_{e^s\mathbb{T}^{d}}P\phi^{l}|\nabla^{k-l}w|^2\\
&+\frac{1}{2}\int_{\partial e^s\mathbb{T}^{d}} y\cdot \vec{n}(|\nabla^{k-l-1}S|^2+|\nabla^{k-l}\Psi|^2+e^{(4-2r)s}|\nabla^{k-l}w|^2)\phi^{l}P \\
&=E_{k-l,P}+E_{k-l,S}+E_{k-l,\Psi}+O(E_{l,0})+E_{k-l,w}+E_{boundary}.
\end{align*}

From equation  \eqref{weightdefi}, we have
\begin{align*}
E_{k-l,P}=\frac{1}{2}\int_{e^s\mathbb{T}^{d}} (|\nabla^{k-l-1}S|^2+|\nabla^{k-l}\Psi|^2+e^{(4-2r)s}|\nabla^{k-l}w|^2)\phi^{l}[-2\nabla P\cdot \nabla \Psi-2P\Delta\Psi-y\cdot\nabla P+\frac{1-r}{\alpha}P].
\end{align*}
By interpolation lemma \ref{interpolation01}, we only need to control several top order terms for $E_{k-l,S}$, $E_{k-l,\Psi}$, and $E_{k-l,w}.$ We have 
\begin{align*}
\quad E_{k-l,S}&=\int_{e^s\mathbb{T}^{d}}P\phi^{l}\nabla^{k-l-1}S(-2\sum_{i=1}^{d}\nabla^{k-l-1}\partial_{i}S\cdot \partial_{i}\Psi-2\alpha S\Delta\nabla^{k-l-1}\Psi-\sum_{i=1}^{d}y_i\cdot \nabla^{k-l-1}\partial_{i}S)\\
&\quad+(k-l-1)\int_{e^s\mathbb{T}^{d}}P\phi^{l}\nabla^{k-l-1}S(-2\sum_{i=1}^{d}\nabla^{k-l-2}\p_{i}S\p_{i}\nabla\Psi-2\alpha \nabla S\nabla^{k-l-2}\Delta\Psi-\nabla^{k-l-1}S)\\
&\quad+O(E_{l,0})\\
&=E_{k-l,S,1}+E_{k-l,S,2}+O(E_{l,0}),
\end{align*}
where we have absorbed the term corresponding to $(r-1)\nabla^{k-l-1}S$ in $O(E_{l,0}).$
Similarly, 
\begin{align*}
&\quad E_{k-l,\Psi}\\
&=\int_{e^s\mathbb{T}^{d}}\nabla^{k-l}\Psi(-2\sum_{i=1}^{d}(\nabla^{k-l}\partial_{i}\Psi)\p_{i}\Psi-2\alpha S \nabla^{k-l}S-\sum_{i=1}^{k}y_i\partial_{i}\nabla^{k-l}\Psi)P\phi^{l}\\
&\quad+ (k-l)\int_{e^s\mathbb{T}^{d}}\nabla^{k-l}\Psi (-2\sum_{i=1}^{d}\nabla^{k-l-1}\partial_{i}\Psi \partial_{i}\nabla \Psi-2\alpha \nabla^{k-l-1}S\nabla S -\nabla^{k-l}\Psi)P\phi^{l}\\
&\quad+e^{(4-2r)s}\int_{e^s\mathbb{T}^{d}}\nabla^{k-l}\Psi(\nabla^{k-l}\Delta w+2\sum_{i=1}^{d}\partial_{i}w\nabla^{k-l}\p_{i}w)P\phi^{l}\\
&\quad+2(k-l)e^{(4-2r)s}\int_{e^s\mathbb{T}^{d}}\nabla^{k-l}\Psi(\sum_{i=1}^{d}\nabla^{k-l-1}\p_{i}w\p_{i}\nabla w)P\phi^{l}\\
&\quad +O(E_{l,0})\\
&=E_{k-l,\Psi,1}+E_{k-l,\Psi,2}+E_{k-l,\Psi,3}+E_{k-l,\Psi,4}+O(E_{l,0})
\end{align*}
and
\begin{align*}
E_{k-l,w}&=e^{(4-2r)s}\int_{e^s\mathbb{T}^{d}}\nabla^{k-l}w(-2\sum_{i=1}^{d}\p_{i}w\p_{i}\nabla^{k-l}\Psi-2\sum_{i=1}^{d}\p_{i}\nabla^{k-l}w\p_{i}\Psi-\Delta\nabla^{k-l}\Psi-\sum_{i=1}^{d}y_i\p_{i}\nabla^{k-l}w)P \phi^l\\
&\quad+e^{(4-2r)s}(k-l)\int\nabla^{k-l}w(-2\sum_{i=1}^{d}\partial_{i}\nabla^{k-l-1}w\partial_{i}\nabla \Psi-\nabla^{k-l}w)P\phi^{l}\\
&\quad-2(k-l)e^{(4-2r)s}\int_{e^s\mathbb{T}^{d}}\nabla^{k-l}w(\sum_{i=1}^{d}\partial_{i}\nabla w\partial_{i}\nabla^{k-l-1}\Psi)P \phi^l+O(E_{l,0})\\
&=E_{k-l,w,1}+E_{k-l,w,2}+E_{k-l,w,3}+O(E_{l,0})
\end{align*}
Since $k\ll s$, we have 
\begin{equation}\label{hdecayestimate01}
    E_{k-l,\Psi, 4}=O(E_{l,0}); \quad E_{k-l,w, 3}=O(E_{l,0}).
\end{equation}
Now we integrate by parts in $E_{k-l,S,1}$, $E_{k-l,\Psi,1}$ and $E_{k-l,w,1}$. We have
\begin{align*}
E_{k-l,S,1}=&-2\alpha \int_{e^s\mathbb{T}^{d}}S \Delta \nabla^{k-l-1}\Psi\cdot \nabla^{k-l-1}S P\phi^{l}\\
&\quad + \int_{e^s\mathbb{T}^{d}}|\nabla^{k-l-1}S|^2(P\Delta\Psi+\nabla \Psi\cdot \nabla P+\frac{1}{2}y\cdot \nabla P)\phi^{l}\\
&\quad + \int_{e^s\mathbb{T}^{d}}|\nabla^{k-l-1}S|^2(\frac{1}{2}P y\cdot\nabla(\phi^{l})+P \nabla\Psi \cdot  \nabla(\phi^{l}))\\
&\quad-\frac{1}{2}\int_{\partial e^s\mathbb{T}^{d}} y\cdot \vec{n}|\nabla^{k-l-1}S|^2\phi^{l}P \\
&\quad+O(E_{l,0}).
\end{align*}
Similarly, we have 
\begin{align*}
E_{k-l,\Psi,1}&=-2\alpha\int\nabla^{k-l}\Psi\nabla^{k-l}S SP\phi^{l}\\
&+\int_{e^s\mathbb{T}^{d}}|\nabla^{k-l}\Psi|^2(\Delta\Psi P+\nabla\Psi\cdot \nabla P+\frac{1}{2}y\cdot \nabla P)\phi^{l}\\
&+\int_{e^s\mathbb{T}^{d}}|\nabla^{k-l}\Psi|^2(\frac{1}{2}Py\cdot \nabla(\phi^{l})+P\nabla\Psi\cdot \nabla (\phi^{l}))\\
&-\frac{1}{2}\int_{\partial e^s\mathbb{T}^{d}} y\cdot \vec{n}|\nabla^{k-l}\Psi|^2\phi^{l}P \\
&+O(E_{l,0}),
\end{align*}
and 
\begin{align*}
&\quad 
E_{k-l,w,1}\\
&=e^{(4-2r)s}\int_{e^s\mathbb{T}^{d}}\nabla^{k-l}w(-2\sum_{i=1}^{d}\partial_{i}w\partial_{i}\nabla^{k-l}\Psi-\Delta \nabla^{k-l}\Psi)P\phi^{l}\\
&\quad+e^{(4-2r)s}\int_{e^s\mathbb{T}^{d}}|\nabla^{k-l}w|^{2}(\Delta\Psi P+\nabla\Psi \cdot \nabla P+\frac{1}{2}y\cdot P)\phi^{l}\\
&\quad +e^{(4-2r)s}\int_{\partial e^s\mathbb{T}^{d}}|\nabla^{k-l}w|^2(\frac{1}{2}Py\cdot \nabla (\phi^l)+P\nabla\Psi\cdot \nabla(\phi^{l}))\\
&\quad -\frac{1}{2}\int_{\partial e^s\mathbb{T}^{d}} y\cdot \vec{n}e^{(4-2r)s}|\nabla^{k-l}w|^2\phi^{l}P  \\
&\quad +O(E_{l,0}).
\end{align*}
Then 
\begin{align*}
&\quad E_{k-l,S,1}+E_{k-l,\Psi,1}+E_{k-l,w,1}+E_{k-l,P}+E_{k-l,\Psi,3}+E_{boundary}\\
&=\underbrace{-2\int_{e^s\mathbb{T}^{d}}\alpha S\Delta \nabla^{k-l-1}\Psi \nabla^{k-l-1}S P\phi^{l}-2\alpha \int_{e^s\mathbb{T}^{d}} \nabla^{k-l}\Psi \nabla^{k-l}S SP\phi^{l}}_{B_1}\\
&\underbrace{\quad+e^{(4-2r)s}\int_{e^s\mathbb{T}^{d}}\nabla^{k-l}w(-2\sum_{i=1}^{d}\p_{i}w\p_{i}\nabla^{k-l}\Psi -\Delta \nabla^{k-l}\Psi)P\phi^{l}}_{B_2}\\
&\underbrace{\quad+e^{(4-2r)s}\int_{e^s\mathbb{T}^{d}}\nabla^{k-l}\Psi(\nabla^{k-l}\Delta w+2\sum_{i=1}^{d}\partial_{i}w\nabla^{k-l}\p_{i}w)P\phi^{l}}_{B_3}\\
&\underbrace{\quad+\int_{e^s\mathbb{T}^{d}}(|\nabla^{k-l-1}S|^2+|\nabla^{k-1}\Psi|^2+|\nabla^{k-1}w|^2e^{(4-2r)s})(\frac{1}{2}Py\cdot \nabla \phi^{l}+P\nabla \Psi\cdot \nabla (\phi^{l}))}_{B_{4}}+O(E_{l,0}).
\end{align*}
Notice that $B_1$, $B_{2}$ and $B_{3}$ contain terms with derivative more than $k-l$ th order. We use integration by parts to treat $B_1$ and show the cancellation between $B_2$ and $B_3$. From integration by parts, we have
\begin{align*}
B_1&=2\alpha\sum_{i=1}^{d}\int_{e^s\mathbb{T}^{d}}\partial_{i}\nabla^{k-l-1}\Psi\partial_{i}(PS\phi^{l})\nabla^{k-l-1}S\\
&\leq 2\alpha \int_{e^s\mathbb{T}^{d}}|\nabla^{k-l-1}S||\nabla^{k-l}\Psi||\nabla(PS\phi^{l})|\\
&\leq \alpha \int_{e^s\mathbb{T}^{d}}(|\nabla^{k-l-1}S|^2+|\nabla^{k-l}\Psi|)^2|PS|\nabla(\phi^{l})|\\
&+O(E_{l,0}),
\end{align*}
when we use $\|\frac{\nabla P}{P}\|_{L^{\infty}}=\|\nabla w\|_{L^{\infty}}\lesssim 1$ and Cauchy-Schwartz in the last inequality. Moreover,
\begin{align*}
B_2&=e^{(4-2r)s}\int_{e^s\mathbb{T}^{d}}\nabla^{k-l}w\left(-2\sum_{i=1}^{d}\partial_{i}w\partial_{i}\nabla^{k-l}\Psi-\Delta\nabla^{k-l}\Psi\right)P\phi^{l}\\
&=e^{(4-2r)s}\left(\int\sum_{i=1}^{d}\nabla^{k-l}\p_{i}\Psi\nabla^{k-l}\p_{i}w P\phi^{l}\right.\\
&\quad +\left.\int_{e^s\mathbb{T}^{d}}(\sum_{i=1}^{d}\nabla^{k-l}w\p_{i}\nabla^{k-l}\Psi)(\p_{i}P\phi^{l}+P\partial_{i}(\phi^{l}))-2\int(\sum_{i=1}^{d}\p_{i}w\p_{i}\nabla^{k-l}\Psi)\nabla^{k-l}wP\phi^{l}\right).
\end{align*}
Since $2P\p_{i}w=\p_{i}P$, we have
\begin{align*}
B_{2}&=e^{(4-2r)s}\int( \sum_{i=1}^{d}\nabla^{k-l}\p_{i}\Psi\nabla^{k-l}\p_{i}w) P\phi^{l}+e^{(4-2r)s}\int_{e^s\mathbb{T}^{d}}(\sum_{i=1}^{d}\nabla^{k-l}w\p_{i}\nabla^{k-l}\Psi)P\partial_{i}(\phi^{l})\\
&=e^{(4-2r)s}\int_{e^s\mathbb{T}^{d}}(\sum_{i=1}^{d}\nabla^{k-l}\p_{i}\Psi\nabla^{k-l}\p_{i}w )P\phi^{l}+O(E_{l,0}),
\end{align*}
where we use $|E_{k-l+1,0}(s)|\leq E_{l-1,0}\ll O(E_{l,0})$ from bootstrap assumption \eqref{highordermestimate}.
Similarly, 
\begin{align*}
B_{3}=-e^{(4-2r)s}\int( \sum_{i=1}^{d}\nabla^{k-l}\p_{i}\Psi\nabla^{k-l}\p_{i}w) P\phi^{l}+O(E_{l,0}).
\end{align*}
Hence $B_2+B_3=O(E_{l,0})$.
In conclusion, we have
\begin{align}\label{hdecayestimate02}
&\quad E_{k-l,S,1}+E_{k-l,\Phi,1}+E_{k-l,w,1}+E_{k-l,P}+E_{k-1,\Phi,3}+E_{boundary}\\\nonumber
&\leq\int_{e^s\mathbb{T}^{d}}(|\nabla^{k-l-1}S|^2+|\nabla^{k-l}\Psi|^2+e^{(4-2r)s}|\nabla^{k-l}w|^2)(\frac{1}{2} P|y||\nabla (\phi^{l})|+\alpha S P|\nabla (\phi^{l})|+P|\nabla \Psi| |\nabla(\phi^{l})|)\\\nonumber
&\quad +O(E_{l,0}).
\end{align}
Now we control the remaining terms : $E_{k-l,S,2}+E_{k-l,\Psi,2}+E_{k-l,w,2}$. Those terms give the decay. 
We have 
\begin{align*}
&\quad E_{k-l,S,2}+E_{k-l,\Psi,2}+E_{k-l,w,2}\\
&=(k-l)\int_{e^s\mathbb{T}^{d}}\nabla^{k-l-1}S[-2\sum_{i=1}^{d}\nabla^{k-l-2}\partial_{i}S\partial_{i}\nabla\Psi-2\alpha \nabla S\nabla^{k-l-2}\Delta\Psi-\nabla^{k-l-1}S]\phi^{l}P\\
&\qquad +\nabla^{k-l}\Psi[-2\sum_{i=1}^{d}\nabla^{k-l-1}\partial_{i}\Psi\nabla\partial_{i}\Psi-2\alpha\nabla^{k-l-1}S \nabla S-\nabla^{k-l}\Psi]\phi^{l}P\\
&\qquad+e^{(4-2r)s}\nabla^{k-l}w[-2\sum_{i=1}^{d}\partial_{i}\nabla^{k-l-1}w\partial_{i}\nabla \Psi-\nabla^{k-l}w]\phi^{l}P+O(E_{l,0}).
\end{align*}
By using \eqref{cutoffdensity}, \eqref{perturb defi}
\begin{align*}
\p_{i}\nabla \Psi&=\p_{i}\nabla (\Psi_{p}\hat{\mathfrak{X}}_{d}(ye^{-s})+\tilde{\Psi})=\Breve{X} \p_{i}(\bar{U}_{p,R}\frac{y}{R})+O(\delta_{\rm{low}}^{\frac{1}{2}})+O(e^{-s}), \\
\nabla S&=\p_{i}(\hat{\mathfrak{X}}_{d}(ye^{-s})S_{p}+\tilde{\mathfrak{X}}_{d}(ye^{-s})e^{-(r-1)s}+\tilde{S})=\Breve{X}\partial_{R}S_{p}\frac{y}{R}+O(\delta_{\rm{low}}^{\frac{1}{2}})+O(e^{-s}),
\end{align*}
 with $\Breve{X}=\hat{\mathfrak{X}}_{d}(ye^{-s})$,
and $s\gg \frac{1}{k}\gg \delta_{\rm{low}}^{\frac{1}{2}}$, we have 
\begin{align*}
&E_{k-l,S,2}+E_{k-l,\Psi,2}+E_{k-l,w,2}\\
&=\underbrace{(k-l)\sum_{j=1}^{d}\int_{e^s\mathbb{T}^{d}}P\phi^{l}\nabla^{k-l-2}\partial_{j}S[-2\sum_{i=1}^{d}\nabla^{k-l-2}\partial_{i}S\Breve{X}\partial_{i}(\bar{U}_{p,R}\frac{y_j}{R})-\nabla^{k-l-2}\partial_{j}S]}_{B_5}\\
&\underbrace{\quad +  P\phi^{l}\nabla^{k-l-1}\partial_{j}\Psi[-2\sum_{i=1}^{d}\nabla^{k-2-l}\p_{i}\Psi\Breve{X}\partial_{i}(\bar{U}_{p,R}\frac{y_j}{R})-\nabla^{k-l-1}\partial_{j}\Psi]}_{B_5}\\
&\underbrace{\quad +e^{(4-2r)s}P\phi^{l}\nabla^{k-l-1}\partial_{j}w[-2\sum_{i=1}^{d}\nabla^{k-l-1}\partial_{i}w\partial_{i}(\bar{U}_{R}\frac{y_j}{R})-\nabla^{k-l-1}\partial_{j}w]}_{B_5}\\
&\underbrace{+(k-l)\sum_{j=1}^{d}\int_{e^s\mathbb{T}^{d}} P\phi^{l}\nabla^{k-l-1}\partial_{j}\Psi[-2\alpha \Breve{X}\partial_{R}S_{d}\frac{y_j}{R}\nabla^{k-l-1}S]}_{B_6}\\
&\underbrace{+(k-l)\sum_{j=1}^{d}\int_{e^s\mathbb{T}^{d}}P\phi^{l}\nabla^{k-l-2}\partial_{j}S[-2\alpha \Breve{X}\partial_{R}S_{d}\frac{y_j}{R}\nabla^{k-l-2}\Delta\Psi]}_{B_{7}}\\
&+O(E_{l,0})
\end{align*}
Now we treat the terms separately. Notice that from \eqref{eq:radialcomp}, we have \begin{align*}
B_{5}&=(k-l)\int_{e^s\mathbb{T}^{d}}P\phi^{l}(|\nabla^{k-l-2}\nabla_{R}S|^2+|\nabla^{k-l-1}\nabla_{R}\Psi|^2+e^{-(4-2r)s}|\nabla^{k-l-1}\nabla_{R}w|^2)(-1-2\Breve{X}\partial_{R}\bar{U}_{p,R})\\\nonumber
 &\quad + (k-l)\int_{e^s\mathbb{T}^{d}}P\phi^{l}(|\nabla^{k-l-2}\nabla_{\theta}S|^2+|\nabla^{k-l-1}\nabla_{\theta}\Psi|^2+e^{-(4-2r)s}|\nabla^{k-l-1}\nabla_{{\theta}}w|^2)(-1-2\Breve{X}\frac{\bar{U}_{p,R}}{R})
\end{align*}
Moreover, we have 
\begin{align*}
&|\sum_{j=1}^{d}\nabla^{k-l-1}\partial_{j}\Psi\partial_{R}S_{d}\frac{y_j}{R}\nabla^{k-l-1}S|\\
=&|\nabla^{k-l-1}\partial_{R}\Psi\partial_{R}S_{d}\nabla^{k-l-1}S|\\
\leq& |\nabla^{k-l-1}\partial_{R}\Psi||\partial_{R}S_{d}||\nabla^{k-l-1}S|.
 \end{align*}
 Then \begin{align*}
 B_{6}\leq  (k-l)\int_{e^s\mathbb{T}^{d}}P\phi^{l}(|\nabla^{k-l-2}\nabla_{R}S|^2+|\nabla^{k-l-2}\nabla_{\theta}S|^2+|\nabla^{k-l-1}\nabla_{R}\Psi|^2)(2\alpha \Breve{X}|\partial_{R}S_{d}|)\\\nonumber
 \end{align*}

For $B_7$, we use integration by parts and have
\begin{align*}
&\quad \sum_{j=1}^{d}\int_{e^s\mathbb{T}^{d}}P\phi^{l}\nabla^{k-l-2}\partial_{j}S[-2\alpha \Breve{X}\partial_{R}S_{d}\frac{y_j}{R}\nabla^{k-l-2}\Delta\Psi]\\
&=\sum_{i=1}^{d}\sum_{j=1}^{d}\int_{e^s\mathbb{T}^{d}}\partial_{j}( P\phi^{l} 2\alpha \Breve{X}\partial_{R}S_{d}\frac{y_j}{R})\nabla^{k-l-2}S\nabla^{k-l-2}\p_{i}^{2}\Psi\\
&\quad +\sum_{i=1}^{d}\sum_{j=1}^{d}\int_{e^s\mathbb{T}^{d}}P\phi^{l}(2\alpha \Breve{X}\partial_{R}S_{d}\frac{y_j}{R})\nabla^{k-l-2}S\nabla^{k-l-2}\p_{i}^{2}\p_{j}\Psi\\
&=\sum_{i=1}^{d}\sum_{j=1}^{d}\int_{e^s\mathbb{T}^{d}}\partial_{j} (P\phi^{l} 2\alpha \Breve{X}\partial_{R}S_{d}\frac{y_j}{R})\nabla^{k-l-2}S\nabla^{k-l-2}\p_{i}^{2}\Psi\\
&\quad-\sum_{i=1}^{d}\sum_{j=1}^{d}\int_{e^s\mathbb{T}^{d}}\p_{i} (P\phi^{l}2\alpha \Breve{X}\partial_{R}S_{d}\frac{y_j}{R})\nabla^{k-l-2}S\nabla^{k-l-2}\p_{i}\p_{j}\Psi\\
&\quad-\sum_{i=1}^{d}\sum_{j=1}^{d}\int_{e^s\mathbb{T}^{d}}P\phi^{l}(2\alpha \Breve{X}\partial_{R}S_{d}\frac{y_j}{R})\nabla^{k-l-2}\partial_{i}S\nabla^{k-l-2}\p_{i}\p_{j}\Psi\\
&=O(E_{l,0})-\sum_{i=1}^{d}\sum_{j=1}^{d}\int_{e^s\mathbb{T}^{d}}P\phi^{l}(2\alpha \Breve{X}\partial_{R}S_{d}\frac{y_j}{R})\nabla^{k-l-2}\partial_{i}S\nabla^{k-l-2}\p_{i}\p_{j}\Psi\\\
&\leq O(E_{l,0})+\int_{e^s\mathbb{T}^{d}} P\phi^{l} |(2\alpha \Breve{X}\partial_{R}S_{d})|(\frac{|\nabla^{k-l-1}S|^2}{2}+\frac{|\nabla^{k-l-1}\partial_{R}\Psi|^2}{2}).
\end{align*}
Here in the last step we use $\left|\frac{\partial_{R}S_{d}}{R}\right|\lesssim 1$ \eqref{eq:regularityatorigin}, $\left|\frac{\nabla P}{P}\right|\lesssim |\nabla w|\lesssim 1$ , interpolation lemma \ref{interpolation01} and Cauchy-Schwartz inequality.
 Therefore 
 \begin{align}\label{hdecayestimate03}
 &\quad E_{k-l,S,2}+E_{k-l,\Psi,2}+E_{k-l,w,2}\\\nonumber
 &\leq (k-l)\int_{e^s\mathbb{T}^{d}}P\phi^{l}(|\nabla^{k-l-2}\nabla_{R}S|^2+|\nabla^{k-l-1}\nabla_{R}\Psi|^2+e^{-(4-2r)s}|\nabla^{k-l-1}\nabla_{R}w|^2)(-1-2\Breve{X}\partial_{R}\bar{U}_{p,R}+2\alpha \Breve{X}|\partial_{R}S_{d}|)\\\nonumber
 &\quad + (k-l)\int_{e^s\mathbb{T}^{d}}P\phi^{l}(|\nabla^{k-l-2}\nabla_{\theta}S|^2+|\nabla^{k-l-1}\nabla_{\theta}\Psi|^2+e^{-(4-2r)s}|\nabla^{k-l-1}\nabla_{{\theta}}w|^2)(-1-2\Breve{X}\frac{\bar{U}_{p,R}}{R}+2\alpha \Breve{X}|\partial_{R}S_{d}|)+O(E_{l,0}).
 \end{align}
 Combining \eqref{hdecayestimate01}, \eqref{hdecayestimate02} and \eqref{hdecayestimate03}, we have 
 \begin{align*}
 \partial_{s}E_{k-l}(s)\leq -\eta k E_{k-l}(s)+O(E_{l,0}),
 \end{align*}
 as long as  we have
 \begin{align*}
 \inf_{y}(1+2\Breve{X}\partial_{R}\bar{U}_{p,R}-2\alpha \Breve{X}|\partial_{R}S_{d}|-\frac{1}{2}|y|\frac{|\nabla (\phi^{l})|}{(k-l)\phi^{l}}-\alpha S\frac{|\nabla (\phi^{l})|}{(k-l)\phi^{l}}-|\nabla \Psi| \frac{|\nabla (\phi^{l})|}{(k-l)\phi^{l}})>0,
 \end{align*}
 and
  \begin{align*}
 \inf_{y}(1+2\Breve{X}\frac{\bar{U}_{p,R}}{R}-2\alpha \Breve{X}|\partial_{R}S_{d}|-\frac{1}{2}|y|\frac{|\nabla (\phi^{l})|}{(k-l)\phi^{l}}-\alpha S \frac{|\nabla (\phi^{l})|}{(k-l)\phi^{l}}-|\nabla \Psi| \frac{|\nabla (\phi^{l})|}{(k-l)\phi^{l}})>0,
 \end{align*}
 which hold by \eqref{eq:radial_repulsivity}, \eqref{eq:angular_repulsivity}, the decay property \eqref{eq:profiles_decay}, $\frac{l}{k-l}\leq \frac{1}{5}$ and $\frac{|\nabla \phi|}{\phi}\leq 2$ from \eqref{weightdefi02}.
\end{proof}
\subsection{Topological argument section for the unstable modes}
Now we are let to show $P_{uns}(\Psi_{t}, S_{t}) \leq \delta^{\frac{3}{2}}e^{-\epsilon(s-s_0)}.$ This comes from a standard topological argument. From the same topological argument in \cite[Section 3.5]{CaoLabora-GomezSerrano-Shi-Staffilani:nonradial-implosion-compressible-euler-ns-T3R3}, we have the following proposition.
\begin{prop}\label{prop:topo}
There exists specific initial conditions $\{a_{i}\}$ in \eqref{initial data}, such that
\[
\|P_{uns}(\Psi_{t}(\cdot,s),S_{t}(\cdot,s))\|_{X} \leq \delta_{0}^{\frac{3}{2}}e^{-\frac{4}{3}\epsilon(s-s_0)}.
\]
\end{prop}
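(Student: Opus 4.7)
My plan is to run a standard Brouwer-type topological argument, identical in spirit to \cite[Section 3.5]{CaoLabora-GomezSerrano-Shi-Staffilani:nonradial-implosion-compressible-euler-ns-T3R3}, to select the initial unstable coefficients $a=(a_1,\ldots,a_N)$. The key inputs have all been prepared: Proposition \ref{prop:maxdis} gives the spectral decomposition $X=V_{\rm sta}\oplus V_{\rm uns}$ together with the dissipativity estimate on $V_{\rm uns}$ in the metric $B$; Sections \ref{subsubsec:linear}--\ref{lowderivativeenergy} (and the high-derivative subsection) prove that as long as the unstable-mode bootstrap holds, \emph{all other} bootstrap bounds (low-derivative $L^\infty$, low-derivative $\dot H^{m'}$, $w$-energy, and the tower $E_{k-l}$) close with strict improvement. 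In particular, Section \ref{subsubsec:linear} gives the forcing bound $\|\mathcal F_t\|_X \lesssim \delta_{\rm low}^{7/4} e^{-\frac74 \eps(s-s_0)}$, which is the quantitative ingredient we will need.

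The setup is as follows. Parametrize initial data by $a$ in the closed Euclidean ball $\mathcal B_N = \{a\in\mathbb R^N : \|\sum_i a_i (v_{i,\Psi},v_{i,s})\|_X \leq \delta_{\rm low}^{3/2}\}$, with the rest of the initial data fixed as in \eqref{initial data}. For each $a\in\mathcal B_N$, define the exit time
\begin{equation*}
s^\ast(a)=\inf\Bigl\{s\geq s_0 : \|P_{\rm uns}(\Psi_t(a,\cdot),S_t(a,\cdot))(s)\|_X \geq \delta_{\rm low}^{3/2} e^{-\frac{4}{3}\eps(s-s_0)}\Bigr\}.
\end{equation*}
If $s^\ast(a)=\infty$ for some $a$ we are finished, so assume by contradiction that $s^\ast(a)<\infty$ for every $a\in\mathcal B_N$. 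The goal is to manufacture a continuous retraction $\mathcal B_N\to\partial\mathcal B_N$, contradicting Brouwer.

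The decisive step is transversality: at $s=s^\ast(a)$ one must show $\partial_s\bigl(\|P_{\rm uns}\|_B^2 \, e^{\frac{8}{3}\eps(s-s_0)}\bigr)>0$ strictly. Differentiating \eqref{eq:truncated} and projecting gives $\partial_s P_{\rm uns}(\Psi_t,S_t)=\mathcal L_t P_{\rm uns}(\Psi_t,S_t)+P_{\rm uns}\mathcal F_t$. Pairing with $P_{\rm uns}(\Psi_t,S_t)$ in the $B$-metric, using \eqref{eq:decomposition_condition} on $V_{\rm uns}$, and the fact that $B$ is equivalent to $X$ on the finite-dimensional subspace $V_{\rm uns}$, we obtain at threshold
\begin{equation*}
\tfrac12 \partial_s \|P_{\rm uns}\|_B^2 \geq -\tfrac{6}{10}\delta_g \|P_{\rm uns}\|_B^2 - C\|P_{\rm uns}\|_B \cdot \|\mathcal F_t\|_X \gtrsim \bigl(-\tfrac{6}{10}\delta_g - C\delta_{\rm low}^{1/4} e^{-(\frac{7}{4}-\frac{4}{3})\eps(s-s_0)}\bigr)\|P_{\rm uns}\|_B^2.
\end{equation*}
Since we have arranged $\tfrac{4}{3}\eps \ll \delta_g$ and $\delta_{\rm low}\ll 1$, the right-hand side is strictly larger than $-\tfrac{4}{3}\eps\|P_{\rm uns}\|_B^2$, which is precisely the transversality condition. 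Continuous dependence of $(\Psi_t,S_t)$ on $a$ within the bootstrap regime (standard for the semilinear truncated equation in $X$) then makes $s^\ast$ continuous. The exit map $\Phi(a):=e^{\frac{4}{3}\eps(s^\ast(a)-s_0)}\delta_{\rm low}^{-3/2}\, P_{\rm uns}(\Psi_t,S_t)(s^\ast(a))$ is a continuous map $\mathcal B_N\to\partial\mathcal B_N$; and on $\partial\mathcal B_N$ the initial unstable data already saturates the threshold, so $s^\ast(a)=s_0$ and $\Phi|_{\partial\mathcal B_N}=\mathrm{id}$, yielding the forbidden retraction.

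The main obstacle is bookkeeping in the transversality estimate: one must ensure that the projection $P_{\rm uns}\mathcal F_t$ really is of smaller size than $\|P_{\rm uns}\|_B$ at threshold, which forces the choice $\eps \ll \delta_g$ and $\delta_{\rm low}$ small enough that $\delta_{\rm low}^{7/4} e^{-\frac{7}{4}\eps(s-s_0)} \ll \delta_{\rm low}^{3/2}e^{-\frac{4}{3}\eps(s-s_0)}\cdot\delta_g$ uniformly in $s\geq s_0$. The new NLS contribution to $\mathcal F_t$ is the quantum-pressure piece $e^{(4-2r)s}\mathcal D$, but its $X$-norm was already estimated in \eqref{eq:dumas2} as $O(\delta_{\rm low}^{7/4}e^{(4-2r)(s-s_0)})$, which is comfortably smaller than needed for the balance above. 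With those choices locked in as in \eqref{choiceofparameter}, the argument is identical to the compressible case and produces the desired $a^\star\in\mathcal B_N$.
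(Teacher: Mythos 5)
The paper's own ``proof'' is a one-sentence citation to the referenced paper's Section~3.5, so your outline fills in exactly the intended Brouwer exit-time mechanism, and you correctly observe that the new NLS forcing piece $e^{(4-2r)s}\mathcal D$ is harmless thanks to \eqref{eq:dumas2}. The structure (exit time $s^\ast(a)$, transversality, continuous exit map, no-retraction contradiction) is the right one.

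However, your transversality step contains a sign error that, as written, kills the argument. You derive
\[
\tfrac12\partial_s\|P_{\rm uns}\|_B^2 \gtrsim \Bigl(-\tfrac{6}{10}\delta_g - C\,\delta_{\rm low}^{1/4}e^{-\frac{5}{12}\eps(s-s_0)}\Bigr)\|P_{\rm uns}\|_B^2
\]
and then assert that the hierarchy $\tfrac43\eps\ll\delta_g$ makes the right-hand side exceed $-\tfrac43\eps\|P_{\rm uns}\|_B^2$. The implication runs the other way: $-\tfrac{6}{10}\delta_g - C\delta_{\rm low}^{1/4}e^{-\frac{5}{12}\eps(s-s_0)} > -\tfrac43\eps$ is equivalent to $\tfrac43\eps > \tfrac{6}{10}\delta_g + C\delta_{\rm low}^{1/4}e^{-\frac{5}{12}\eps(s-s_0)}$, which requires $\eps\gtrsim\delta_g$, not $\eps\ll\delta_g$. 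Geometrically, the dissipativity floor $-\tfrac{6}{10}\delta_g$ in Proposition~\ref{prop:maxdis} bounds how fast the unstable block can \emph{decay}; for the shrinking trap $\delta_{\rm low}^{3/2}e^{-\frac43\eps(s-s_0)}$ to be an outward-transversal barrier, the trap must contract \emph{faster} than the slowest mode of $V_{\rm uns}$ can decay, which is exactly $\tfrac43\eps>\tfrac{6}{10}\delta_g$. With $\eps\ll\delta_g$ the lower bound you wrote lies strictly \emph{below} $-\tfrac43\eps\|P_{\rm uns}\|_B^2$ and outward transversality does not follow; the same issue undermines your claim that $s^\ast(a)=s_0$ for $a\in\partial\mathcal B_N$, since that too needs the outward condition at $s=s_0$. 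You should pin down the $\eps$ versus $\delta_g$ relation (it is conspicuously absent from \eqref{choiceofparameter}) and reconcile it with the Duhamel estimate \eqref{eq:hmart}, which converts the stable semigroup rate $\delta_g/2$ into the decay $e^{-\frac32\eps(s-s_0)}$ and therefore pulls in the opposite direction.
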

Combining Propositions \ref{prop:bootstrap} and \ref{prop:topo}, we have Theorem \ref{th:periodic}.

\section{Appendix}

\begin{lemma} \label{lemma:GN_general01} Let $f: e^s\mathbb{T}^d \to \mathbb R$ be  a periodic  function on $H^{m}(e^s\mathbb{T}^d)$ with $L\geq 1$ . Let $I_{-1} = [0, 1]$ and $I_j = [2^{j}, 2^{j+1}]$. Let $\phi (x), \psi(x) \geq 0$ be weights such that there exist values $\phi_j, \psi_j$ and a constant $C$ satisfying that $\phi (x) \in [\phi_j/C, C \phi_j]$ and $\psi(x) \in [\psi_j/C, C\psi_j]$ for every $x$ with $|x| \in I_j$. Moreover, let us assume the weights are approximately $1$ at the origin, that is $\phi_{-1} = \psi_{-1} = \phi_0 = \psi_0 = 1$.  Let $1 \leq i \leq m$. Assume that the parameters $p, q, \bar{r}, \theta$ satisfy:
\begin{equation} \label{eq:GN_conditions}
    \frac{1}{\bar{r}} = \frac{i}{d} + \theta \left( \frac{1}{q} - \frac{m}{d} \right) + \frac{1-\theta}{p}, \qquad \mbox{ and } \qquad \frac{i}{m} \leq \theta < 1,
\end{equation}
Letting $\bar{r} = \infty$, we have

For $\bar{r} \in [1,\infty)$, any $\eps > 0$, and under the extra assumption:
\begin{equation}\label{eq:GN_extracond}
    \left( \frac{\phi(x)}{\langle x \rangle \psi(x) }\right)^{m\theta} \langle x \rangle^{3\theta (1/q-1/p)} \lesssim \langle x\rangle ^{-\epsilon}
\end{equation}

we have the weighted Gagliardo-Nirenberg inequalities:
\begin{equation} \label{eq:GNresult}
    \| \psi^{m(1-\theta)} \phi^{m\theta} \nabla^i f \|_{L^{\bar{r}}} \les \| \psi^m f \|_{L^p}^{1-\theta} \| \phi^m \nabla^m f \|_{L^q}^{\theta} + \| \psi^m f \|_{L^p}
\end{equation}
The implicit constants \eqref{eq:GNresult} may depend on the parameters $p, q, i, m, \theta, C$, (as well as $r$, $\eps >0$, $l$ for \eqref{eq:GNresult}) but they are independent of $f$ and $\psi$, $\phi$ and $e^s$. 
\end{lemma}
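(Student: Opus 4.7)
The strategy is a dyadic decomposition in space combined with the classical Gagliardo-Nirenberg inequality on each annulus. Decompose $e^{s}\mathbb{T}^{d} = B(0,2) \cup \bigcup_j A_j$ with $A_j = \{2^j \leq |x| \leq 2^{j+1}\}$ (running up to $j_{\max}\sim s$). On each $A_j$ the hypotheses give $\phi(x) \sim \phi_j$ and $\psi(x) \sim \psi_j$, so the weights can be pulled out as scalars and one is reduced to an unweighted inequality on each piece.

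On each $A_j$, I would rescale $x = 2^j y$ to unit scale, apply the classical Gagliardo-Nirenberg inequality
\[
    \|\nabla^i g\|_{L^{\bar r}(I_0)} \lesssim \|\nabla^m g\|_{L^q(I_0)}^{\theta} \|g\|_{L^p(I_0)}^{1-\theta} + \|g\|_{L^p(I_0)},
\]
and scale back. The condition \eqref{eq:GN_conditions} is exactly the statement that the leading product term is scale-invariant under this operation. Restoring the weights, one obtains
\[
    \|\psi^{m(1-\theta)} \phi^{m\theta} \nabla^i f\|_{L^{\bar r}(A_j)} \lesssim \|\phi^m \nabla^m f\|_{L^q(A_j)}^{\theta} \|\psi^m f\|_{L^p(A_j)}^{1-\theta} + \Theta_j \,\|\psi^m f\|_{L^p(A_j)},
\]
where the prefactor $\Theta_j$ from the lower-order correction has the form $(\phi_j/\psi_j)^{m\theta}$ times a specific power of $2^j$ arising from the scaling mismatch. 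The extra assumption \eqref{eq:GN_extracond} is designed precisely so that $\Theta_j \lesssim 2^{-j\epsilon}$, providing geometric decay across annuli.

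To obtain the global bound, I would take the $\ell^{\bar r}$ norm in $j$ using that the $A_j$ partition space. Set $\bar r' := (\theta/q + (1-\theta)/p)^{-1}$; the constraint $i \leq m\theta$ yields $\bar r' \leq \bar r$, so $\|\cdot\|_{\ell^{\bar r}} \leq \|\cdot\|_{\ell^{\bar r'}}$. In $\ell^{\bar r'}$, H\"older with conjugate exponents $q/(\theta \bar r')$ and $p/((1-\theta)\bar r')$ (which sum to one by the definition of $\bar r'$) collapses the annular product sums into $\|\phi^m \nabla^m f\|_{L^q}^{\theta} \|\psi^m f\|_{L^p}^{1-\theta}$, while the lower-order sum is controlled by the geometric series $\sum_j 2^{-j\epsilon}$. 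The main obstacle is the careful bookkeeping of the scaling factor inside $\Theta_j$: one must track precisely the combined power of $2^j$ from rescaling the $\|g\|_{L^p}$ correction and from the weight ratios, and verify that it matches the combination controlled by \eqref{eq:GN_extracond}. Once the accounting is carried out, the hypothesis kicks in exactly, and the resulting constant is independent of $j_{\max}$ (and hence of $e^s$), as the lemma asserts; the same argument with $\sup_j$ in place of $\ell^{\bar r}$ handles the case $\bar r = \infty$.
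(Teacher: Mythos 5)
Your proposal is correct and follows essentially the same approach as the paper: dyadic annular decomposition, a rescaled Gagliardo--Nirenberg inequality on each annulus (with \eqref{eq:GN_conditions} giving scale invariance of the main term and \eqref{eq:GN_extracond} giving geometric decay of the lower-order correction), followed by summing over annuli via a H\"older-type argument in $j$. Your embedding $\ell^{\bar{r}'}\hookrightarrow\ell^{\bar{r}}$ combined with H\"older in $\ell^{\bar{r}'}$ is the same observation the paper packages as the elementary inequality $\sum_j a_j^{l_1}b_j^{l_2}\leq(\sum_j a_j)^{l_1}(\sum_j b_j)^{l_2}$ for $l_1+l_2\geq 1$, the only minor point you skip is the paper's periodic-extension step for annuli near the fundamental domain boundary.
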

\begin{proof} We divide $\mathbb R^d$ dyadically, defining $A_{-1} = B(0, 1)$ and $A_j = B(0, 2^{j+1}) \setminus B(0, 2^{j})$ for $j \geq 0$. Let $D_{k}=\cup_{j=-1}^{k}A_{j}$.  We first show
\begin{equation} \label{eq:GNresult02}
    \| \psi^{m(1-\theta)} \phi^{m\theta} \nabla^i f \|_{L^{\bar{r}}(D_{k})} \les \| \psi^m f \|_{L^p(D_{k})}^{1-\theta} \| \phi^m \nabla^m f \|_{L^q(D_{k})}^{\theta} + \| \psi^m f \|_{L^p(D_{k})},
\end{equation}
with implicit constant independent of $k$. From Gagliardo-Nirenberg inequality for bounded domains,
we have 
\begin{equation} \label{eq:Aj}
    \| \mathbbm{1}_{A_j} \nabla^i f \|_{L^{\bar{r}}} \les  \| \mathbbm{1}_{A_j} \nabla^m f \|_{L^q}^{\theta}  \| \mathbbm{1}_{A_j}f \|_{L^p}^{1-\theta} + 2^{3j(-i/3+1/\bar{r}-1/p)} \| \mathbbm{1}_{A_j}  f \|_{L^p},
\end{equation}
for $j \geq 0$.

Now, introducing the weights, we see that
\begin{align}
    \| \mathbbm{1}_{A_j} \psi^{m(1-\theta)} \phi^{m\theta} \nabla^i f \|_{L^{\bar{r}}} &\les  \psi_j^{m(1-\theta)} \phi_j^{m\theta} \| \mathbbm{1}_{A_j} \nabla^m f \|_{L^q}^{\theta}  \| \mathbbm{1}_{A_j}  f \|_{L^p}^{1-\theta} + 2^{3j (-i/3+1/\bar{r}-1/p)} \psi_j^{-m\theta} \phi_j^{m\theta} \| \psi^m \mathbbm{1}_{A_j}  f \|_{L^p} \notag \\
    &\les  
    \| \mathbbm{1}_{A_j} \phi^m \nabla^m f \|_{L^q}^{\theta}  \| \psi^m \mathbbm{1}_{A_j}  f \|_{L^p}^{1-\theta} + 2^{j (-i+3/\bar{r}-3/p)} \left( \frac{\phi_j}{\psi_j} \right)^{m\theta} \| \psi^m \mathbbm{1}_{A_j} f \|_{L^p}  \label{eq:marat}
\end{align}
where we recall that our weights are within $[\phi_j/C, \phi_j C]$ and $[\psi_j/C, \psi_j C]$, respectively, for $x \in A_j$. Recall also that $\les$ is allowed to depend on $C, i, m, p, q, \bar{r}, \eps$ (in particular, it can absorb constants $C^m$).

Now, note from \eqref{eq:GN_conditions}
\begin{equation} \label{eq:robespierre}
    -i + \frac{3}{\bar{r}} - \frac{3}{p} = - m \theta + \theta \left(\frac{3}{q} - \frac{3}{p} \right).
\end{equation}
Using that together with the fact that $\phi, \psi$ are within a constant factor of $\phi_j, \psi_j$ for $x \in A_j$, from \eqref{eq:GN_extracond}, we obtain that 
\begin{equation*}
\sum_{j=0}^{\infty }2^{j (-i+3/\bar{r}-3/p)} \left( \frac{\phi_j}{\psi_j} \right)^{m\theta} = \sum_{j=0}^{\infty }\left( 2^j \right)^{\theta (3/q - 3/p)} \left( \frac{ \phi_j}{\langle x \rangle\psi_j} \right)^{m\theta} \les 
\sum_{j=0}^{\infty }\langle 2^{j} \rangle^{\theta (3/q - 3/p)} \left( \frac{ \phi (2^{j})}{2^{j} \psi (2^{j})} \right)^{m\theta} \les 1.
\end{equation*}
Then we have 
\begin{align}\label{interplationholder}
&\quad \| \psi^{m(1-\theta)} \phi^{m\theta}\nabla^{i}(f(x))\|_{L^{\bar{r}}(D_{k})}^{\bar{r}}\\\nonumber
&=\sum_{j=-1}^{k}(\| \psi^{m(1-\theta)} \phi^{m\theta}\nabla^{i}(f(x))\|_{L^{\bar{r}}(A_{j})})^{\bar{r}}\\\nonumber
&\les \sum_{j=-1}^{k} \|  \phi^{m}\nabla^{m}(f(x))\|_{L^{q}(A_{j})}^{\bar{r}\theta}\|\psi^{m} f( x)\|_{L^{p}(A_{j})}^{\bar{r}(1-\theta)}+\sum_{j=0}^{\infty} 2^{j (-i+3/\bar{r}-3/p)}  (\frac{\phi_j}{\psi_j})^{m\theta}\|\psi^{m}f(x)\|_{L^{p}(A_{j})}^{\bar{r}}\\\nonumber
&\les  \sum_{j=-1}^{k} \|\phi^{m}\nabla^{m}(f(x))\|_{L^{q}(A_{j})}^{\bar{r}\theta}\|\psi^{m}f( x)\|_{L^{p}(A_{j})}^{\bar{r}(1-\theta)}+\sup_{j} \|\psi^{m}f(x)\|_{L^{p}(A_{j})}^{\bar{r}}\\\nonumber
&\les  \sum_{j=-1}^{k} (\|\phi^{m}\nabla^{m}(f(x))\|_{L^{q}(A_{j})}^{q \frac{\bar{r}\theta}{q}}\|\psi^{m}f( x)\|_{L^{p}(A_{j})}^{p \frac{\bar{r}(1-\theta)}{p}})+\sup_{j}\|\psi^{m}f(x)\|_{L^{p}(A_j)}^{\bar{r}}.
\end{align}

We claim that for $l_1,l_2>0, l_1+l_2\geq 1$, $a_j,b_j\geq 0$,  
\begin{align}\label{holderine}
\sum_{j}a_j^{l_1}b_j^{l_2}\leq (\sum_{j}a_j)^{l_1}(\sum_{j}b_j)^{l_2}.
\end{align}

Since $\frac{\bar{r}\theta}{q}+\frac{\bar{r}(1-\theta)}{p}=1-\frac{i\bar{r}}{3}+\theta\frac{m\bar{r}}{3}\geq 1$, combining \eqref{holderine} and \eqref{interplationholder}, we then have \eqref{eq:GNresult02}:
\begin{align*}
&\quad \|\psi^{m(1-\theta)} \phi^{m\theta}\nabla^{i}(f(x))\|_{L^{\bar{r}}(D_{k})}^{\bar{r}}\\
&\les  (\sum_{j=0}^{k} \|\phi^{m}\nabla^{m}(f(x))\|_{L^{q}(A_{j})}^{q})^{\frac{\bar{r}\theta}{q}}(\sum_{j=1}^{k}\|\psi^{m}f( x)\|_{L^{p}(A_{j})}^{p})^{\frac{\bar{r}(1-\theta)}{p}}+\|\psi^{m}f(x)\|_{L^{p}(D_{k})}^{\bar{r}},\\
&\les\|\phi^{m}\nabla^{m}(f(x))\|_{L^{q}(D_{k})}^{\bar{r}\theta}\|\psi^{m}f(x)\|_{L^{p}(D_{k})}^{\bar{r}(1-\theta)}+\|\psi^{m}f(x)\|_{L^{p}(D_{k})}^{\bar{r}}.
\end{align*} 
For $e^{s}\mathbb{T}_{L}^3$, it is clear that there exists $k$, such that $D_{k-1}\subset \mathbb{T}_{L}^3 \subset D_{k}.$ 
Since $f$ is an periodic function on $e^s\mathbb{T}_{L}^3$, there is an natural periodic extension $E(f)$ to $D_k$ and satisfying
\[
\|\nabla^{i}f\|_{L^{l}(e^s\mathbb{T}^d)} \leq \|\nabla^{i}E(f)\|_{L^{l}(D_k)}\leq d^d\|\nabla^{i}f\|_{L^{l}(e^s\mathbb{T}^d)}.
\]
for all $i,l$.
Then \eqref{eq:GNresult} follows from \eqref{eq:GNresult02}.
\end{proof}
\begin{lemma} \label{lemma:GN_general02} Let $f:\mathbb R^d \to \mathbb R$. Let $I_{-1} = [0, 1]$ and $I_j = [2^{j}, 2^{j+1}]$. Let $\phi (x), \psi(x) \geq 0$ be weights such that there exist values $\phi_j, \psi_j$ and a constant $C$ satisfying that $\phi (x) \in [\phi_j/C, C \phi_j]$ and $\psi(x) \in [\psi_j/C, C\psi_j]$ for every $x$ with $|x| \in I_j$. Moreover, let us assume the weights are approximately $1$ at the origin, that is $\phi_{-1} = \psi_{-1} = \phi_0 = \psi_0 = 1$.  Let $1 \leq i \leq m$. Assume that the parameters $p, q, \bar{r}, \theta$ satisfy:
\begin{equation} 
    \frac{1}{\bar{r}} = \frac{i}{d} + \theta \left( \frac{1}{q} - \frac{m}{d} \right) + \frac{1-\theta}{p}, \qquad \mbox{ and } \qquad \frac{i}{m} \leq \theta < 1,
\end{equation}
Letting $\bar{r} = \infty$, we have
\begin{equation} \label{eq:GNresultinfty}
    \left| \nabla^i f \right| \les \| \psi^m f \|_{L^p}^{1-\theta} \| \phi^m \nabla^m f \|_{L^q}^{\theta} \psi^{-m(1-\theta)} \phi^{-m\theta}  + \left\| \psi^m f \right\|_{L^p} \cdot  \langle x \rangle^{3\theta (1/q-1/p)-m\theta} \psi^{-m}.
\end{equation}
If moreover $p = \infty, q = 2$ and $\psi = 1$, we obtain
\begin{equation} \label{eq:GNresultinfty_simplified}
    \left| \nabla^i f \right| \les \| f \|_{L^\infty}^{1-\frac{i}{m-3/2}} \| \phi^m \nabla^m f \|_{L^q}^{\frac{i}{m-3/2}}  \phi^{-m\frac{i}{m-3/2}}  + \left\| f \right\|_{L^\infty} \cdot  \langle x \rangle^{-i}.
\end{equation}
For $\bar{r} \in [1,\infty)$, any $\eps > 0$, and under the extra assumption:
\begin{equation}
    \left( \frac{\phi(x)}{\langle x \rangle \psi(x) }\right)^{m\theta} \langle x \rangle^{3\theta (1/q-1/p)} \lesssim \langle x\rangle ^{-\epsilon}
\end{equation}

we have the weighted Gagliardo-Nirenberg inequalities:
\begin{equation} \label{eq:GNresult2}
    \| \psi^{m(1-\theta)} \phi^{m\theta} \nabla^i f \|_{L^{\bar{r}}} \les \| \psi^m f \|_{L^p}^{1-\theta} \| \phi^m \nabla^m f \|_{L^q}^{\theta} + \| \psi^m f \|_{L^p}
\end{equation}
The implicit constants in \eqref{eq:GNresultinfty} and \eqref{eq:GNresult2} may depend on the parameters $p, q, i, m, \theta, C$, (as well as $r$, $\eps >0$, $l$ for \eqref{eq:GNresult2}) but they are independent of $f$ and $\psi$, $\phi$. 
\end{lemma}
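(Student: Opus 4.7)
The plan is to follow the same strategy as Lemma \ref{lemma:GN_general01}, adapting it to $\mathbb{R}^d$ rather than the periodic setting. The proof for the torus proceeds by dyadic decomposition and localized Gagliardo--Nirenberg; the version on $\mathbb{R}^d$ is actually simpler because we do not need to worry about extending a periodic function nor about the upper bound in $k$.

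First, I would decompose $\mathbb{R}^d = A_{-1} \cup \bigcup_{j \geq 0} A_j$ where $A_{-1} = B(0,1)$ and $A_j = B(0, 2^{j+1}) \setminus B(0, 2^j)$. On each annulus the weights $\phi, \psi$ are within a fixed constant of $\phi_j, \psi_j$. Applying the standard (unweighted, scale-invariant) Gagliardo--Nirenberg inequality on each annulus and rescaling to account for the $2^j$ size, one gets the analog of \eqref{eq:Aj}, namely
\begin{equation*}
\| \mathbbm{1}_{A_j} \nabla^i f \|_{L^{\bar r}} \lesssim \| \mathbbm{1}_{A_j} \nabla^m f \|_{L^q}^{\theta} \| \mathbbm{1}_{A_j} f \|_{L^p}^{1-\theta} + 2^{j(-i + d/\bar r - d/p)} \| \mathbbm{1}_{A_j} f \|_{L^p}.
\end{equation*}
Introducing the weights and using that they are comparable to $\phi_j,\psi_j$ on $A_j$ yields the weighted annular bound, exactly as in \eqref{eq:marat}.

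Next, for the $\bar r = \infty$ case \eqref{eq:GNresultinfty}, I would take a supremum of the annular bound rather than summing: the pointwise estimate at $x \in A_j$ follows directly after absorbing $\phi_j, \psi_j$ into $\phi(x), \psi(x)$ and using the scaling identity \eqref{eq:robespierre} with $1/\bar r = 0$, which converts the factor $2^{j(-i - d/p)} (\phi_j/\psi_j)^{m\theta}$ into the required $\langle x \rangle^{d\theta(1/q - 1/p) - m\theta} \psi^{-m}$ form. For the specialization \eqref{eq:GNresultinfty_simplified}, one just substitutes $p = \infty$, $q = 2$, $\psi = 1$ and solves the scaling constraint for $\theta$, giving $\theta = i/(m - d/2)$ (there appears to be a minor notational residue of ``$3$'' from a previous $d=3$ version of the paper).

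For the $\bar r < \infty$ case \eqref{eq:GNresult2}, I would sum the $\bar r$-th powers of the annular bounds:
\begin{equation*}
\| \psi^{m(1-\theta)} \phi^{m\theta} \nabla^i f \|_{L^{\bar r}}^{\bar r} = \sum_j \| \mathbbm{1}_{A_j} \psi^{m(1-\theta)} \phi^{m\theta} \nabla^i f \|_{L^{\bar r}}^{\bar r}.
\end{equation*}
The error sum $\sum_j 2^{j\bar r (-i + d/\bar r - d/p)} (\phi_j/\psi_j)^{m\theta \bar r} \|\mathbbm{1}_{A_j} \psi^m f\|_{L^p}^{\bar r}$ is controlled by hypothesis \eqref{eq:GN_extracond}, which together with \eqref{eq:robespierre} ensures the geometric factor decays like $\langle x \rangle^{-\varepsilon \bar r}$ and is therefore summable, so it can be bounded by $\sup_j \|\mathbbm{1}_{A_j}\psi^m f\|_{L^p}^{\bar r} \lesssim \|\psi^m f\|_{L^p}^{\bar r}$. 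For the main term, one uses the Hölder-type inequality $\sum_j a_j^{l_1} b_j^{l_2} \leq (\sum_j a_j)^{l_1} (\sum_j b_j)^{l_2}$ for $l_1, l_2 > 0$ with $l_1 + l_2 \geq 1$, applied with $l_1 = \bar r\theta/q$ and $l_2 = \bar r(1-\theta)/p$; the condition $l_1 + l_2 \geq 1$ is exactly $1 - i\bar r/d + \theta m \bar r / d \geq 1$ which follows from $\theta \geq i/m$. The only place one has to be careful is verifying the hypothesis \eqref{eq:GN_extracond} suffices for the summability; everything else is a bookkeeping exercise essentially identical to the torus case, so I do not anticipate any genuine obstacle.
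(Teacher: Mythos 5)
Your proposal is correct and follows essentially the same route as the paper, which simply says to repeat the dyadic-annulus argument of Lemma \ref{lemma:GN_general01} with $k=\infty$; you flesh that out faithfully, taking a supremum over annuli in the $\bar r=\infty$ case and summing $\bar r$-th powers with the H\"older-type inequality $\sum_j a_j^{l_1}b_j^{l_2}\le(\sum_j a_j)^{l_1}(\sum_j b_j)^{l_2}$ in the $\bar r<\infty$ case. Your observation that the exponents should carry $d$ rather than $3$ (so that \eqref{eq:robespierre} reads $-i+d/\bar r-d/p=-m\theta+\theta(d/q-d/p)$ and the specialized exponent is $\theta=i/(m-d/2)$) is a genuine catch: the paper's statement and proof inherit a hard-coded $d=3$ from an earlier version, while the scaling relation \eqref{eq:GN_conditions} itself is written with general $d$, so the two are inconsistent as printed.
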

\begin{proof}
The proof is similar as in lemma \ref{lemma:GN_general01}, by taking $k=\infty$ in \eqref{eq:GNresult02}.
\end{proof}

\begin{lemma}[Angular repulsivity] \label{lemma:angular_repulsivity} 
We have that the profiles $(\bar U, \bar S)$ from \cite{Merle-Raphael-Rodnianski-Szeftel:implosion-i}, in the case of $(d, p) = (8, 3)$ (which corresponds to $(d, \gamma) = (8, 2)$) satisfy:
\begin{equation} \label{eq:repuls}
1 + \frac{\bar U_R}{R} - \alpha | \p_R \bar S | > \eta,
\end{equation}
for some $\eta > 0$ sufficiently small that is allowed to depend on $r$ or $\gamma$.
\end{lemma}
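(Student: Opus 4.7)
The plan is to establish the pointwise inequality \eqref{eq:repuls} by decomposing $R \in [0,\infty)$ into three regions: a neighborhood of the origin, a neighborhood of the sonic point $R=1$, and the remaining compact/far-field region, and verifying positivity with a uniform margin $\eta > 0$ in each. This mirrors the strategy in \cite{CaoLabora-GomezSerrano-Shi-Staffilani:nonradial-implosion-compressible-euler-ns-T3R3} for the 3D case, but adapted to the specific profile $(d,\gamma)=(8,2)$ constructed in \cite{Merle-Raphael-Rodnianski-Szeftel:implosion-i}.

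Near the origin, smoothness of the profile together with the regularity condition \eqref{eq:regularityatorigin} gives $\p_R \bar S = O(R)$ and $\bar U_R = R\, \p_R \bar U_R(0) + O(R^2)$, so that $\bar U_R/R \to \p_R \bar U_R(0)$ as $R \to 0^+$. Thus the left-hand side of \eqref{eq:repuls} converges to $1 + \p_R \bar U_R(0)$, which by the already-established radial repulsivity \eqref{eq:radial_repulsivity} at $R=0$ exceeds $\tilde\eta > 0$; continuity then extends the bound to a small interval $[0, \delta]$. In the far field, the decay estimate \eqref{eq:profiles_decay} yields $|\bar U_R/R| \lesssim \langle R \rangle^{-r}$ and $|\p_R \bar S| \lesssim \langle R \rangle^{-r}$, so the left-hand side tends to $1$, and one obtains \eqref{eq:repuls} for $R \geq R_\infty$ with $R_\infty$ sufficiently large depending on the profile.

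Near the sonic point $R=1$, I would use the critical point condition \eqref{critical point condition} which implies $\bar U_R(1)/1 = \alpha \bar S(1) - 1/2$, so that at $R=1$ the left-hand side equals $\tfrac{1}{2} + \alpha \bar S(1) - \alpha |\p_R \bar S(1)|$. Combined with the radial repulsivity inequality at $R=1$, which reads $1 + \p_R \bar U_R(1) - \alpha|\p_R \bar S(1)| > \tilde\eta$, and the relation between $\p_R \bar U_R(1)$, $\p_R \bar S(1)$, and $\bar S(1)$ that is forced by the smooth passage through the sonic point (obtained from the linearization at $R=1$ and the Taylor coefficients that are fixed by the MRRS construction), one verifies \eqref{eq:repuls} at $R=1$ with a positive margin. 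The analysis then extends to a small sonic neighborhood $[1-\delta',1+\delta']$ by a quantitative Taylor expansion in $R-1$ using the known smooth Taylor coefficients.

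The remaining compact region $[\delta, 1-\delta'] \cup [1+\delta', R_\infty]$ avoids both singular points and the far field. On this set the profile $(\bar U_R, \bar S)$ is smooth and can be computed to high precision by integrating the profile ODE either outward from $R=0$ or inward/outward from the sonic point. I would verify \eqref{eq:repuls} here by a rigorous numerical computation with interval arithmetic (as in \cite{Buckmaster-CaoLabora-GomezSerrano:implosion-compressible}), using a finite grid in $R$ together with a Lipschitz estimate controlled by the maximum of $|\p_R(\bar U_R/R)|$ and $|\p_R^2 \bar S|$ on the interval. The main obstacle will be the sonic neighborhood: the profile ODE is singular at $R=1$, so the passage from the explicit Taylor expansion near $R=1$ to the numerically integrated portion must be matched with quantitative enclosure bounds on finitely many Taylor coefficients to obtain the uniform margin $\eta$. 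For the specific values $(d,p)=(8,3)$, such quantitative verification has already been demonstrated to be tractable in the existing self-similar literature for compressible Euler, and I would follow that template.
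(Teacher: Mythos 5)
Your proposal takes a genuinely different route from the paper. The paper's proof is entirely analytic: it rewrites the repulsivity inequality in phase-portrait coordinates $(W,Z)$, factors the inequality into expressions like $N_W D_Z \pm N_Z D_W$ or $D_W^2 D_Z + \frac{\alpha}{2}(N_W D_Z - N_Z D_W)$, and uses barrier curves (conic sections such as $\Xi_1 = 0$, $\Xi_2 = 0$, $N_W = 0$) together with sign checks at the special points $P_s$, $\bar P_s$, $P_\star$, $P_0$, $P_\infty$ to show the trajectory never crosses into the bad region. No numerical verification is required; all the inequalities reduce to signs of explicit polynomials or radicals in $r$, evaluated near $r^\ast$. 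Your proposal instead decomposes $[0,\infty)$ into a neighborhood of $R=0$, a neighborhood of $R=1$, a far field, and a remaining compact piece, and invokes rigorous interval arithmetic on the compact piece. Both strategies are known to be viable in this subject, and your near-origin and far-field arguments are correct (at $R=0$ the angular and radial repulsivity conditions coincide because $\bar U_R(0)=0$ and $\p_R\bar S(0)=0$; at infinity the decay \eqref{eq:profiles_decay} makes the left-hand side tend to $1$).

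However, as written your proposal has genuine gaps and does not amount to a proof. First, the sonic-point analysis is left unfinished: you write that the critical point condition "implies $\bar U_R(1) = \alpha\bar S(1) - 1/2$," but in the MRRS normalization used throughout the lemma (see the remark after the lemma statement) the condition is $R + \bar U_R - \alpha\bar S = 0$ at $R=1$, giving $\bar U_R(1) = \alpha\bar S(1)-1$; the factor $1/2$ is a normalization mix-up. More importantly, you observe that at $R=1$ one gets $\alpha\bar S(1) - \alpha|\p_R\bar S(1)|$ (up to constants) and claim that this, "combined with" radial repulsivity and "the relation between $\p_R\bar U_R(1)$, $\p_R\bar S(1)$, and $\bar S(1)$," verifies the inequality, but you never state which relation that is or carry out the check. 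Radial repulsivity at $R=1$ controls $1 + \p_R\bar U_R(1) - \alpha|\p_R\bar S(1)|$, not the quantity $1 + \bar U_R(1) - \alpha|\p_R\bar S(1)|$, and there is no a priori reason for $\bar U_R(1) \geq \p_R\bar U_R(1)$; the paper instead works directly with $W_1 + Z_1$ and $N_W(P_s)$ and proves the needed signs in Lemma \ref{lemma:auxiliary}. Second, the interval-arithmetic verification on the compact set is only announced, not carried out, and you yourself identify that matching a singular Taylor expansion at $R=1$ to a rigorously integrated ODE solution with enclosures is the hard part, without resolving it. So while your architecture could be made to work (and would be closer in spirit to \cite{Buckmaster-CaoLabora-GomezSerrano:implosion-compressible} than to the paper's argument), the two load-bearing pieces -- the sonic-point margin and the rigorous numerics -- are missing, and the paper's barrier argument avoids both.
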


\begin{remark} In order to facilitate the translation of formulas between this paper and the previous literature \cite{Merle-Raphael-Rodnianski-Szeftel:implosion-i, Buckmaster-CaoLabora-GomezSerrano:implosion-compressible, CaoLabora-GomezSerrano-Shi-Staffilani:nonradial-implosion-compressible-euler-ns-T3R3}, we will prove this Lemma with the scaling convention of those papers. That is, here the profiles $\bar U_R, \bar S$ satisfy the self-similar equations:
\begin{align} \label{eq:newSS}
0   &= (r-1) \bar U_R + R\p_R \bar U_R + \bar U_R \p_R \bar U_R + \alpha \bar S^2 \\ 
0  & = (r-1)  \bar S + R \p_R  \bar S
+ \bar U_R  \p_R \bar S 
+ \alpha  \bar S \text{div} (\bar U_R \cdot \hat{R})
\end{align}
which is a rescaled version of \eqref{eq:SSprofiles} in $(U, S)$ variables. In order to obtain profiles solving \eqref{eq:SSprofiles}, one can just divide the profiles solving \eqref{eq:newSS} by $2$.
\end{remark}

\begin{proof}
The proof follows very closely the one from \cite{CaoLabora-GomezSerrano-Shi-Staffilani:nonradial-implosion-compressible-euler-ns-T3R3}. The only difference consists on the fact that the expressions are different, since $d=3$ in that paper and here we are taking $(d, \gamma) = (8, 2).$ Moreover, the expressions can be simplified further since $\gamma$ here is fixed, rather than a free parameter. Before showing the Lemma, we specify the formulas from \cite{Merle-Raphael-Rodnianski-Szeftel:implosion-i} in this case. 

We will adopt the notation of \cite{Buckmaster-CaoLabora-GomezSerrano:implosion-compressible} (with the exception that $d \neq 3$ here) and denote $U = \frac{\bar U}{R}$, $S = \frac{\bar S}{R}$. We also use $W, Z$ coordinates, which are given by $W = U + S$ and $Z = U-S$. We take $\xi = \log(R)$. Now, the ODE that the self-similar profiles satisfy becomes autonomous:
\begin{equation*}
\p_\xi W (\xi) = \frac{N_W (W, Z)}{D_W (W, Z)}, \qquad \mbox{ and } \quad \p_\xi Z = \frac{N_Z (W, Z)}{D_Z (W, Z)}.
\end{equation*}
The polynomials are given by
\begin{equation}
D_W(W, Z) = 1 + \frac{3}{4}W + \frac{1}{4}Z, \qquad \mbox{ and }\qquad D_Z(W,Z) = 1 + \frac{1}{4} W + \frac{3}{4} Z,
\end{equation}
and
\begin{align*}
N_W(W, Z) &= -r W-\frac{13 W^2}{8}-\frac{W Z}{4}+\frac{7 Z^2}{8}, \\
N_Z(W, Z) &= -r Z+\frac{7 W^2}{8}-\frac{W Z}{4}-\frac{13 Z^2}{8}.
\end{align*}

The profiles pass through the point $P_s$, which is one of the two solutions to $N_Z(W, Z) = D_Z(W, Z) = 0$ (the rightmost one). The point $\bar P_s$ is the other solution. By rescaling, one can assume the solution passes through $P_s$ at $R=1$. We have:
\begin{align*}
P_s = (W_0, Z_0) &= \left( \frac{1}{14} (-3 r+3
   \mathcal{R}_1+10),\frac{1}{14}
   (r-\mathcal{R}_1-22)\right) \,, \\ 
\bar P_s &= \left( \frac{1}{14} (-3 r-3
   \mathcal{R}_1+10),\frac{1}{14}
   (r+\mathcal{R}_1-22)\right) \,,
\end{align*}
where
\begin{equation} \label{eq:R1}
\mathcal{R}_1 = \sqrt{(r-44) r+92} \,.
\end{equation}

Now the ODE gives us the formula for $W_1$, the first-order Taylor coefficient of the solution at $P_s$. The equation $\p_\xi Z = \frac{N_Z}{D_Z}$ is singular of the type $0/0$ at $P_s$, but applying L'H\^opital, we see that $Z_1 \nabla D_Z (P_s) \cdot (W_1, Z_1) = \nabla N_Z(P_s) \cdot (W_1, Z_1)$. The resulting second-degree equation yields two possible solutions for $Z_1$. The smooth profiles require one specific choice for $Z_1$ (see the aforementioned references for details). One obtains the formulas:
\begin{align*}
W_1 &= \frac{20 (r-1)}{-r+\mathcal{R}_1+8}-\frac{2}{7} (2 r+5) \,,\\
Z_1 &=  \frac{1}{147} \left(\frac{980 r+\sqrt{2}
   \mathcal{R}_2-980}{r-\mathcal{R}_1-8}+7 (94-17 r)\right)\,,
\end{align*}
where
\begin{align*}
\mathcal{R}_2 &=7 \sqrt{7} \sqrt{r (r (r (79 r-79 \mathcal{R}_1-2906)+2
   (584 \mathcal{R}_1+6733))-24 (107
   \mathcal{R}_1+1062))+2704 \mathcal{R}_1+23424}\,.
\end{align*}

Another point of interest of the phase portrait is the solution to $N_Z = 0$ and $N_W = 0$. There are four solutions to $N_Z = N_W = 0$. Two of them ($(0, 0)$ and $(-r, -r)$), and of the other two only one of them lies in the region $W>Z$. The only solution to $N_Z = N_W = 0$ with $W > Z$ is given by $P_\star$:
\begin{equation*}
P_\star = \left(  \frac{1}{5} \left(2 \sqrt{2}-1\right)
   r,  -\frac{1}{5} \left(1+2 \sqrt{2}\right)
   r\right) \,.
\end{equation*}

Moreover, the proof of the existence of the profiles from \cite{Merle-Raphael-Rodnianski-Szeftel:implosion-i} ensures the properties:
\begin{equation} \label{eq:DWDZ}
W>Z, D_W > 0, D_Z < 0, \quad \forall \xi < 0, \qquad \mbox{ and } \qquad W>Z, D_W > 0, D_Z > 0 \quad \forall \xi > 0.
\end{equation} 
These regions can be referred to as 'left of the phase portrait' (region where $D_W > 0, D_Z < 0$, corresponding to $ R<1$) and 'right of the phase portrait' ($D_W, D_Z > 0$, corresponding to $R>1$).
We can see a plot of the phase portrait in Figure \ref{fig:setup}.

We recall from \cite[Equation (1.6)]{Merle-Raphael-Rodnianski-Szeftel:implosion-nls} that the parameter $r$ lies in $1<r<r^\ast$, where in our $(d, \gamma ) = (8, 2)$ case yields
\begin{equation} \label{eq:defr}
r^\ast(\gamma) = \frac{10}{2+2 \sqrt{2}} = 2.07107\ldots 
\end{equation}

\begin{figure}[h]
\centering
\includegraphics[width=0.5\textwidth]{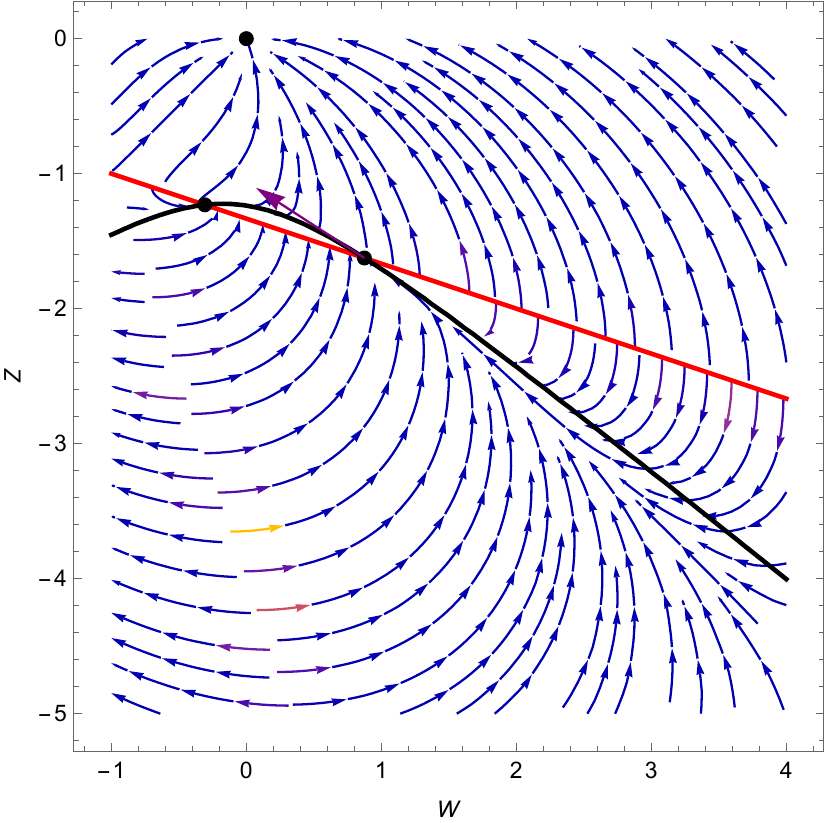}
\caption{Phase portrait in $(W, Z)$ coordinates for $r = 2.01$ (with $d = 8, p = 3$). The red line represents $D_Z = 0$ and the black curve represents $N_Z = 0$. The purple arrow corresponds to the vector in the direction of $(W_1, Z_1)$. The point $P_s$ is the rightmost intersection of $N_Z = 0$ (black curve) with $D_Z = 0$ (red line).}
\label{fig:setup}
\end{figure}

\textbf{Part I. $R<1$: Right of the phase portrait.}

We start showing Lemma \ref{lemma:angular_repulsivity} for $\xi < 0$. The proof is analogous to \cite[Lemma 4.1]{CaoLabora-GomezSerrano-Shi-Staffilani:nonradial-implosion-compressible-euler-ns-T3R3}, but with different formulas. We refer to that paper for more details. First, changing to $W, Z$ variables and multiplying by $D_W D_Z$, we are left to show
\begin{equation} \label{eq:part1}
N_W D_Z + N_Z D_W > 0.
\end{equation}

The curve solving $N_W D_Z + N_Z D_W = 0$ inside the region $D_W > 0, D_Z < 0$ can be solved in $(U, S)$ variables more easily as
\begin{equation*}
b_S = 2 \sqrt{ b_U+1} \sqrt{r+ b_U} \sqrt{ \frac{ b_U }{ 8  b_U+2(r-1)} }, \quad \mbox{ and } \quad b_U \in \left(  U(P_s),  \frac{-(r-1)}{4} \right).
\end{equation*}
We have that $\lim_{r \to r^\star} U(P_s) = 1-\sqrt{2}$ and $U(P_0) = \frac{-(r-1)}{4}$ (with limit $\frac{6-5\sqrt{2}}{4} = -0.267\ldots$ as $r\to r^\ast$), therefore $-1 < U(P_s) <  U(P_0) < 0$ for $r$ sufficiently close to $r^\ast$. We change again $(b_U, b_S)$ to $W, Z$ coordinates and denote such curve by $b$. See Figure \ref{fig:partI} for a picture of $b(t)$.

The approach is to show \eqref{eq:part1} close to $P_s$ and $P_0$. Then, if \eqref{eq:part1} was going to fail, it would need to cross $b$ in both directions (one to go into $N_W D_Z + N_Z D_W < 0$ and another to come back). We will also show that the vector field $(N_W D_Z, N_Z D_W)$ along $b$ points always to the same side of $b$. Then, one concludes that \eqref{eq:part1} always holds.

\begin{figure}[h]
\centering
\includegraphics[width=0.5\textwidth]{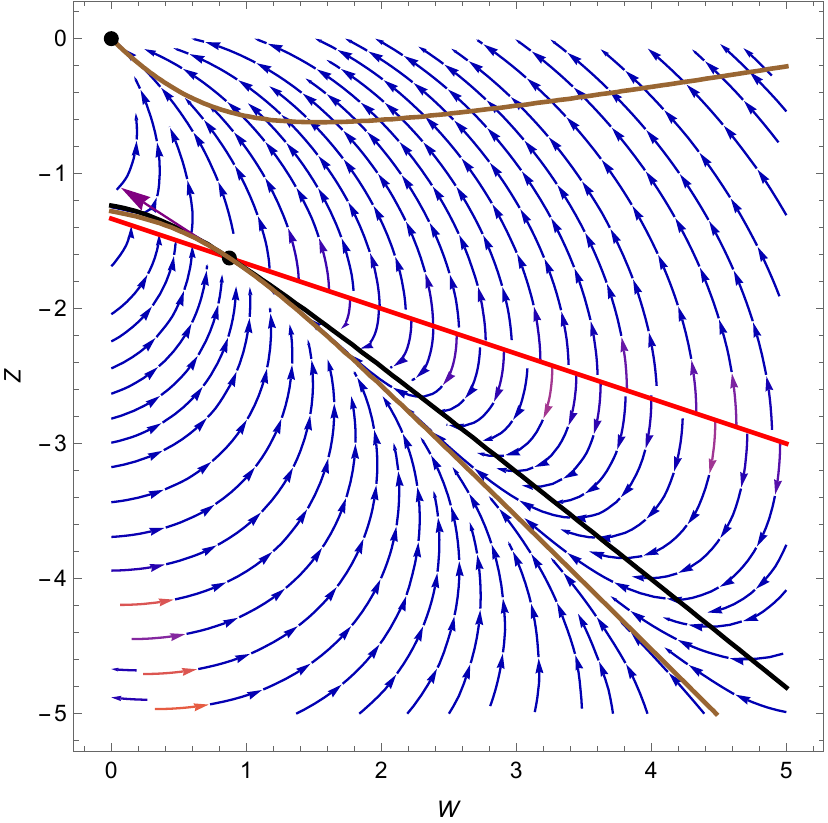}
\caption{Phase portrait in $(W, Z)$ coordinates for $r = 2.01$ (with $d = 8, p = 3$). The brown curve represents $N_W D_Z + N_Z D_W = 0$ and the lower branch, to the right of $P_s$, would correspond to the curve $b(t)$ for $t \geq 0$. See Figure \ref{fig:setup} for the meaning of the rest of the colors in the plot.}
\label{fig:partI}
\end{figure}

\, \\

\textbf{Part I.1. Constant sign along $b$} \\
 
 Since $N_W D_Z = -N_Z D_W$ along $b$ and $\nabla ( N_W D_Z + N_Z D_W )$ is a normal vector to $b$, it suffices to show that
\begin{equation*}
(-1, 1) \cdot \nabla ( N_W D_Z + N_Z D_W ) = -(W - Z) (-1 + r + 2 W + 2 Z),
\end{equation*}
has a constant sign. We see that $W-Z>0$ from \eqref{eq:DWDZ}. Regarding the second factor note $\frac{W+Z}{2} < U(P_0) = \frac{-(r-1)}{4}$, so we have that $-1+r+2(W+Z) < r-1-(r-1) = 0$, so it also has a constant sign.

\, \\

\textbf{Part I.2 Taylor analysis at $P_s$ and $P_0$.} \\

Regarding checking \eqref{eq:part1} at $P_s$, we need to show $W_1 + Z_1 < 0$. The proof of $W_1 + Z_1 < 0$ is deferred to Lemma \ref{lemma:auxiliary}. Regarding checking \eqref{eq:part1} at $P_0$, we have that the solution near $R=0$ has the form:
\begin{equation*}
W = \frac{w_0}{R} + \sum_{i=1}^\infty w_i R^{i-1}, \qquad \mbox { and } \qquad Z = \frac{-w_0}{R} + \sum_{i=1}^\infty w_i  (-R)^{i-1},  
\end{equation*}
where $w_1 = \frac{-(r-1)}{4}$ and $w_2, w_3, \ldots$ can be expressed in terms of $w_0$ (changing $w_0$ corresponds to a dilation of the profile, so it leaves $w_1$ unchanged but modifies the rest of the coefficients). Plugging this into expression \eqref{eq:part1} and using $w_1 = \frac{-(r-1)}{4}$:
\begin{equation}
\frac{N_W}{D_W} + \frac{N_Z}{D_Z} = \left(\frac{(r-5) (r-1) (3 r+1) }{8 w_0^2} - 16 w_3 \right) R^2 + O(R^4).
\end{equation}

The equation $\p_\xi W = \frac{N_W}{D_W}$ allows us to solve for $w_3$ in terms of the previous coefficients, getting
\begin{equation}
w_3 = \frac{(r-5) (r-1) (3 r+1)}{160 w_0^2},
\end{equation}
and therefore
\begin{equation}
\frac{N_W}{D_W} + \frac{N_Z}{D_Z} = \frac{(r-5) (r-1) (3 r+1)}{40 w_0^2} R^2 + O(R^4).
\end{equation}
which is smaller than $0$ since $1<r<r^\ast = \frac{10}{2+2 \sqrt{2}} < 5$. Thus, $N_W/D_W + N_Z/D_Z$ is negative for $R>0$ sufficiently small.

\, \\
\textbf{Part II: $R>1$. Left of the phase portrait}\\

Before delving into the proof, we will show a few properties regarding the behaviour of the solution for $R>1$ that will be used later in the proof of \eqref{eq:repuls}.

\textbf{Part II.1 }
First of all, let us argue that the solution stays inside the triangle determined by $D_Z > 0$, $W>Z$ and $W < W_0$. The two first inequalities come from \eqref{eq:DWDZ}. With respect to the last one, we perform a barrier argument. In order to show that $W < W_0$, it suffices to show that $(N_W/D_W, N_Z/D_Z) \cdot (-1, 0) > 0$. Since $D_W > 0$ (due to $W>Z$ and $D_Z > 0$), this just corresponds to showing that $N_W < 0$ in the vertical segment from $P_s$ to $(W_0, W_0)$. We have that
\begin{align*}
N_W(W_0, Z_0 + t) &= \underbrace{ -\frac{2}{49} \left(-2 r^2+2r \sqrt{r^2-44 r+92} +5 \sqrt{r^2-44
   r+92}-59 r+110\right) }_{a_0} \\
   &\quad + \underbrace{ \frac{1}{28} \left(-5 \sqrt{r^2-44
   r+92}+5 r-82\right) t+\frac{7 t^2}{8} }_{a_1 t + a_2 t^2}
\end{align*}
With respect to the second term we have that for $r = r^\ast$:
\begin{equation*}
a_1 t + a_2 t^2 = \frac{1}{8} t \left(7 t+20 \sqrt{2}-52\right) < 0, \quad \mbox{ for } t\in (0, W_0-Z_0] = (0, 8-4\sqrt{2}].
\end{equation*}

Regarding $a_0$, we have that $\lim_{r \to r^\ast} a_0 = 0$, so we need a more delicate analysis to show that it is negative for $r$ close to $r^\ast$ from below. It suffices to show that
\begin{equation*}
(2 r + 5) \sqrt{r^2-44 r+92} > 2 r^2 + 59 r - 110
\end{equation*}
sufficiently close to $r^\ast$. Squaring and rearranging and taking out a factor $392$, we are left to show that $-r^3 - 9 r^2 + 35 r - 25 > 0$ for $r$ sufficiently close to $r^\ast$ from below. The value of such polynomial at $r^\ast$ is $0$, but the value of its derivative is $20(-5 + 3\sqrt{2}) < 0$, so this shows that it is positive in some interval $(r^\ast - \eps, r^\ast )$ for some $\eps > 0$ sufficiently small.

\, \\
\textbf{Part II.2. Showing that the solution lies in $N_W < 0$.} \\

Let us show moreover that the solution has to lie in the region where $N_W < 0$. This follows again the strategy from \cite{CaoLabora-GomezSerrano-Shi-Staffilani:nonradial-implosion-compressible-euler-ns-T3R3}. $N_W = 0$ is a hyperbola and one of its two branches can be parametrized by $Z$ as follows 
\begin{equation*}
p_W(Z) = \frac{1}{13} \left(\sqrt{16 r^2+8 r Z+92 Z^2}-4 r-Z\right).
\end{equation*}
It satisfies the following properties: \begin{itemize}
\item $N_W(W, Z) < 0$ if $W>p_W(Z)$, in other words, $N_W < 0$ for points to the right of the branch.
\item The minimum of $p_W(Z)$ is $0$ at $Z = 0$. Thus, the leftmost point of the branch is $(0, 0)$. Moreover, $p_W(Z)$ is decreasing for $Z < 0$ and increasing afterwards.
\item $(p_W(Z), Z)$ lies on $S > 0$ (that is $p_W(Z) > Z$) for $Z < 0$.
\end{itemize}
See Figure \ref{fig:partII3-3} for a picture of $N_W = 0$.

\begin{figure}[h]
\centering
\includegraphics[width=0.5\textwidth]{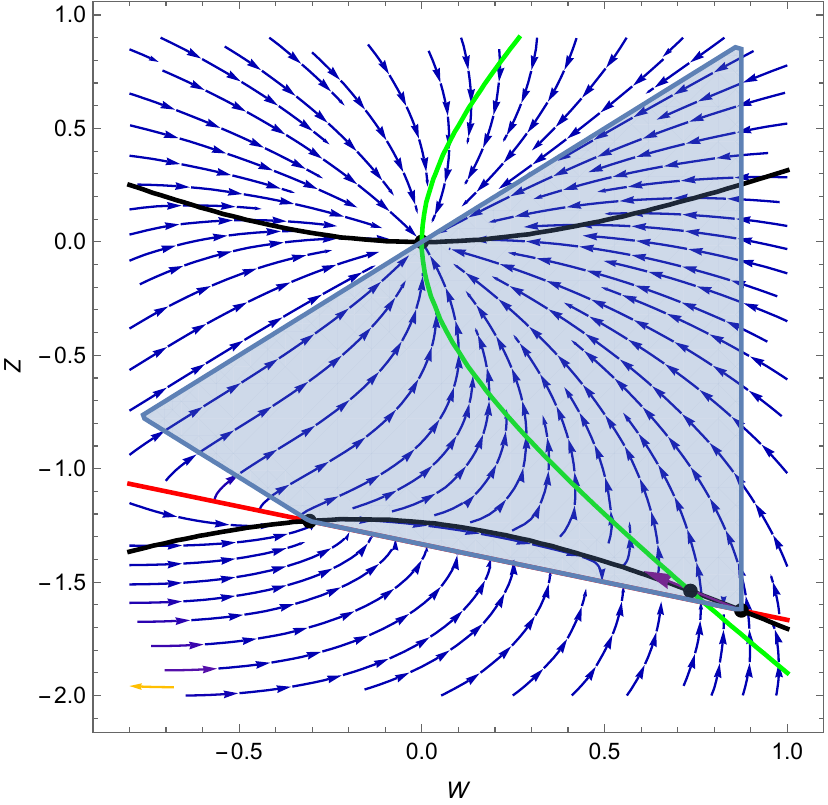}
\caption{Phase portrait in $(W, Z)$ coordinates for $r = 2.01$ (with fixed $d = 8, p = 3$). The green line represents $N_W = 0$ and we have also marked the point $P_\star$ which is the only solution to $N_W = N_Z = 0$ in the semiplane $W>Z$. The shaded quadrilateral corresponds to the restrictions $W < W_0$, $W>Z$, $D_Z > 0$, $U > U(\bar P_s)$.}
\label{fig:partII3-3}
\end{figure}

Defining
\begin{equation*}
   P_i = (W_i, Z_i) = \left( \frac{1}{26} \left(3 \sqrt{9 r^2-20 r+92}-9
   r+10\right),\frac{1}{26} \left(-\sqrt{9 r^2-20 r+92}+3
   r-38\right) \right),
\end{equation*}
which is the intersection of $(p_W(Z), Z)$ with $D_Z = 0$, one just needs to show that the solution cannot traverse $(p_W(Z), Z)$ for $Z \in (Z_i, 0)$ (we refer again to \cite{CaoLabora-GomezSerrano-Shi-Staffilani:nonradial-implosion-compressible-euler-ns-T3R3} for details and Figures illustrating the situation).

Since $N_W = 0$ over $(p_W(Z), Z)$, the barrier condition just reads $(0, N_Z / D_Z) \cdot (1, -p_W'(Z)) > 0$, so using that $D_Z > 0$ and that $p_W$ is decreasing for $Z < 0$, we just need to show $N_Z > 0$ over $(p_W(Z), Z)$ for $Z \in (Z_i, 0)$. 

The only solution to $N_W = N_Z = 0$ in the open halfplane $W > Z$ is given by
\begin{equation}
(W_\ast, Z_\ast) = \left( \frac{1}{5} \left(2 \sqrt{2}-1\right) r,-\frac{1}{5}
   \left(1+2 \sqrt{2}\right) r \right).
\end{equation}
It is obvious that $Z_\ast < 0$, and moreover we have that 
\begin{align*}
Z_\ast - Z_i \Big|_{r = r^\ast} = 0, \quad \mbox{ and } \quad \frac{d}{dr} (Z_\ast - Z_i ) \Big|_{r = r^\ast} =  - \frac{6 (113 + 111\sqrt{2} )}{1915} < 0
\end{align*}
Therefore, when $r$ is sufficiently close to $r^\ast$ from below, we have that $Z_i < Z_\ast < 0$. The sign of $N_Z(p_W(Z), Z)$ is constant for $Z \in (Z_\ast, 0)$ and moreover positive due to the expression $N_Z(p_W(Z), Z) = -rZ + O(Z^2)$. Thus, we just need to rule out the solution traversing $(p_W(Z), Z)$ from right to left for $Z \in (Z_i, Z_\ast )$.

We argue by contradiction. We define the region $\Delta$ determined by $N_Z < 0$, $N_W > 0$, $D_Z > 0$ and check that if the solution was going to traverse $(p_W(Z), Z)$ for $Z \in (Z_i, Z_\ast)$ (thus entering $\Delta $) it would not be able to exit $\Delta$ from any of the three sides. We refer to \cite[Lemma 4.1, Part II.1]{CaoLabora-GomezSerrano-Shi-Staffilani:nonradial-implosion-compressible-euler-ns-T3R3} for the detailed argument.

Once equipped with the knowledge that the trajectory lies on $N_W < 0$, we divide the proof in showing:
\begin{equation} \label{eq:part2_1}
1 + \frac{\bar U_R}{R} + \alpha \p_R \bar S > 0, \quad \mbox{ and } \quad
1 + \frac{\bar U_R}{R} - \alpha \p_R \bar S > 0.
\end{equation}
Going to $(U, S)$ variables, equation \eqref{eq:part2_1} can be written as
\begin{equation*}
\left( 1 + \frac{ W +  Z}{2}
\pm \alpha \frac{W - Z}{2} \right) \pm \frac{\alpha}{2} \left( \frac{N_W}{D_W} - \frac{N_Z}{D_Z} \right) > 0
\end{equation*}
(one inequality per choice of $\pm$). Given that the left parenthesis is $D_W$ or $D_Z$ (depending on the sign), and both $D_W, D_Z > 0$, it suffices to show
\begin{align} \label{part22}
\Xi_1 := D_W^2 D_Z + \frac{\alpha}{2} N_W D_Z - \frac{\alpha}{2} N_Z D_W &> 0 \,, \mbox{ and } \\
\label{part12}
 D_Z^2 D_W + \frac{\alpha}{2} N_Z D_W - \frac{\alpha}{2} N_W D_Z &> 0 \,,
\end{align}

where we remind the reader that because of our choice of parameters (specifically $\gamma = 2$), we have that $\alpha = \frac12$.

\, \\
\textbf{Part II.3 Showing \eqref{part22}} \\

First, we express $\Xi_1$ in $(U, S)$ coordinates, obtaining
\begin{equation*}
\Xi_1 = -\frac{1}{4} S (r (U+2)+U (7 U+6)-2)-\frac{1}{4} S^2
   (U+1) + (U+1)^3.
\end{equation*}

There are two real solutions $S_-(U) <  S_+(U)$ solving $\Xi_1 = 0$. Since $U = \frac{W+Z}{2} = \frac{D_Z + D_W}{2} - 1 > -1$, we have that $\Xi_1$ is positive when $S_- < S < S_+$. Since $S>0$ for our solution ($W > Z$), we just need to check $S < S_+(U)$.

We will show that the solution lies in the region $U > U(\bar P_s)$ (we defer that proof to the end of Part II.3). Thus, the solution lies inside the open quadrilateral region $Q$ determined by $D_Z > 0$, $W < W_0$, $W-Z > 0$, $U > U(\bar P_s)$. In Figure \ref{fig:partII3-3}, $Q$ corresponds to the shaded region. We will show $\Xi_1 |_{\partial Q} \geq 0$. This means that $S \leq S_+(U)$ for every point of $\partial Q$, implying that $S < S_+(U)$ on $Q$, and concluding the proof of \eqref{part22}. 

The side $W=Z$ obviously satisfies $S < S_+(U)$ since $S_+(U) > 0$.

Restricting $\Xi_1$ to $W = W_0$ we obtain a third-degree polynomial in $Z$ vanishing at $P_s$ (due to $N_Z(P_s) = D_Z(P_s) = 0$). We have:
\begin{align*}
\frac{\Xi_1}{Z-Z_0} = \frac{1}{4}   (r-Z_0 (Z_0+2)-2) 
+\frac{1}{16} (r-15 Z_0 - 23) (Z - Z_0)
+\frac{5}{16}
   (Z-Z_0)^2
\end{align*}
All the coefficients on the right hand side are positive for $r$ sufficiently close to $r^\ast$, since they converge to $-3+\frac{9}{2\sqrt{2}}$, $\frac{1}{16} (17 - 10\sqrt{2})$ and $\frac{5}{16}$ respectively as $r\to r^\ast$ (and all those numbers are positive).

Regarding the sign of $\Xi_1$ on the side $D_Z = 0$ of $Q$, we note that on this side $\Xi_1 = \frac{-\alpha}{2} N_Z D_W$, which has the sign of $-N_Z$. Solving $D_Z = 0$, we have that $N_Z$ over the line $D_Z = 0$ is the second degree polynomial $N_Z(W, -(4+W)/3)$, which vanishes at $\bar P_s$ and $P_s$ (solutions of $N_Z = D_Z = 0$). Since the quadratic coefficient is $\frac79 > 0$, we have that the second-degree polynomial is negative between the roots and positive outside.

Lastly, we need to treat the side $U = U(\bar P_s)$ of our quadrilateral region $Q$. We know that $S = S_+(U)$ at $\bar P_s$, since $\Xi_1 = 0$ there ($D_Z = N_Z = 0$). Therefore, we have that $S < S_+(U)$ for any other point along the line $U = U(\bar P_s)$ with smaller value of $S$. 

Let us finally show that $U > U(\bar P_s)$. We construct the barrier $b(t) = ( \bar W_0 - t, \bar Z_0 + t)$ and in order to ensure that the solution remains on its upper-right part we need to show that the following expression is positive:
\begin{align*}
\Xi_3 &= N_W(b(t)) D_Z(b(t)) +  N_Z(b(t)) D_W(b(t)) \\
& = \frac{1}{49} t \left( t (35 r-14 \mathcal{R}_1-133 ) +6 r^2+6 r \mathcal{R}_1+58 r-6 \mathcal{R}_1-64 \right).
\end{align*}
We just need to show that the parenthesis is positive, and since it is affine, it suffices to do so at the endpoints $t = 0$ and $t = \frac{\bar W_0 - \bar Z_0 }{2}$. We have that
\begin{align*}
&\left(  6 r^2+6 r \mathcal{R}_1+58 r-6 \mathcal{R}_1-64 \right) \Big|_{r=r^\ast} = -984+764\sqrt{2} > 0 \\
&\left( \frac{\bar W_0 - \bar Z_0 }{2} (35 r-14 \mathcal{R}_1-133 ) +6 r^2+6 r \mathcal{R}_1+58 r-6 \mathcal{R}_1-64 \right) \Big|_{r=r^\ast} = -492 + 382 \sqrt{2} > 0.
\end{align*}

\, \\
\textbf{Part II.4. Showing \eqref{part12}.} \\ 
Multiplying by $D_Z D_W$, we reduce the proof of \eqref{part12} to show positivity for
\begin{align*}
\Xi_2 := N_Z D_W - N_W D_Z  = S \left( -\frac{S^2}{2}+U (9 U+10) + r (U+2) \right).
\end{align*}

In particular, in the halfplane $S > 0$ we have that $\Xi_2 > 0$ whenever $$S < S_+(U) = \sqrt{2r(2+U) + 2U(10 + 9U)}.$$

Note that this definition of $S_+(U)$ is different from the one we gave in Part II.2. Each definition applies to its respective section. Note that $S_+(U)$ may be complex, and in that case there is no possible $S$ such that $\Xi_2 \geq 0$. 

We divide our argument in two regions $U \geq U(P_s)$ and $U < U(P_s)$. Regarding $U \geq U(P_s)$, we start noting that $\Xi_2 \geq 0$ along the line $D_Z = 0$, for points with $U \geq U(P_s)$. This comes from the observation that $\Xi_2 = -N_Z D_W$ along $D_Z = 0$, the fact that $D_W > 0$ along $D_Z = 0$, $W > Z$ and the observation from Part II.3 that, along $D_Z = 0$, $N_W$ is positive between its two roots ($P_s$ and $\bar P_s$) and $N_Z < 0$ outside. Denoting $S_{DZ}(U) = 2(1+U)$ to be the value of $S$ such that $(U, S)$ lies in the nullset of $D_Z$, we have that $S_{DZ(U)} \leq S_+ (U)$, for $U \geq U(P_s)$ with equality only at $U = U(P_s)$. Since the solution will be in $D_Z > 0$ for $R > 1$, we have that $S < S_{DZ}(U)$ from the condition $D_Z > 0$.

\begin{figure}[h]
\centering
\includegraphics[width=0.5\textwidth]{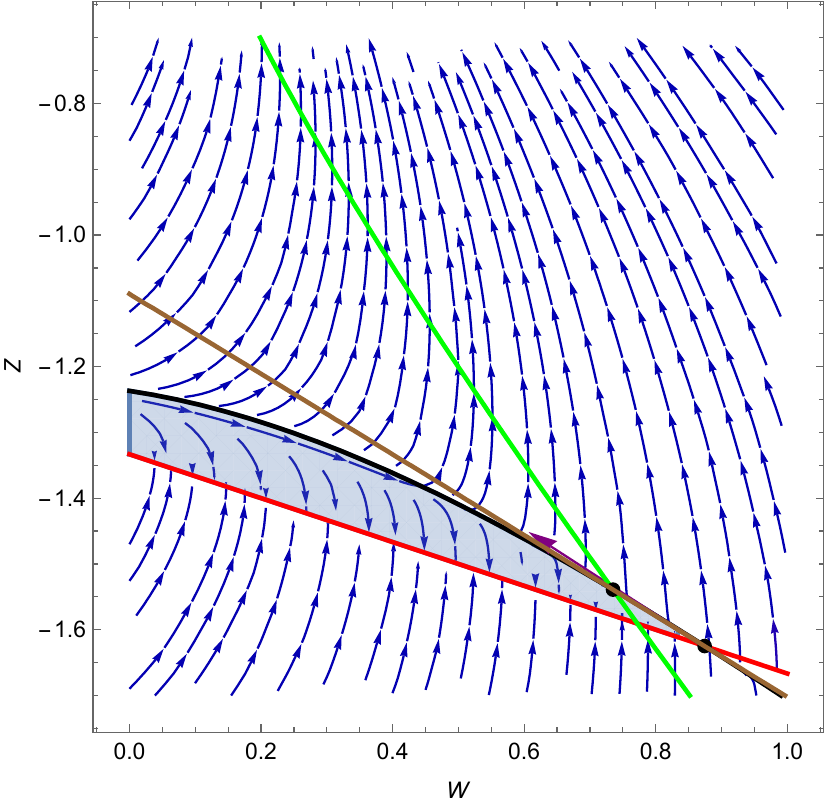}
\caption{Phase portrait in $(W, Z)$ coordinates for $r = 2.01$ (with out fixed $d = 8, p = 3$). The brown line represents $\Xi_2 = 0$, which is also $b(t)$ for $t \geq 0$ when looking at the part that lies to the left of $P_s$. $P_\star$ is the intersection of the green and black curves ($N_W = 0$ and $N_Z = 0$ respectively). We have shaded the region $D_Z > 0, N_Z < 0$. The brown curve enters the shaded region at $P_s$ and exits the region at $P_\star$ (difficult to distinguish the brown and black curves in the picture since they are almost coincident between $P_s$ and $P_\star$).}
\label{fig:partII4}
\end{figure}

Regarding $U \leq U(P_s)$, we denote by $b(t)$ the conic branch of $\Xi_2 = 0$, fixing $b(0) = P_s$, and with the orientation fixed by $D_Z(b(t)) > 0$ for small $t > 0$. We will show:
\begin{itemize}
\item $b(t_\star ) = P_\star$ for some $t_\star > 0$ (recall $P_\star$ is a solution to $N_Z = N_W = 0$ defined at the beginning of the proof).
\item $b(t)$ is in the region $N_Z < 0$, $D_Z > 0$, for $t\in (0, t_\star )$.
\item The solution does not traverse $b(t)$
\end{itemize}
This would conclude the proof, since we can use $N_W = 0$ to connect $P_\star$ with $P_\infty$, and we know that the solution remains in $N_W < 0$ due to Part II.2 (in particular, $\Xi_2 > 0$, since the only intersection of $\Xi_2$ with $N_W = 0$ [under $D_W > 0$, $W>Z$] is given by $P_\star$).

Regarding the first two items, we note that $b(t)$ starts in the region $D_Z > 0$ from its orientation. Since $N_W(P_s) < 0$ (Lemma \ref{lemma:auxiliary}) and $D_W(P_s) > 0$, from the expression of $\Xi_2 = N_Z D_W - N_W D_Z$, $b(t)$ has to lie in the region $N_Z < 0$ for $t \geq 0$ sufficiently small. We denote the region defined by $N_Z < 0, D_Z > 0$ by $E$. $E$ is bounded, since its boundary is described by the segment of $D_Z = 0$ from $P_s$ to $\bar P_s$ and the piece of $N_Z = 0$ also going from $P_s$ to $\bar P_s$.  $E$ is contained on the halfplane $W > Z$ since the hyperbola $N_Z = 0$ only intersects $W = Z$ twice: at $(0, 0)$ and $(-r, -r)$. The intersection at $(0, 0)$ corresponds to the upper branch (which is not the one bounding $E$) and the intersection $(-r, -r)$ is located in $D_Z < 0$ (thus, it is not on the boundary of $E$. Moreover, $b(t)$ has to exit $E$ at some point (if $b(t)$ is unbounded that is obvious, if it is an ellipse, it needs to reach $P_s$ from the side $D_Z < 0$). 

Now, we show that the first point of exit of $b(t)$ from $E$ is actually $P_\star$, thus proving the first two items that we claimed. If $b(t)$ exits through a point with $D_Z > 0$ (and therefore $N_Z = 0$) of $\partial E$, then from $\Xi_2 = 0$, we conclude $N_W D_Z = 0$, so $N_W = 0$. Thus, that point of exit satisfies $N_W = N_Z = 0$, and since $E$ is contained in the semiplane $W > Z$, must be $P_\star$ (the only solution to $N_W = N_Z = 0$ on that semiplane). Therefore, we just need to discard the possibility that $b(t)$ exits $E$ through $D_Z = 0$. In such case, one would have $\Xi_2 = N_Z D_W = 0$ at the point of exit, so $N_Z = 0$, meaning that the exit point is $\bar P_s$ (the only other solution to $N_Z = D_Z = 0$ apart from $P_s$). However, we see from Lemma \ref{lemma:auxiliary} that $N_W (P_s) < 0$. Since $\lim_{r \to r^\ast} N_W(\bar P_s) = \frac{60}{49} \left(13 \sqrt{2}-17\right) = 1.69\ldots$ has a different sign, if $b(t)$ goes from $P_s$ to $\bar P_s$, it would need to cross $N_W = 0$ at some intermediate time. That is impossible while in the region $E$ defined by $D_Z > 0, N_Z < 0$, since it would imply $\Xi_2 = N_Z D_W = 0$. We conclude that $b(t)$ exits $E$ through point $P_\star$ at some time $t_\star$ and remains in $E$ for $t \in (0, t_\star)$.

Lastly, let us show that the solution does not traverse $b(t)$. Given the implicit definition of $b(t)$ as $\Xi_2 = 0$, a normal vector of $\Xi_2$ pointing down-left is given by $\nabla (N_W D_Z - N_Z D_W)$. We have that
\begin{equation*}
\left( -1, -1 \right) \cdot \nabla (N_W D_Z - N_Z D_W) = \frac{1}{2} (W-Z) (10+r+9W+9Z) = S \left( 10 + r + 18U \right).
\end{equation*}
We show that this quantity is positive. Since the parenthesis is increasing with $U$ it suffices to show that it is positive for $U = U(P_\star)$ in Lemma \ref{lemma:auxiliary}. This concludes that in both of our cases, on the arc $\Xi_2 = 0$, the field points always in the lower-left direction.

As a conclusion, we know that the solution cannot traverse $b(t)$ for $t\in (0, t_\star)$ and cannot traverse $N_W = 0$ by Part II.2 (in particular, it cannot traverse the branch from $P_\star$ to $P_\infty$). Therefore, the concatenation of $b(t)$ and $N_W = 0$ is a curve joining $P_s$ and $P_\infty$ that the solution cannot traverse. We clearly have $S \leq S_+(U)$ along that curve ($S = S_+(U)$ along $b(t)$ and then $N_W$ does not intersect $b(t)$ again, so $S < S_+(U)$). Since we have that $S \leq S_+(U)$ at the boundary of our region, we conclude $S < S_+(U)$ in the interior, and therefore the solution satisfies $\Xi_2 > 0$.

\end{proof}

\begin{lemma} \label{lemma:auxiliary} For $(d, \gamma ) = (8, 2)$ and $r$ sufficiently close to $r^\ast$ from below, we have: \begin{itemize}
\item $W_1 + Z_1 < 0$.
\item $N_W(P_s) < 0$. 
\end{itemize}
\end{lemma}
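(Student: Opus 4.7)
The plan is to verify both items by direct algebraic computation, using the explicit formulas for $P_s$, $W_1$, $Z_1$ and the defining relation $\mathcal{R}_1^2 = r^2 - 44r + 92$, evaluating at $r = r^\ast = 5(\sqrt{2}-1)$, and extending the strict inequalities to a left neighborhood of $r^\ast$ by continuity. The key preliminary simplification is that $r^{\ast 2} - 44 r^\ast + 92 = 387 - 270\sqrt{2} = (15 - 9\sqrt{2})^2$, so $\mathcal{R}_1(r^\ast) = 15 - 9\sqrt{2}$; in particular $r^\ast - \mathcal{R}_1(r^\ast) - 8 = 14(\sqrt{2} - 2) \neq 0$ and $-r^\ast + \mathcal{R}_1(r^\ast) + 8 = 14(2-\sqrt{2}) \neq 0$, so every rational expression appearing in the formulas for $W_1$, $Z_1$ and in $P_s$ is regular at $r^\ast$.

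For the second item, substitute $(W_0, Z_0)$ into $N_W(W, Z) = -rW - \tfrac{13}{8}W^2 - \tfrac{1}{4}WZ + \tfrac{7}{8}Z^2$ and use $\mathcal{R}_1^2 = r^2 - 44r + 92$ to place the result in the canonical form $A(r) + B(r)\,\mathcal{R}_1$ with $A, B \in \mathbb{Q}[r]$. Evaluating at $r = r^\ast$ with $\mathcal{R}_1(r^\ast) = 15 - 9\sqrt{2}$ gives a concrete element of $\mathbb{Q}(\sqrt{2})$ whose sign is a routine arithmetic check. If it is strictly negative, continuity in $r$ closes the item. If it vanishes, differentiate $N_W(P_s)$ in $r$, evaluate at $r^\ast$, and verify the derivative has the sign that forces $N_W(P_s) < 0$ on some interval $(r^\ast - \varepsilon, r^\ast)$, exactly as in the similar Taylor-expansion arguments already carried out in the proof of Lemma \ref{lemma:angular_repulsivity}.

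For the first item, combine $W_1 + Z_1$ over the common denominator $r - \mathcal{R}_1 - 8$ and substitute $r = r^\ast$. A direct simplification (rationalize $1/(2-\sqrt{2})$ against $2+\sqrt{2}$) shows $W_1(r^\ast) = 0$, so the task reduces to computing $Z_1(r^\ast)$ and checking its sign. To evaluate $Z_1(r^\ast)$ one needs to simplify $\mathcal{R}_2$ at $r^\ast$: the radicand of $\mathcal{R}_2$ is a polynomial in $r$ and $\mathcal{R}_1$, and at $r = r^\ast$ with $\mathcal{R}_1 = 15 - 9\sqrt{2}$ it reduces to a specific element of $\mathbb{Q}(\sqrt{2})$; one then recognizes it (as for $\mathcal{R}_1$ above) as a perfect square in $\mathbb{Q}(\sqrt{2})$, extracts its square root, and reads off the sign of $Z_1(r^\ast)$. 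If the resulting value is strictly negative we are done; otherwise we fall back on a Taylor expansion of $W_1 + Z_1$ in $r$ around $r^\ast$, as before.

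The main obstacle is the nested-radical simplification for $\mathcal{R}_2$ at $r = r^\ast$: its outer radicand lies in $\mathbb{Q}(\sqrt{2})$, and identifying it as a perfect square in $\mathbb{Q}(\sqrt{2})$ (by matching $a^2 + 2b^2$ and $2ab$) is the nontrivial computation. Once that simplification is secured, both items reduce to mechanical sign checks in $\mathbb{Q}(\sqrt{2})$, and continuity in $r$ propagates the strict inequalities to the required one-sided neighborhood of $r^\ast$.
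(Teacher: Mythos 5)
Your plan is correct in outline, but it takes a genuinely different route from the paper, and it is worth flagging that you will land squarely in the degenerate branch you anticipate: both $W_1 + Z_1$ and $N_W(P_s)$ vanish identically at $r = r^\ast$. Indeed, the paper's computation ultimately reduces each sign check to the factor $r^2 + 10r - 25$, of which $r^\ast = 5(\sqrt{2}-1)$ is precisely the upper root. (You already noticed $W_1(r^\ast) = 0$; the same degeneracy infects $Z_1$ and $N_W(P_s)$.) So in your scheme you cannot get away with a sign check at $r^\ast$ plus continuity — you will always need the derivative/Taylor fallback, and that derivative involves $\mathcal{R}_2$ and its $r$-derivative.

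The paper sidesteps the nested-radical problem entirely and gives a cleaner argument. To show the numerator of $W_1 + Z_1$ is positive, it bounds $\sqrt{2}\,\mathcal{R}_2$ against the remaining terms by \emph{squaring once}: the quantity $A = (\sqrt{2}\mathcal{R}_2)^2 - (\text{rest})^2$ is a polynomial in $r$ and $\mathcal{R}_1$ (no outer radical). To finish, it isolates the $\mathcal{R}_1$-term in $A$ and \emph{squares again}, obtaining $B = -19208(r-1)(3r+1)(r^2+10r-25)$, a pure polynomial in $r$ whose sign on $(1, r^\ast)$ is immediate. The same two-step squaring reduces $N_W(P_s) < 0$ to $C = -392(r-1)(r^2+10r-25) > 0$. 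This buys two things your plan does not: (i) you never have to simplify $\mathcal{R}_2$ or its derivative at $r^\ast$ — the nested radical is rationalized away; and (ii) you get the inequality on the entire interval $(1, r^\ast)$, not merely an unspecified one-sided neighborhood, which is useful elsewhere in the paper. Your Taylor-expansion route would also work, but it trades a clean algebraic factorization for a substantially heavier computation (differentiating the $\mathcal{R}_2$-expression and identifying the sign of the result in $\mathbb{Q}(\sqrt{2})$), and it proves a weaker statement.
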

\begin{proof}
\textbf{Item 1} \\
We have that 
\begin{equation*}
\frac{7 r (-29 r+29 \mathcal R_1+16)-448 \mathcal R_1+\sqrt{2} \mathcal R_2-1624}{147 (r-\mathcal R_1-8)}
\end{equation*}
and since $\lim_{r\to r^\ast}  (r-\mathcal R_1-8) = 14(-2+\sqrt{2}) < 0$, we just need to show that the numerator is positive. Since $\sqrt{2} \mathcal R_2$ is trivially positive, we just need to show that it dominates the rest. That is, we need to show $A > 0$ for 
\begin{align*}
A &= (\sqrt{2} \mathcal R_2)^2 - \left( 7 r (-29 r+29 \mathcal R_1+16)-448 \mathcal R_1-1624 \right)^2 \\ 
&= -4704 (r-1) \left(725 r-1070 + 6 r^3-4 r^2+(85-6r^2-128 r ) \mathcal R_1\right)
\end{align*}
Therefore, we just need to show that the right parenthesis is negative. Since $r > 1$, $\mathcal R_1 > 0$, it is obvious that $(85-6r^2-128 r ) \mathcal R_1 < 0$, so we just need to show that this term dominates the other one. That is, we need to show that $B > 0$ for
\begin{equation*}
B = ((85-6r^2-128 r ) \mathcal R_1)^2 - (725 r-1070 + 6 r^3-4 r^2)^2 = -19208 (r-1) (3 r+1) (r^2 + 10r -25).
\end{equation*}
Therefore, the conclusion follows from $r^2 + 10r - 25 < 0$, which holds because $r^\ast = 5(\sqrt{2}-1)$ is the upper root of the polynomial $r^2 + 10r - 25$. \\

\textbf{Item 2} \\ 
We have that 
\begin{equation*}
N_W(P_s) = \frac{2}{49}\left(  \left(2 r^2+59 r-110\right) -  (2 r+5) \mathcal{R}_1 \right)
\end{equation*}
Therefore, in order to show that it is negative, it suffices to show that the term $(2r+5)\mathcal{R}_1$ dominates the parenthesis. Therefore, we just need to show $C > 0$ for 
\begin{equation*}
C = \left( (2 r+5) \mathcal{R}_1 \right)^2 -  \left(2 r^2+59 r-110\right)^2 = -392 (r-1) (r^2 + 10 r-25)
\end{equation*}
Therefore, we see that $C > 0$ using again that $r < r^\ast = 5(\sqrt{2}-1)$ and noticing that $r^\ast$ is the upper root of the polynomial $r^2 + 10 r-25$.

\end{proof}

\begin{lemma} \label{lemma:integrated_repulsivity} We have that the profiles $(\bar U, \bar S)$ satisfy
\begin{equation}
R + \bar U_R - \alpha \bar S > (R-1)\eta
\end{equation}
for $R > 1$.
\end{lemma}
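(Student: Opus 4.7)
The plan is a direct fundamental theorem of calculus argument, starting from the critical point condition at $R=1$ and using radial repulsivity to control the derivative of the relevant quantity. Define $f(R) := R + \bar U_R(R) - \alpha \bar S(R)$. By the critical point condition \eqref{critical point condition}, after accounting for the factor-of-$2$ rescaling between the $(\bar U_{p,R}, S_p)$ and $(\bar U_R, \bar S)$ conventions discussed in the remark preceding Lemma \ref{lemma:angular_repulsivity}, we obtain $f(1) = 0$.

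Differentiating, $f'(R) = 1 + \p_R \bar U_R - \alpha \p_R \bar S$. Since $-\alpha \p_R \bar S \geq -\alpha |\p_R \bar S|$ regardless of the sign of $\p_R \bar S$, we have
\begin{equation*}
f'(R) \;\geq\; 1 + \p_R \bar U_R - \alpha |\p_R \bar S| \;>\; \tilde\eta,
\end{equation*}
where the last inequality is the radial repulsivity property \eqref{eq:radial_repulsivity} (in the unrescaled convention of Lemma \ref{lemma:angular_repulsivity}). Integrating from $1$ to $R$ gives
\begin{equation*}
f(R) \;=\; \int_1^R f'(\rho)\,d\rho \;>\; (R-1)\tilde\eta \qquad \text{for } R>1,
\end{equation*}
which is the desired inequality with $\eta = \tilde\eta$. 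There is no real obstacle here; the only subtlety is bookkeeping of the factor-of-$2$ between the two conventions used for profiles in the paper, which ensures both the initial value $f(1)=0$ and the form of the radial repulsivity bound are stated consistently.
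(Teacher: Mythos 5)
Your proposal is correct and is essentially identical to the paper's proof: both establish $f(1)=0$ from the critical point condition, note that $f'(R) > \tilde\eta$ via radial repulsivity, and integrate from $1$ to $R$. The extra care you take with the factor-of-$2$ bookkeeping between conventions is a useful clarification but does not change the argument.
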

\begin{proof} From \cite{Merle-Raphael-Rodnianski-Szeftel:implosion-i}, \cite{Buckmaster-CaoLabora-GomezSerrano:implosion-compressible} we have that the profiles constructed satisfy $R+\bar U_R - \alpha \bar S = 0$ for $R=1$ (i.e. $D_Z(P_s) = 0$). 
Therefore, using the fundamental theorem of calculus and \eqref{eq:radial_repulsivity}, we have
\begin{align*}
R+\bar U_R - \alpha \bar S = \int_1^R \left( \tilde R + \bar U_R' (\tilde R)  - \alpha \bar S' (\tilde R) \right) d\tilde R > \int_1^R \eta d\tilde R.
\end{align*}
\end{proof}

\bibliographystyle{plain}
\bibliography{references.bib}

\def\cprime{$'$}
\begin{thebibliography}{10}

\bibitem{Biasi:self-similar-compressible-euler}
Anxo Biasi.
\newblock Self-similar solutions to the compressible {E}uler equations and
  their instabilities.
\newblock {\em Commun. Nonlinear Sci. Numer. Simul.}, 103:Paper No. 106014, 28,
  2021.

\bibitem{Bourgain:fourier-transform-restriction-i-nls}
J.~Bourgain.
\newblock Fourier transform restriction phenomena for certain lattice subsets
  and applications to nonlinear evolution equations. {I}. {S}chr\"{o}dinger
  equations.
\newblock {\em Geom. Funct. Anal.}, 3(2):107--156, 1993.

\bibitem{Bourgain:gwp-defocusing-critical-nls-radial}
J.~Bourgain.
\newblock Global wellposedness of defocusing critical nonlinear
  {S}chr\"{o}dinger equation in the radial case.
\newblock {\em J. Amer. Math. Soc.}, 12(1):145--171, 1999.

\bibitem{Bourgain:problems-hamiltonian-pde}
J.~Bourgain.
\newblock Problems in {H}amiltonian {PDE}'s.
\newblock Number Special Volume, Part I, pages 32--56. 2000.
\newblock GAFA 2000 (Tel Aviv, 1999).

\bibitem{Bourgain-Demeter:proof-decoupling}
Jean Bourgain and Ciprian Demeter.
\newblock The proof of the {$l^2$} decoupling conjecture.
\newblock {\em Ann. of Math. (2)}, 182(1):351--389, 2015.

\bibitem{Buckmaster-CaoLabora-GomezSerrano:implosion-compressible-review}
Tristan Buckmaster, Gonzalo Cao-Labora, and Javier G\'{o}mez-Serrano.
\newblock Smooth self-similar imploding profiles to 3{D} compressible {E}uler.
\newblock {\em Quart. Appl. Math.}, 81(3):517--532, 2023.

\bibitem{Buckmaster-CaoLabora-GomezSerrano:implosion-compressible}
Tristan Buckmaster, Gonzalo Cao-Labora, and Javier G\'omez-Serrano.
\newblock Smooth imploding solutions for 3d compressible fluids.
\newblock {\em Forum of Mathematics, Pi}, 2024.
\newblock To appear.

\bibitem{Camps-Staffilani:modified-scattering-cubic-nls-diophantine}
Nicolas Camps and Gigliola Staffilani.
\newblock Modified scattering for the cubic {S}chrödinger equation on
  {D}iophantine waveguides.
\newblock {\em Arxiv preprint arXiv:2404.16817}, 2024.

\bibitem{CaoLabora-GomezSerrano-Shi-Staffilani:nonradial-implosion-compressible-euler-ns-T3R3}
Gonzalo Cao-Labora, Javier G\'omez-Serrano, Jia Shi, and Gigliola Staffilani.
\newblock Non-radial implosion for compressible {E}uler and {N}avier-{S}tokes
  in $\mathbb{T}^3$ and $\mathbb{R}^3$.
\newblock {\em arXiv preprint arXiv:2310.05325}, 2023.

\bibitem{Cazenave:semilinear-nls-book}
Thierry Cazenave.
\newblock {\em Semilinear {S}chr\"{o}dinger equations}, volume~10 of {\em
  Courant Lecture Notes in Mathematics}.
\newblock New York University, Courant Institute of Mathematical Sciences, New
  York; American Mathematical Society, Providence, RI, 2003.

\bibitem{Cazenave-Weissler:cauchy-problem-nls-hs}
Thierry Cazenave and Fred~B. Weissler.
\newblock The {C}auchy problem for the critical nonlinear {S}chr\"{o}dinger
  equation in {$H^s$}.
\newblock {\em Nonlinear Anal.}, 14(10):807--836, 1990.

\bibitem{Chen:vorticity-blowup-compressible-euler-rd}
Jiajie Chen.
\newblock Vorticity blowup in compressible {E}uler equations in $\mathbb{R}^d,
  d\geq 3$.
\newblock {\em arXiv preprint arXiv:2408.04319}, 2024.

\bibitem{Chen-Cialdea-Shkoller-Vicol:vorticity-blowup-2d-compressible-euler}
Jiajie Chen, Giorgio Cialdea, Steve Shkoller, and Vlad Vicol.
\newblock Vorticity blowup in 2{D} compressible {E}uler equations.
\newblock {\em arXiv preprint arXiv:2407.06455}, 2024.

\bibitem{Christ-Colliander-Tao:asymptotics-modulation-low-regularity-illposedness-defocusing}
Michael Christ, James Colliander, and Terrence Tao.
\newblock Asymptotics, frequency modulation, and low regularity ill-posedness
  for canonical defocusing equations.
\newblock {\em Amer. J. Math.}, 125(6):1235--1293, 2003.

\bibitem{Colliander-Keel-Staffilani-Takaoka-Tao:almost-conservation-global-rough-solutions-nls}
J.~Colliander, M.~Keel, G.~Staffilani, H.~Takaoka, and T.~Tao.
\newblock Almost conservation laws and global rough solutions to a nonlinear
  {S}chr\"{o}dinger equation.
\newblock {\em Math. Res. Lett.}, 9(5-6):659--682, 2002.

\bibitem{Colliander-Keel-Staffilani-Takaoka-Tao:global-wellposedness-energy-critical-nls-R3}
J.~Colliander, M.~Keel, G.~Staffilani, H.~Takaoka, and T.~Tao.
\newblock Global well-posedness and scattering for the energy-critical
  nonlinear {S}chr\"{o}dinger equation in {$\Bbb R^3$}.
\newblock {\em Ann. of Math. (2)}, 167(3):767--865, 2008.

\bibitem{Colliander-Simpson-Sulem:numerical-supercritical-nls}
J.~Colliander, G.~Simpson, and C.~Sulem.
\newblock Numerical simulations of the energy-supercritical nonlinear
  {S}chr\"{o}dinger equation.
\newblock {\em J. Hyperbolic Differ. Equ.}, 7(2):279--296, 2010.

\bibitem{Colombo-Haffter:global-regularity-nlw-slightly-supercritical}
Maria Colombo and Silja Haffter.
\newblock Global regularity for the nonlinear wave equation with slightly
  supercritical power.
\newblock {\em Anal. PDE}, 16(3):613--642, 2023.

\bibitem{DeSilva-Pavlovic-Staffilani-Tzirakis:gwp-periodic-nls}
Daniela De~Silva, Nata\v{s}a Pavlovi\'{c}, Gigliola Staffilani, and Nikolaos
  Tzirakis.
\newblock Global well-posedness for a periodic nonlinear {S}chr\"{o}dinger
  equation in 1{D} and 2{D}.
\newblock {\em Discrete Contin. Dyn. Syst.}, 19(1):37--65, 2007.

\bibitem{Deng-Germain:growth-nls-irrational-tori}
Yu~Deng and Pierre Germain.
\newblock Growth of solutions to {NLS} on irrational tori.
\newblock {\em Int. Math. Res. Not. IMRN}, (9):2919--2950, 2019.

\bibitem{Deng-Germain-Guth:strichartz-nls-irrational-tori}
Yu~Deng, Pierre Germain, and Larry Guth.
\newblock Strichartz estimates for the {S}chr\"{o}dinger equation on irrational
  tori.
\newblock {\em J. Funct. Anal.}, 273(9):2846--2869, 2017.

\bibitem{Deng-Germain-Guth-RydinMyerson:strichartz-nls-nonrectangular-tori}
Yu~Deng, Pierre Germain, Larry Guth, and Simon~L. Rydin~Myerson.
\newblock Strichartz estimates for the {S}chr\"{o}dinger equation on
  non-rectangular two-dimensional tori.
\newblock {\em Amer. J. Math.}, 144(3):701--745, 2022.

\bibitem{Dodson:gwp-scattering-defocusing-nls-d-geq-3}
Benjamin Dodson.
\newblock Global well-posedness and scattering for the defocusing,
  {$L^{2}$}-critical nonlinear {S}chr\"{o}dinger equation when {$d\geq3$}.
\newblock {\em J. Amer. Math. Soc.}, 25(2):429--463, 2012.

\bibitem{Dodson:gwp-scattering-defocusing-nls-d-eq-1}
Benjamin Dodson.
\newblock Global well-posedness and scattering for the defocusing, {$L^2$}
  critical, nonlinear {S}chr\"{o}dinger equation when {$d=1$}.
\newblock {\em Amer. J. Math.}, 138(2):531--569, 2016.

\bibitem{Dodson:gwp-scattering-defocusing-nls-d-eq-2}
Benjamin Dodson.
\newblock Global well-posedness and scattering for the defocusing,
  {$L^2$}-critical, nonlinear {S}chr\"{o}dinger equation when {$d=2$}.
\newblock {\em Duke Math. J.}, 165(18):3435--3516, 2016.

\bibitem{Dodson:scattering-defocusing-nls-critical}
Benjamin Dodson.
\newblock Scattering for the defocusing, nonlinear {S}chr\"{o}dinger equation
  with initial data in a critical space.
\newblock {\em Int. Math. Res. Not. IMRN}, (23):19932--19962, 2023.

\bibitem{Efthimiou-Frye:spherical-harmonics}
Costas Efthimiou and Christopher Frye.
\newblock {\em Spherical harmonics in {$p$} dimensions}.
\newblock World Scientific Publishing Co. Pte. Ltd., Hackensack, NJ, 2014.

\bibitem{Fan-Staffilani-Wang-Wilson:bilinear-strichartz-irrational}
Chenjie Fan, Gigliola Staffilani, Hong Wang, and Bobby Wilson.
\newblock On a bilinear {S}trichartz estimate on irrational tori.
\newblock {\em Anal. PDE}, 11(4):919--944, 2018.

\bibitem{Ginibre-Velo:class-nls-i-cauchy}
J.~Ginibre and G.~Velo.
\newblock On a class of nonlinear {S}chr\"{o}dinger equations. {I}. {T}he
  {C}auchy problem, general case.
\newblock {\em J. Functional Analysis}, 32(1):1--32, 1979.

\bibitem{Giuliani-Guardia:sobolev-explosion-cubic-nls-irrational}
Filippo Giuliani and Marcel Guardia.
\newblock Sobolev norms explosion for the cubic {NLS} on irrational tori.
\newblock {\em Nonlinear Anal.}, 220:Paper No. 112865, 25, 2022.

\bibitem{Grillakis:nls}
Manoussos~G. Grillakis.
\newblock On nonlinear {S}chr\"{o}dinger equations.
\newblock {\em Comm. Partial Differential Equations}, 25(9-10):1827--1844,
  2000.

\bibitem{Guderley:singularities-radial}
G.~Guderley.
\newblock Starke kugelige und zylindrische {V}erdichtungsst\"{o}sse in der
  {N}\"{a}he des {K}ugelmittelpunktes bzw. der {Z}ylinderachse.
\newblock {\em Luftfahrtforschung}, 19:302--311, 1942.

\bibitem{Herr-Kwak:strichartz-estimates-gwp-cubic-nls-t2}
Sebastian Herr and Beomjong Kwak.
\newblock Strichartz estimates and global well-posedness of the cubic {NLS} on
  {$\Bbb {T}^2$}.
\newblock {\em Forum Math. Pi}, 12:Paper No. e14, 2024.

\bibitem{Herr-Tataru-Tzvetkov:gwp-critical-nls-small-data-h1-t3}
Sebastian Herr, Daniel Tataru, and Nikolay Tzvetkov.
\newblock Global well-posedness of the energy-critical nonlinear
  {S}chr\"{o}dinger equation with small initial data in {$H^1(\Bbb T^3)$}.
\newblock {\em Duke Math. J.}, 159(2):329--349, 2011.

\bibitem{Hrabski-Pan-Staffilani-Wilson:energy-transfer-nls-irrational-tori}
Alexander Hrabski, Yulin Pan, Gigliola Staffilani, and Bobby Wilson.
\newblock Energy transfer for solutions to the nonlinear {S}chrödinger
  equation on irrational tori.
\newblock {\em Arxiv preprint arXiv:2107.01459}, 2021.

\bibitem{Ionescu-Pausader:energy-critical-defocusing-nls-t3}
Alexandru~D. Ionescu and Benoit Pausader.
\newblock The energy-critical defocusing {NLS} on {$\Bbb T^3$}.
\newblock {\em Duke Math. J.}, 161(8):1581--1612, 2012.

\bibitem{Jenssen-Tsikkou:amplitude-blowup-radial-isentropic-euler}
Helge~Kristian Jenssen and Charis Tsikkou.
\newblock Amplitude blowup in radial isentropic {E}uler flow.
\newblock {\em SIAM J. Appl. Math.}, 80(6):2472--2495, 2020.

\bibitem{Jenssen-Tsikkou:radially-symmetric-non-isentropic-euler}
Helge~Kristian Jenssen and Charis Tsikkou.
\newblock {Radially symmetric non-isentropic Euler flows: Continuous blowup
  with positive pressure}.
\newblock {\em Physics of Fluids}, 35(1):016117, 01 2023.

\bibitem{Kato:cauchy-problem-gkdv}
Tosio Kato.
\newblock On the {C}auchy problem for the (generalized) {K}orteweg-de {V}ries
  equation.
\newblock In {\em Studies in applied mathematics}, volume~8 of {\em Adv. Math.
  Suppl. Stud.}, pages 93--128. Academic Press, New York, 1983.

\bibitem{Kenig-Merle:scattering-h12-cubic-defocusing-nls}
Carlos~E. Kenig and Frank Merle.
\newblock Scattering for {$\dot H^{1/2}$} bounded solutions to the cubic,
  defocusing {NLS} in 3 dimensions.
\newblock {\em Trans. Amer. Math. Soc.}, 362(4):1937--1962, 2010.

\bibitem{Kenig-Merle:nondispersive-radial-nlw}
Carlos~E. Kenig and Frank Merle.
\newblock Nondispersive radial solutions to energy supercritical non-linear
  wave equations, with applications.
\newblock {\em Amer. J. Math.}, 133(4):1029--1065, 2011.

\bibitem{Killip-Tao-Visan:cubic-nls-2d-radial}
Rowan Killip, Terence Tao, and Monica Visan.
\newblock The cubic nonlinear {S}chr\"{o}dinger equation in two dimensions with
  radial data.
\newblock {\em J. Eur. Math. Soc. (JEMS)}, 11(6):1203--1258, 2009.

\bibitem{Killip-Visan:energy-supercritical-nls}
Rowan Killip and Monica Visan.
\newblock Energy-supercritical {NLS}: critical {$\dot H^s$}-bounds imply
  scattering.
\newblock {\em Comm. Partial Differential Equations}, 35(6):945--987, 2010.

\bibitem{Krieger-Schlag:large-global-supercritical-nlw-r3+1}
Joachim Krieger and Wilhelm Schlag.
\newblock Large global solutions for energy supercritical nonlinear wave
  equations on {$\Bbb R^{3+1}$}.
\newblock {\em J. Anal. Math.}, 133:91--131, 2017.

\bibitem{Merle-Raphael:universality-blowup-l2-nls}
Frank Merle and Pierre Raphael.
\newblock On universality of blow-up profile for {$L^2$} critical nonlinear
  {S}chr\"{o}dinger equation.
\newblock {\em Invent. Math.}, 156(3):565--672, 2004.

\bibitem{Merle-Raphael-Rodnianski-Szeftel:implosion-nls}
Frank Merle, Pierre Rapha\"{e}l, Igor Rodnianski, and Jeremie Szeftel.
\newblock On blow up for the energy super critical defocusing nonlinear
  {S}chr\"{o}dinger equations.
\newblock {\em Invent. Math.}, 227(1):247--413, 2022.

\bibitem{Merle-Raphael-Rodnianski-Szeftel:implosion-i}
Frank Merle, Pierre Rapha\"{e}l, Igor Rodnianski, and Jeremie Szeftel.
\newblock On the implosion of a compressible fluid {I}: smooth self-similar
  inviscid profiles.
\newblock {\em Ann. of Math. (2)}, 196(2):567--778, 2022.

\bibitem{Merle-Raphael-Rodnianski-Szeftel:implosion-ii}
Frank Merle, Pierre Rapha\"{e}l, Igor Rodnianski, and Jeremie Szeftel.
\newblock On the implosion of a compressible fluid {II}: singularity formation.
\newblock {\em Ann. of Math. (2)}, 196(2):779--889, 2022.

\bibitem{Murphy:defocusing-h12-nls-high-dimensions}
Jason Murphy.
\newblock The defocusing {$\dot H^{1/2}$}-critical {NLS} in high dimensions.
\newblock {\em Discrete Contin. Dyn. Syst.}, 34(2):733--748, 2014.

\bibitem{Ogawa-Tsutsumi:blowup-nls-quartic-potential-periodic}
Takayoshi Ogawa and Yoshio Tsutsumi.
\newblock Blow-up of solutions for the nonlinear {S}chr\"{o}dinger equation
  with quartic potential and periodic boundary condition.
\newblock In {\em Functional-analytic methods for partial differential
  equations ({T}okyo, 1989)}, volume 1450 of {\em Lecture Notes in Math.},
  pages 236--251. Springer, Berlin, 1990.

\bibitem{Oh:blowup-periodic-nls}
Tadahiro Oh.
\newblock A blowup result for the periodic {NLS} without gauge invariance.
\newblock {\em C. R. Math. Acad. Sci. Paris}, 350(7-8):389--392, 2012.

\bibitem{Planchon-Raphael:existence-stability-loglog-critical-nls-domain}
Fabrice Planchon and Pierre Rapha\"{e}l.
\newblock Existence and stability of the log-log blow-up dynamics for the
  {$L^2$}-critical nonlinear {S}chr\"{o}dinger equation in a domain.
\newblock {\em Ann. Henri Poincar\'{e}}, 8(6):1177--1219, 2007.

\bibitem{Ryckman-Visan:gwp-scattering-nls-r14}
E.~Ryckman and M.~Visan.
\newblock Global well-posedness and scattering for the defocusing
  energy-critical nonlinear {S}chr\"{o}dinger equation in {$\Bbb R^{1+4}$}.
\newblock {\em Amer. J. Math.}, 129(1):1--60, 2007.

\bibitem{Schippa:improved-gwp-mass-critical-nls-tori}
Robert Schippa.
\newblock Improved global well-posedness for mass-critical nonlinear
  {S}chr\"{o}dinger equations on tori.
\newblock {\em J. Differential Equations}, 412:87--139, 2024.

\bibitem{Shao-Wei-Zhang:blowup-defocusing-nlw}
Feng Shao, Dongyi Wei, and Zhifei Zhang.
\newblock On blow-up for the supercritical defocusing nonlinear wave equation.
\newblock {\em arXiv preprint arXiv:2405.19674}, 2024.

\bibitem{Shao-Wei-Zhang:self-similar-implosion-relativistic-euler}
Feng Shao, Dongyi Wei, and Zhifei Zhang.
\newblock Self-similar imploding solutions of the relativistic {E}uler
  equations.
\newblock {\em arXiv preprint arXiv:2405.19674}, 2024.

\bibitem{Staffilani-Wilson:stability-cubic-nls-irrational}
Gigliola Staffilani and Bobby Wilson.
\newblock Stability of the cubic nonlinear {S}chrodinger equation on an
  irrational torus.
\newblock {\em SIAM J. Math. Anal.}, 52(2):1318--1342, 2020.

\bibitem{Struwe:gwp-cauchy-supercritical-nlw-2-dim}
Michael Struwe.
\newblock Global well-posedness of the {C}auchy problem for a super-critical
  nonlinear wave equation in two space dimensions.
\newblock {\em Math. Ann.}, 350(3):707--719, 2011.

\bibitem{Sulem-Sulem-Frisch:tracing-complex-singularities}
Catherine Sulem, Pierre-Louis Sulem, and H\'{e}l\`ene Frisch.
\newblock Tracing complex singularities with spectral methods.
\newblock {\em J. Comput. Phys.}, 50(1):138--161, 1983.

\bibitem{Tao:blowup-defocusing-nlw}
Terence Tao.
\newblock Finite-time blowup for a supercritical defocusing nonlinear wave
  system.
\newblock {\em Anal. PDE}, 9(8):1999--2030, 2016.

\bibitem{Tao:blowup-supercritical-defocusing-nls}
Terence Tao.
\newblock Finite time blowup for a supercritical defocusing nonlinear
  {S}chr\"{o}dinger system.
\newblock {\em Anal. PDE}, 11(2):383--438, 2018.

\bibitem{Tao-Visan-Zhang:gwp-scattering-nls-radial-high-dimensions}
Terence Tao, Monica Visan, and Xiaoyi Zhang.
\newblock Global well-posedness and scattering for the defocusing mass-critical
  nonlinear {S}chr\"{o}dinger equation for radial data in high dimensions.
\newblock {\em Duke Math. J.}, 140(1):165--202, 2007.

\bibitem{Thomann:instabilities-supercritical-schrodinger-manifolds}
Laurent Thomann.
\newblock Instabilities for supercritical {S}chr\"{o}dinger equations in
  analytic manifolds.
\newblock {\em J. Differential Equations}, 245(1):249--280, 2008.

\bibitem{Tsutsumi:weighted-sobolev-spaces-nonlinear-dispersive-wave}
Masayoshi Tsutsumi.
\newblock Weighted {S}obolev spaces and rapidly decreasing solutions of some
  nonlinear dispersive wave equations.
\newblock {\em J. Differential Equations}, 42(2):260--281, 1981.

\bibitem{Visan:defocusing-nls-high-dimensions}
Monica Visan.
\newblock The defocusing energy-critical nonlinear {S}chr\"{o}dinger equation
  in higher dimensions.
\newblock {\em Duke Math. J.}, 138(2):281--374, 2007.

\bibitem{Yu-Yue:gwp-periodic-quintic-nls}
Xueying Yu and Haitian Yue.
\newblock On the global well-posedness for the periodic quintic nonlinear
  {S}chr\"{o}dinger equation.
\newblock {\em SIAM J. Math. Anal.}, 56(2):1851--1902, 2024.

\bibitem{Yue:gwp-focusing-nls-t4}
Haitian Yue.
\newblock Global well-posedness for the energy-critical focusing nonlinear
  {S}chr\"{o}dinger equation on {$\Bbb T^4$}.
\newblock {\em J. Differential Equations}, 280:754--804, 2021.

\end{thebibliography}


\begin{tabular}{l}
  \textbf{Gonzalo Cao-Labora} \\
  {Courant Institute} \\
  {New York University} \\
  {251 Mercer Street, 619} \\
  {New York, NY 10012, USA} \\
  {Email: gc2703@nyu.edu} \\ \\
  \textbf{Javier G\'omez-Serrano}\\
  {Department of Mathematics} \\
  {Brown University} \\
  {314 Kassar House, 151 Thayer St.} \\
  {Providence, RI 02912, USA} \\
  {Email: javier\_gomez\_serrano@brown.edu} \\ \\
  \textbf{Jia Shi} \\
  {Departament of Mathematics} \\
  {Massachusetts Institute of Technology} \\
  {182 Memorial Drive, 2-157} \\
  {Cambridge, MA 02139, USA} \\
  {Email: jiashi@mit.edu} \\ \\
  \textbf{Gigliola Staffilani} \\
  {Departament of Mathematics} \\
  {Massachusetts Institute of Technology} \\
  {182 Memorial Drive, 2-251} \\
  {Cambridge, MA 02139, USA} \\
  {Email: gigliola@math.mit.edu} \\
\end{tabular}

\end{document}